\definecolor{myred}{RGB}{251,154,133}
\definecolor{myblue}{RGB}{153,206,227}
\definecolor{mylightblue}{RGB}{0, 150, 255}
\definecolor{mygreen}{RGB}{32, 210, 64}
\definecolor{mygray}{RGB}{220, 220, 220}
\tikzset{snake it/.style={decorate, decoration=snake}}
\newtheorem{theorem}{Theorem}[section]
\newtheorem{lemma}[theorem]{Lemma}
\newtheorem{conjecture}[theorem]{Conjecture}
\theoremstyle{definition}
\newtheorem{definition}[theorem]{Definition}
\theoremstyle{remark}
\newtheorem{remark}[theorem]{Remark}
\theoremstyle{example}
\DeclareFontFamily{OML}{rsfs}{\skewchar\font'177}
\DeclareFontShape{OML}{rsfs}{m}{n}{ <5> <6> rsfs5 <7> <8> <9>
rsfs7 <10> <10.95> <12> <14.4> <17.28> <20.74> <24.88> rsfs10 }{}
\DeclareMathAlphabet{\mathfs}{OML}{rsfs}{m}{n}
\newcommand{\BZ}{{\mathbb{Z}}}
\newcommand{\CP}{{\mathcal{P}}}
\newcommand{\ind}{{\mathbbm{1}}}
\newcommand{\bae}{\begin{equation}\begin{aligned}}
\newcommand{\eae}{\end{aligned}\end{equation}}
\newtheorem{Proposition}[theorem]{Proposition}
\def\beq{ \begin{equation} }
\def\eeq{ \end{equation} }
\def\square{\vcenter{\vbox{\hrule height .4pt
  \hbox{\vrule width .4pt height 5pt \kern 5pt
        \vrule width .4pt} \hrule height .4pt}}}
\def\RR{\mathbb{R}}
\def\ZZ{\mathbb{Z}}
\def\var{\hbox{var}\,}
\begin{document}

\numberwithin{equation}{section} 

\title{On Covering Monotonic Paths with Simple Random Walk}

\author{Eviatar B. Procaccia}
\address[Eviatar B. Procaccia\footnote{Research supported by NSF grant DMS-1407558}]{Texas A\&M University}
\urladdr{www.math.tamu.edu/~procaccia}
\email{eviatarp@gmail.com}
 
\author{Yuan Zhang}
\address[Yuan Zhang]{Texas A\&M University}
\urladdr{http://www.math.tamu.edu/~yzhang1988/}
\email{yzhang1988@math.tamu.edu}


\maketitle
%
%
%

\begin{abstract}
In this paper we study the probability that a $d$ dimensional simple random walk (or the first $L$ steps of it) covers each point in a nearest neighbor path connecting 0 and the boundary of an $L_1$ ball. We show that among all such paths, the one that maximizes the covering probability is the monotonic increasing  one that stays within distance 1 from the diagonal. As a result, we can obtain an exponential upper bound on the decaying rate of covering probability of any such path when $d\ge 4$. 
\end{abstract}

\tableofcontents
\section{Introduction}
In this paper, we study the probability that a finite subset, especially the trace of a nearest neighbor path in $\ZZ^d$ is completely covered by the trace of a  $d$ dimensional simple random walk. 

\

For any finite subset $A\subset \ZZ^d$ and a $d$ dimensional simple random walk $\{X_n\}_{n=0}^\infty$ starting at 0, we say that $A$ is completely covered by the first $L$ steps of the random walk if 
$$
A\subseteq {\rm Trace}(X_0,X_1,\cdots, X_L):=\{x\in\BZ^d:\exists 0\le i\le L, X_i=x\}.
$$
For simplicity we state our first result for $d=2$. For integer $l_0\ge 0$ and the subspace of reflection $l: x=y+l_0$, we can define $\varphi_l: \ZZ^2\to\ZZ^2$ as the reflection mapping around $l$. I.e., for any $(x,y)\in \ZZ^2$,
$$
\varphi_l(x,y)=(l_0+y,x-l_0).
$$
Suppose two disjoint finite sets $A_0,B_0\subset \ZZ^2\cap \{x,y: x\le y+l_0\}$ both stay on the left of $l$. We then have the following theorem which states that the covering probability cannot get larger when we reflect one of them to the other side of the line:
\begin{theorem}
\label{theorem 2D random walk reflection}
For any integer $L\ge 0$,
$$
P\left(A_0\cup B_0\subseteq{\rm Trace}\big(\{X_n\}_{n=0}^L\big) \right)\ge P\left(A_0\cup \varphi_l(B_0)\subseteq{\rm Trace}\big(\{X_n\}_{n=0}^L\big) \right).
$$
\end{theorem}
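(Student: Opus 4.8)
The plan is to pass to coordinates adapted to $l$, to condition out the part of the randomness that is symmetric under $\varphi_l$, and thereby reduce the inequality to a correlation statement for ``transversal'' events driven by i.i.d.\ fair coins; that statement is then settled by the Harris inequality together with the complementation symmetry of the coin field.

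\emph{Adapted coordinates and conditioning.} Write $X_n=(\xi_n,\eta_n)$ and set $Y_n=\xi_n-\eta_n-l_0$, $Z_n=\xi_n+\eta_n$. Inspecting the four possible increments shows that $Y$ and $Z$ are \emph{independent} simple random walks on $\ZZ$ with $Y_0=-l_0\le 0$ and $Z_0=0$, that $X_n\mapsto(Y_n,Z_n)$ is a bijection onto a sublattice of $\ZZ^2$, that $l=\{Y=0\}$ and $\{x\le y+l_0\}=\{Y\le 0\}$, and that $\varphi_l$ acts as $(Y,Z)\mapsto(-Y,Z)$; in particular $\varphi_l$ fixes $l$ pointwise, so $\varphi_l(B_0)\cap l=B_0\cap l$. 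Let $\tau=\inf\{n:Y_n=0\}$. Before time $\tau$ the walk $Y$ is strictly negative (it is a $\pm1$ walk; if $l_0=0$ then $\tau=0$), and on $[\tau,L]$ it decomposes into successive excursions away from $0$ (the last possibly not completed by time $L$), each lying entirely in $\{Y>0\}$ or entirely in $\{Y<0\}$. Condition on $\CG:=\sigma\big((|Y_n|,Z_n):0\le n\le L\big)$. By the classical excursion decomposition of one-dimensional simple random walk, and since $Z$ is independent of $Y$, conditionally on $\CG$ the signs $\epsilon_1,\dots,\epsilon_M$ of these $[\tau,L]$-excursions are i.i.d.\ fair $\pm1$; moreover ${\rm Trace}(X_{[0,L]})$ is a deterministic function of $\CG$ and of $N:=\{i:\epsilon_i=-1\}$, which is uniformly distributed on the subsets of $\{1,\dots,M\}$.

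\emph{The conditional covering events.} Fix a realization of $\CG$ and put $C_0={\rm Trace}(X_{[0,\tau]})\subseteq\{Y\le 0\}$. A point on $l$ has $Y=0$, so whether it lies in ${\rm Trace}(X_{[0,L]})$ depends on $\CG$ only; if some point of $(A_0\cup B_0)\cap l$ (equivalently, of $(A_0\cup\varphi_l(B_0))\cap l$) is not visited, both conditional probabilities vanish, so assume this does not occur. For $u$ with $Y(u)<0$ let $I(u)$ be the set of $[\tau,L]$-excursions whose reflection into $\{Y<0\}$ passes through $u$. Then a point $u\in(A_0\cup B_0)\cap\{Y<0\}$ is covered iff $u\in C_0$ or $N\cap I(u)\neq\varnothing$, whereas $\varphi_l(b)\in\{Y>0\}$ (for $b\in B_0\cap\{Y<0\}$) is covered iff some excursion of $I(b)$ has sign $+$, i.e.\ $I(b)\not\subseteq N$ — and here $C_0$ is useless since $C_0\subseteq\{Y\le 0\}$. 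Setting $\CA=(A_0\cap\{Y<0\})\setminus C_0$, $\CB=(B_0\cap\{Y<0\})\setminus C_0$ and $E=\bigcap_{a\in\CA}\{N\cap I(a)\neq\varnothing\}$, we get
\[
\{A_0\cup B_0\subseteq{\rm Trace}(X_{[0,L]})\}=E\cap\bigcap_{b\in\CB}\{N\cap I(b)\neq\varnothing\},\qquad \{A_0\cup\varphi_l(B_0)\subseteq{\rm Trace}(X_{[0,L]})\}\subseteq E\cap\bigcap_{b\in\CB}\{I(b)\not\subseteq N\},
\]
the inclusion holding because $\varphi_l(B_0)$-coverage forces $I(b)\not\subseteq N$ for every $b\in B_0\cap\{Y<0\}\supseteq\CB$.

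\emph{The correlation inequality.} It thus suffices to show that for $N$ uniform on the subsets of a finite set and arbitrary finite families $\{I(a)\}_{a\in\CA},\{I(b)\}_{b\in\CB}$ of subsets,
\[
P\Big(E\cap\bigcap_{b\in\CB}\{I(b)\not\subseteq N\}\Big)\ \le\ P\Big(E\cap\bigcap_{b\in\CB}\{N\cap I(b)\neq\varnothing\}\Big).
\]
Write $F=\bigcap_{b}\{I(b)\not\subseteq N\}$ and $F'=\bigcap_{b}\{N\cap I(b)\neq\varnothing\}$. Viewing $N$ through its indicator bits $\big(\mathbf 1[i\in N]\big)_i$, which are i.i.d.\ fair, the events $E$ and $F'$ are increasing while $F$ is decreasing; hence $P(E\cap F')\ge P(E)P(F')$ by the Harris inequality, and $P(E\cap F)\le P(E)P(F)$ since $E$ is increasing and $F$ decreasing. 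Finally the measure-preserving involution $N\mapsto N^{c}$ sends $\{N\cap I(b)\neq\varnothing\}$ to $\{I(b)\not\subseteq N\}$, hence $F'$ to $F$, so $P(F)=P(F')$. Combining, $P(E\cap F)\le P(E)P(F)=P(E)P(F')\le P(E\cap F')$; taking expectations over $\CG$ finishes the proof. The proof hinges on two points: that in the $(Y,Z)$-coordinates $\varphi_l$ acts solely by flipping the i.i.d.\ excursion signs, turning the conditional covering events into purely combinatorial ones, and the identity $P(F)=P(F')$, which trades the decreasing constraint coming from the reflected set for an equivalent increasing one and is precisely what makes the inequality point the right way. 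The step requiring care is the bookkeeping of the second paragraph — separating diagonal points, the pre-$\tau$ trace $C_0$, and incomplete final excursions — and in particular the asymmetry that $C_0$ may already cover a point of $B_0$ but never a point of $\varphi_l(B_0)$.
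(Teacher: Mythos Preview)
Your proof is correct and takes a genuinely different route from the paper's argument, though the two share the same underlying reduction.

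Both arguments first partition the probability space according to the same data: the paper defines equivalence classes of paths under reflection of ``arcs'' (the segments between consecutive visits to $l$), while you condition on $\CG=\sigma((|Y_n|,Z_n))$. These are the same thing --- fixing $(|Y|,Z)$ is exactly fixing the pre-$\tau$ segment together with the shapes of all excursions, which is precisely the paper's equivalence-class data, and the paper's ``configurations'' $\vec D\in\{-1,1\}^m$ are your excursion signs $(\epsilon_i)$.

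The real divergence is in how the conditional inequality is established. The paper proves a bespoke combinatorial lemma (their Lemma~2.1: $|\CC(\vec V,\Omega)|\ge|\CC(\vec V,A)|$) by a double induction on the number of arcs and the size of $\Omega$, with a somewhat intricate bookkeeping of the last arc. You instead observe that, in the i.i.d.\ fair-coin model for $N$, the $A_0$-constraints and the unreflected $B_0$-constraints are both increasing in $N$, the reflected $\varphi_l(B_0)$-constraints are decreasing, and the involution $N\mapsto N^c$ exchanges the two $B_0$-type events while preserving the measure. Harris then closes the gap in two lines. In effect you have given a correlation-inequality proof of the paper's Lemma~2.1.

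What each approach buys: your argument is shorter, uses only standard tools (excursion signs are i.i.d.; FKG on a product of fair coins), and makes transparent \emph{why} the inequality holds --- the asymmetry between ``some excursion negative through $u$'' and ``some excursion positive through $u$'' is pure monotonicity. The paper's inductive lemma is more elementary (no correlation inequalities) and is stated in a self-contained combinatorial form that they then reuse verbatim for the $d$-dimensional hyperplane reflections in their Theorem~1.4; your approach extends there too (set $Y_n=X_n^{(1)}-X_n^{(2)}-l_0$, condition on $|Y|$ and on the remaining coordinates; $Y$ is now a lazy $\pm1$ walk but the excursion-sign symmetry persists), though you would need to say a word about that.

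Two minor points you could make tighter: the case $\tau>L$ (no visit to $l$ by time $L$) deserves one explicit sentence --- with $M=0$ the RHS event is empty unless $B_0\subseteq l$, so the inequality is trivial --- and your $C_0:={\rm Trace}(X_{[0,\tau]})$ should strictly be ${\rm Trace}(X_{[0,\tau\wedge L]})$ to cover that case. Neither affects the validity of the argument.
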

\begin{remark}
By taking the union over all the $L$'s, one can immediately see the theorem also holds for $L=\infty$. \end{remark}

\begin{remark}
One would think (like the authors first did) that Theorem \ref{theorem 2D random walk reflection} should follow from repeated use of the reflection principle. Two problems arise when one explores this idea. The first is that reflecting a path does not conserve the hitting order within the sets, which makes it hard to determine the times of reflection. The second is that even if we consider the sets before and after reflection with the same hitting order we can get a contradiction to the monotonicity of cover probabilities with the specified order. See Figure \ref{fig:notmono} for an example. Here after hitting the reflected first vertex the path covers, the vertices we would like to hit in order 3, 4 and 5 block the way to the vertex we would like to visit second.
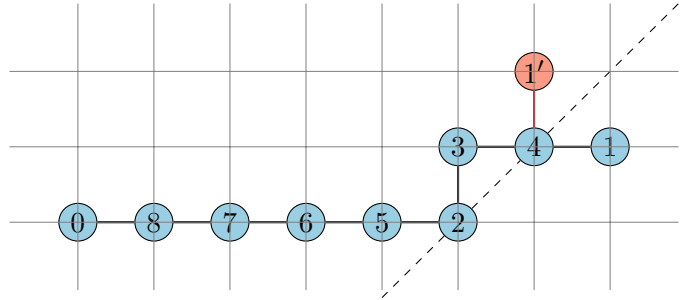
\begin{figure}[h!]
\centering
\begin{tikzpicture}
\tikzstyle{redcirc}=[circle,
draw=black,fill=myred,thin,inner sep=0pt,minimum size=5mm]
\tikzstyle{bluecirc}=[circle,
draw=black,fill=myblue,thin,inner sep=0pt,minimum size=5mm]

\node (v1) at (0,0) [bluecirc] {\small{$0$}};
\node (v2) at (1,0) [bluecirc] {\small{$8$}};
\node (v3) at (2,0) [bluecirc] {\small{$7$}};
\node (v4) at (3,0) [bluecirc] {\small{$6$}};
\node (v5) at (4,0) [bluecirc]{\small{$5$}};
\node (v6) at (5,0) [bluecirc] {\small{$2$}};
\node (v7) at (5,1) [bluecirc] {\small{$3$}};
\node (v8) at (6,1) [bluecirc] {\small{$4$}};
\node (v9) at (7,1) [bluecirc] {\small{$1$}};
\node (v10) at (6,2) [redcirc] {\small{$1'$}};

\draw [thick] (v1) to (v2);
\draw [thick] (v2) to (v3);
\draw [thick] (v3) to (v4);
\draw [thick] (v4) to (v5);
\draw [thick] (v5) to (v6);
\draw [thick] (v6) to (v7);
\draw [thick] (v7) to (v8);
\draw [thick] (v8) to (v9);
\draw [red,thick] (v8) to (v10);
\draw [dashed] (v6) to (v8);
\draw [dashed] (v6) to (4,-1);
\draw [dashed] (v8) to (7.9,2.9);


\draw[step=1cm,gray,very thin] (-0.9,-0.9) grid (7.9,2.9);
\end{tikzpicture}
\caption{A counter example to monotonicity for every oder.}
\label{fig:notmono}
\end{figure} 
\end{remark}

With Theorem \ref{theorem 2D random walk reflection}, we could consider the problem of covering a nearest neighbor path in $\ZZ^d$. For any integer $N\ge 1$, let $\partial B_1(0,N)$ be the boundary of the $L_1$ ball in $\ZZ^d$ with radius $N$. We say that a nearest neighbor path
$$
\mathcal{P}=\big(P_0,P_1,\cdots, P_K\big)
$$
connects 0 and $\partial B_1(0,N)$ if $P_0=0$ and $\inf\{n: \|P_n\|_1= N\}=K$. And we say that a path $\mathcal{P}$ is covered by the first $L$ steps of $\{X_n\}_{n=0}^\infty$ if
$$
{\rm Trace}(\mathcal{P})\subseteq {\rm Trace}(X_0,X_1,\cdots, X_L).
$$
Then we are able to use Theorem \ref{theorem 2D random walk reflection} to show that the covering probability of any such path can be bounded by that of the diagonal. 
\begin{theorem}
\label{theorem D random walk}
For each integers $L\ge N\ge 1$, let $\mathcal{P}$ be any nearest neighbor path in $\ZZ^d$ connecting 0 and $\partial B_1(0,N)$. $X_n, n\ge 0$ be a $d$ dimensional simple random walk starting at 0. Then
$$
P\big({\rm Trace}(\mathcal{P})\in {\rm Trace}(X_0,\cdots, X_L)\big)\le P\big(\overset{\nearrow}{\mathcal{P}}\in {\rm Trace}(X_0,\cdots, X_L)\big)
$$
where 
$$
\overset{\nearrow}{\mathcal{P}}=\Big({\rm arc}_1[0:d-1],{\rm arc}_2[0:d-1],\cdots, {\rm arc}_{[N/d]}[0:d-1], {\rm arc}_{[N/d]+1}[0:N-d[N/d]] \Big)
$$
where
$$
{\rm arc}_1[0:d-1]=\left(0, e_1, e_1+e_2,\cdots, \sum_{i=1}^{d-1}e_i\right) 
$$
and ${\rm arc}_k=(k-1)\sum_{i=1}^{d}e_i+{\rm arc}_1$. 
\end{theorem}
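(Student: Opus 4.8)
The plan is to deduce Theorem~\ref{theorem D random walk} from Theorem~\ref{theorem 2D random walk reflection} by folding the set $T_{0}:={\rm Trace}(\mathcal{P})$ onto ${\rm Trace}\big(\overset{\nearrow}{\mathcal{P}}\big)$ through a finite sequence of reflections, none of which decreases the covering probability. Throughout, ``$A\subseteq{\rm Trace}$'' abbreviates ``$A\subseteq{\rm Trace}(X_{0},\dots,X_{L})$'', and for $0\le k\le N$ let $s_{k}$ be the unique point of $\partial B_{1}(0,k)$ all of whose coordinates are nonnegative, nonincreasing, and differ pairwise by at most $1$; inspecting the arcs shows ${\rm Trace}\big(\overset{\nearrow}{\mathcal{P}}\big)=\{s_{0},\dots,s_{N}\}$. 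We use reflections of three kinds: (i) the sign-flips $x_{i}\mapsto -x_{i}$ (reflection across $\{x_{i}=0\}$), folding toward $\{x_{i}\ge 0\}$; (ii) the transpositions $x_{i}\leftrightarrow x_{j}$ for $i<j$ (reflection across $\{x_{i}=x_{j}\}$), folding toward $\{x_{i}\ge x_{j}\}$; and (iii) the reflection across $\{x_{1}=x_{d}+1\}$ acting on the pair $(x_{1},x_{d})$ by $(a,b)\mapsto(b+1,a-1)$, folding toward $\{x_{1}\le x_{d}+1\}$. Each is a reflection symmetry of $\ZZ^{d}$ preserving the law of $\{X_{n}\}$, with $0$ in the closure of the half-space one folds toward; Theorem~\ref{theorem 2D random walk reflection} covers cases (ii)--(iii) for $d=2$ (with $l_{0}=0$ and $l_{0}=1$), and the remaining cases — the sign-flips of (i), and for $d\ge 3$ the embedded $2$-coordinate reflections of (ii)--(iii) — are established by the same coupling argument, the surplus coordinates being passive.

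\emph{One reflection step.} Let $H$ be such a hyperplane, $H^{+}$ the closed half-space folded toward, and $\varphi_{H}$ the reflection. For a finite $T\subset\ZZ^{d}$ set $R:=T\setminus H^{+}$, $L:=T\cap H^{+}$, and $T':=L\cup\varphi_{H}(R)\subset H^{+}$. Applying Theorem~\ref{theorem 2D random walk reflection} with $A_{0}:=L$ and $B_{0}:=\varphi_{H}(R)\setminus L$ (disjoint subsets of $H^{+}$ with union $T'$) gives $P(T'\subseteq{\rm Trace})\ge P\big(L\cup(R\setminus\varphi_{H}(L))\subseteq{\rm Trace}\big)$, and since $L\cup(R\setminus\varphi_{H}(L))\subseteq L\cup R=T$ while covering a larger set is harder, the right-hand side is $\ge P(T\subseteq{\rm Trace})$. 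Hence $P(T\subseteq{\rm Trace})\le P(T'\subseteq{\rm Trace})$.

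\emph{Iteration and termination.} Starting from $T_{0}$, repeatedly perform a step, preferring at each stage a kind-(i) reflection that changes the set, then a kind-(ii) one, then the kind-(iii) one, and stop when none changes the set. To see finiteness, fix $\delta\in(0,\tfrac{2}{d-1})$, put $f(x):=\sum_{m}x_{m}^{2}-\delta\sum_{m}(d+1-m)x_{m}$, and let $\Phi(T):=C\,|T|+\sum_{x\in T}f(x)$ for a large constant $C$. A one-line calculation shows each kind strictly decreases $f$ on every point it moves (the kind-(iii) case uses $\delta(d-1)<2$), so absent coincidences $\sum_{x\in T}f(x)$ strictly decreases, while any coincidence in $\varphi_{H}(R)\cap L$ decreases $|T|$ and hence, $C$ being large, decreases $\Phi$; thus $\Phi$ strictly decreases at every step. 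Moreover kinds (i) and (ii) preserve each point's $\ell^{1}$-norm, and — crucially — a kind-(iii) reflection is used only when no kind-(i) reflection applies, i.e. when all points already lie in the nonnegative orthant, so it acts on points with $x_{1}\ge x_{d}+2\ge 2$ and therefore also preserves $\ell^{1}$-norms (and keeps the points in the orthant). Hence every point stays on its original $L_{1}$-sphere, $T$ stays inside the finite set $B_{1}(0,N)$, and since $\Phi$ takes only finitely many values and strictly decreases, the process ends after finitely many steps at some $T_{\infty}$ with $P(T_{0}\subseteq{\rm Trace})\le P(T_{\infty}\subseteq{\rm Trace})$.

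\emph{Identifying $T_{\infty}$, and the main difficulty.} When no step applies, every $x\in T_{\infty}$ has nonnegative nonincreasing coordinates (else kind (i) or (ii) would apply) with $x_{1}\le x_{d}+1$ (else kind (iii) would apply), which for an integer point forces $x=s_{\|x\|_{1}}$, and $\|x\|_{1}\le N$ since $T_{0}\subset B_{1}(0,N)$ and $L_{1}$-norms were preserved. Conversely $\mathcal{P}$ runs from $\|\cdot\|_{1}=0$ to $\|\cdot\|_{1}=N$ by $\pm 1$ steps, so $T_{0}$, hence $T_{\infty}$, meets $\partial B_{1}(0,k)$ for every $0\le k\le N$. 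Therefore $T_{\infty}=\{s_{0},\dots,s_{N}\}={\rm Trace}\big(\overset{\nearrow}{\mathcal{P}}\big)$, and chaining the per-step inequalities yields the theorem. The delicate point, which I would treat most carefully, is arranging the three reflection families to be simultaneously (a) legitimate instances of Theorem~\ref{theorem 2D random walk reflection}, (b) $\ell^{1}$-norm preserving on the points they move, and (c) strictly $\Phi$-decreasing: it is precisely the schedule ``sign-flips before transpositions before $\{x_{1}=x_{d}+1\}$'' that secures (b) whenever (c) is invoked. A reader who prefers not to quote the one-coordinate and $\ZZ^{d}$ variants of Theorem~\ref{theorem 2D random walk reflection} can instead first record a single ``reflection across any walk-invariant hyperplane with $0$ on the near side'' version, proved verbatim as Theorem~\ref{theorem 2D random walk reflection}.
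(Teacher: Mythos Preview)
Your proof is correct and takes a genuinely different route from the paper's. Both iterate the reflection inequality of Theorem~\ref{theorem 2D random walk reflection} (after extending it to $\ZZ^d$), and your ``one reflection step'' is exactly the paper's device of passing from $A_0\cup B_0'$ to $A_0\cup B_0$ via $\hat B_0'$. The differences lie in the bookkeeping. The paper keeps an actual nearest-neighbor path at every stage: in a first phase it reflects over hyperplanes $a_i=a_j+1$, using the convexity-based potential $D_T(\mathcal{P})=\sum_n\sum_{i,j}|p_{n,i}-p_{n,j}|$, until the path lies in the tube $R=\{\max_{i,j}|a_i-a_j|\le 1\}$; in a second phase it sorts the coordinates by a fixed schedule of transpositions over $a_i=a_j$, and then uses that the sorted tube is one-dimensional so that the resulting path must traverse every $s_k$. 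You instead work with sets throughout, fold into the nonnegative orthant with coordinate sign flips (a reflection family the paper never invokes), and drive termination with a single quadratic-plus-linear potential $\Phi$; your identification of the limit set relies on the preservation of each point's $\ell^1$-norm rather than on path connectivity. The trade-off: the paper stays strictly within the hyperplane family $a_i=a_j+l_0$ for which Theorem~\ref{theorem 2D random walk reflection} is literally stated, while you need (as you correctly note) its routine extension to sign-flip hyperplanes; in return you avoid the two-phase structure and the somewhat delicate tracking of a path through successive equivalence-class representatives.
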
 

The following main theorem gives an upper bound of the covering probability over all nearest neighbor paths connecting 0 and $\partial B_1(0,N)$.

\begin{theorem}
\label{Theorem Main}
There is $P_d\in (0,1)$ such that for any nearest neighbor path $\mathcal{P}=(P_0,P_1,\cdots, P_K)$ connecting 0 and $\partial B_1(0,N)$ and $\{X_n\}_{n=0}^\infty$ which is a $d$ dimensional simple random walk starting at 0 with $d\ge 4$, we always have 
$$
P\left({\rm Trace}(\mathcal{P})\subseteq {\rm Trace}\big(\{X_n\}_{n=0}^\infty\big) \right)\le P_d^{[N/d]}.  
$$
Here $P_d$ equals to the probability that $\{X_n\}_{n=0}^\infty$ ever returns to the $d$ dimensional diagonal line.  
\end{theorem}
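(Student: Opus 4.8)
The plan is to deduce the statement from Theorem~\ref{theorem D random walk} together with a short estimate for how often a simple random walk returns to the diagonal line. First I would pass from finite to infinite time in Theorem~\ref{theorem D random walk}: for any finite $A\subset\ZZ^d$ the probability $P\big(A\subseteq{\rm Trace}(X_0,\dots,X_L)\big)$ is nondecreasing in $L$ and increases to $P\big(A\subseteq{\rm Trace}(\{X_n\}_{n=0}^\infty)\big)$, so letting $L\to\infty$ reduces the claim to the single path $\overset{\nearrow}{\mathcal P}$. Write $v=\sum_{i=1}^d e_i=(1,\dots,1)$ and $m=[N/d]$. From ${\rm arc}_k=(k-1)v+{\rm arc}_1$ and the fact that ${\rm arc}_1$ begins at the origin, the initial vertex of ${\rm arc}_k$ is $(k-1)v$; letting $k$ range over $1,\dots,m+1$ shows that $\overset{\nearrow}{\mathcal P}$ passes through every point of $\{jv:0\le j\le m\}$. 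Hence
\[
P\big({\rm Trace}(\overset{\nearrow}{\mathcal P})\subseteq{\rm Trace}(\{X_n\}_{n=0}^\infty)\big)\le P\big(\{jv:0\le j\le m\}\subseteq{\rm Trace}(\{X_n\}_{n=0}^\infty)\big),
\]
and it remains to bound the right-hand side by $P_d^{[N/d]}$.

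Let $D=\ZZ v=\ZZ^d\cap\{tv:t\in\RR\}$ be the lattice points on the diagonal line, let $0=\tau_0<\tau_1<\tau_2<\cdots$ enumerate the times $n\ge0$ with $X_n\in D$, and let $R\in\{0,1,\dots,\infty\}$ be the number of those times that are positive. The set of points of $D$ visited by the walk is $\{X_{\tau_j}\}$, which has at most $R+1$ elements, so covering the $m+1$ distinct points $0,v,\dots,mv$ forces $R\ge m$; thus the right-hand side above is at most $P(R\ge m)$. Since $D=\ZZ v$ is a subgroup of $\ZZ^d$, it is invariant under translation by any $jv$, so by the strong Markov property at $\tau_j$, on the event $\{\tau_j<\infty\}$ the walk hits $D$ again at some later time with conditional probability exactly $P_d:=P_0(\exists\,n\ge1:X_n\in D)$, independent of $\mathcal F_{\tau_j}$. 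Iterating gives $P(R\ge m)=P_d^{\,m}=P_d^{[N/d]}$, which is the bound we want.

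Finally I would check that $P_d\in(0,1)$ when $d\ge4$. Positivity is clear, since the walk reaches $v\in D$ in $d$ steps with probability at least $(2d)^{-d}>0$. For $P_d<1$ it suffices that the expected number of returns be finite, and
\[
\e R=\sum_{k\in\ZZ}G(0,kv)-1,
\]
where $G$ is the Green's function of the walk; by the standard estimate $G(0,x)\asymp\|x\|_1^{\,2-d}$ for $d\ge3$ together with $\|kv\|_1=d|k|$, this series is comparable to $\sum_{k\ge1}k^{\,2-d}$, which converges precisely when $d\ge4$. Since $P(R\ge m)=P_d^m$ already gives $\e R=P_d/(1-P_d)$ when $P_d<1$ and $\e R=\infty$ when $P_d=1$, finiteness of $\e R$ forces $P_d<1$. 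Combining this with the two displays above proves the theorem.

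Given Theorem~\ref{theorem D random walk}, this is a short argument and I do not anticipate a genuine obstacle; the only steps that warrant a line of justification are the passage $L\to\infty$ (monotone convergence) and the use of translation invariance of $D$ in deriving $P(R\ge m)=P_d^m$. The entire dependence on dimension sits in the summability $\sum_k k^{2-d}<\infty\iff d\ge4$ --- equivalently, the transience of the diagonal line for simple random walk --- which is exactly why the hypothesis $d\ge4$ enters.
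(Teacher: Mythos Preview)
Your argument is correct and follows essentially the same route as the paper: reduce via Theorem~\ref{theorem D random walk} (passed to $L=\infty$) to covering $\overset{\nearrow}{\mathcal P}$, drop to the diagonal points $\{jv:0\le j\le [N/d]\}$, and bound by the probability of $[N/d]$ returns to the diagonal line via the strong Markov property. The only cosmetic difference is the verification that $P_d<1$: the paper projects onto the $(d-1)$-dimensional walk $Y_n=(X_n^1-X_n^2,\dots,X_n^{d-1}-X_n^d)$ and invokes transience of that walk for $d-1\ge 3$, whereas you sum the Green's function along $\ZZ v$ using $G(0,x)\asymp\|x\|_1^{2-d}$; both are standard and equivalent here.
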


Note that in Theorem  \ref{Theorem Main} the upper bound is not very sharp since we only look at returning to the exact diagonal line for $[N/d]$ times, which may cover at most $1/d$ of the total points in $\overset{\nearrow}{\mathcal{P}}$. Although for any fixed $d$, we still have an exponential decay with respect to $N$, when $d\to\infty$, such exponential decaying speed, which is lower bounded by $\left(\frac{1}{2d}\right)^{1/d}$, goes to one. Fortunately, in Appendix \ref{sec:appb} we are able to show that $\lim_{d\rightarrow\infty} 2dP_d=1$, and then further find an upper bound on the asymptotic of the probability that a $d$ dimensional simple random walk starting from some point in Trace$\big(\overset{\nearrow}{\mathcal{P}}\big)$ will ever return to Trace$\big(\overset{\nearrow}{\mathcal{P}}\big)$. Note that we now need to return at least $N$ time to cover all the points in $\overset{\nearrow}{\mathcal{P}}$. We state this result as an additional theorem which is stronger than Theorem \ref{Theorem Main} since the proof is much more elaborate and for many uses Theorem \ref{Theorem Main} is sufficient.

\begin{theorem}
\label{Theorem sharp}
There is a $C\in (0,\infty)$ such that for any $d\ge 4$ and any nearest neighbor path $\mathcal{P}=(P_0,P_1,\cdots, P_K)\subset \ZZ^d$ connecting 0 and $\partial B_1(0,N)$ and $\{X_n\}_{n=0}^\infty$ which is a $d$ dimensional simple random walk starting at 0, we always have 
$$
P\left({\rm Trace}(\mathcal{P})\subseteq {\rm Trace}\big(\{X_n\}_{n=0}^\infty\big) \right)\le \left(\frac{C}{d}\right)^N.  
$$
\end{theorem}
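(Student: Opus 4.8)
\medskip\noindent\textbf{Proof proposal.}

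The plan is to deduce Theorem~\ref{Theorem sharp} from Theorem~\ref{theorem D random walk} together with one quantitative fact about simple random walk in dimension $\ge 4$: started from a point of the diagonal staircase, it has probability only $O(1/d)$ of ever meeting another point of the staircase. Given that, the theorem is one short induction, and essentially all of the difficulty is in the $O(1/d)$ bound, uniformly in $d\ge 4$ and in the length of the path.

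\emph{Reduction and setup.} By Theorem~\ref{theorem D random walk}, and letting $L\uparrow\infty$ as in the remark following Theorem~\ref{theorem 2D random walk reflection}, it suffices to bound $P\big({\rm Trace}(\overset{\nearrow}{\mathcal P})\subseteq{\rm Trace}(\{X_n\}_{n=0}^\infty)\big)$. Write $A={\rm Trace}(\overset{\nearrow}{\mathcal P})=\{Q_0,\dots,Q_N\}$ with $Q_0=0$. From the definition of $\overset{\nearrow}{\mathcal P}$ the successive increments $Q_j-Q_{j-1}$ form the periodic sequence $e_1,e_2,\dots,e_d,e_1,\dots$; in particular the path is coordinatewise nondecreasing, so $\|Q_i-Q_j\|_1=|i-j|$. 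Consequences: $|A|=N+1$; the only neighbours of $Q_r$ in $A$ are $Q_{r\pm1}$; and for $j\ne r$ the vector $Q_j-Q_r$ is, up to a coordinate permutation, a sum of $|j-r|$ cyclically consecutive unit vectors, so $G(Q_r,Q_j)=G(0,Q_j-Q_r)$ depends only on $m:=|j-r|$ and $d$ — denote it $g_d(m)$.

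\emph{Key estimate and the induction.} The input I would establish (this is, I believe, the role of Appendix~\ref{sec:appb}) is that there is a universal $C_0<\infty$ with $d\sum_{m\ge1}g_d(m)\le C_0$ for every $d\ge4$. Since $P_{Q_r}(\exists n\ge1:\,X_n\in A\setminus\{Q_r\})\le\sum_{j\ne r}P_{Q_r}(\text{hit }Q_j)\le\sum_{j\ne r}G(Q_r,Q_j)\le 2\sum_{m\ge1}g_d(m)$ (using $G(Q_j,Q_j)\ge1$ and that each $m$ gives at most two indices $j$), this yields $P_{Q_r}(\exists n\ge1:\,X_n\in A\setminus\{Q_r\})\le 2C_0/d$ for every $d\ge4$, every $N$ and every $0\le r\le N$. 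Now set $T_0=0$ and $T_k=\inf\{n>T_{k-1}:X_n\in A\setminus\{X_{T_0},\dots,X_{T_{k-1}}\}\}$, the $k$-th time a new point of $A$ is discovered; these are stopping times, and since $|A|=N+1$ we have $\{{\rm Trace}(\overset{\nearrow}{\mathcal P})\subseteq{\rm Trace}(X)\}=\{T_N<\infty\}$. On $\{T_{k-1}<\infty\}$ the walk sits at $Q_r:=X_{T_{k-1}}\in A$, and the discovered set $D_{k-1}=\{X_{T_0},\dots,X_{T_{k-1}}\}$ contains $Q_r$, so $A\setminus D_{k-1}\subseteq A\setminus\{Q_r\}$; hence by the strong Markov property and the key estimate $P(T_k<\infty\mid\CF_{T_{k-1}})\le P_{Q_r}(\exists n\ge1:\,X_n\in A\setminus\{Q_r\})\le 2C_0/d$. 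Therefore $P(T_k<\infty)\le(2C_0/d)\,P(T_{k-1}<\infty)$, and iterating from $P(T_0<\infty)=1$ gives $P(T_N<\infty)\le(2C_0/d)^N$, i.e.\ Theorem~\ref{Theorem sharp} with $C=2C_0$.

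\emph{The main obstacle.} All the work is in the bound $d\sum_{m\ge1}g_d(m)\le C_0$, and in particular in getting the $1/d$ rate \emph{uniformly} over $d\ge4$; the combinatorial part above is trivial once this is in hand. I would split the series by $m$. The term $m=1$ is $g_d(1)=G(0,e_1)=G_d(0,0)-1=\tfrac1{2d}+O(d^{-2})$, using $P_0(\text{hit }e_1)=P_0(\text{return to }0)=1-1/G_d(0,0)$ and $G_d(0,0)=1+\tfrac1{2d}+O(d^{-2})$. For $2\le m\lesssim\sqrt d$ one counts walks of length $m+2k$ from $0$ to a sum of $m$ distinct unit vectors to get $g_d(m)\le C\,m!/(2d)^m$, and $\sum_m m!/(2d)^m$ is a geometric-type series of ratio $\le m/(2d)$, contributing $O(1/d)$. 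The tail $m\gtrsim\sqrt d$ contributes $o(1/d)$: here one uses the heat-kernel bound $p_n(0,x)\lesssim(d/n)^{d/2}$ for the moderate $m$ and the off-diagonal bound $G_d(0,x)\lesssim_d|x|^{2-d}$ for large $m$, and the sum over large $m$ converges precisely because $d-2>1$. This last point is exactly where $d\ge4$ is forced: for $d=3$ the projection of the walk onto the two directions transverse to the diagonal is a recurrent planar walk, so the (infinite) staircase is recurrent and $P_{Q_r}(\exists n\ge1:X_n\in A\setminus\{Q_r\})$ cannot be bounded away from $1$ uniformly in $N$. The remaining, more routine, issues are tracking the various $d$-dependent constants (heat-kernel, Green's function, error terms in the $G_d(0,0)$ expansion) so that the three regimes combine into a single $d$-free $C_0$; uniformity in $N$ and in the base point $Q_r$ is automatic once everything is bounded by the $N$- and $r$-free series $2\sum_{m\ge1}g_d(m)$.
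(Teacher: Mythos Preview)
Your reduction and induction are exactly right and match the paper: pass to $\overset{\nearrow}{\mathcal P}$ via Theorem~\ref{theorem D random walk}, define stopping times marking successive visits to the staircase, and iterate a one-step bound $P_{Q_r}(\text{hit }A\setminus\{Q_r\})\le C/d$. (The paper uses the $k$-th return to the staircase rather than the $k$-th \emph{new} point; your version is marginally tighter but the endgame is identical.)

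Where you diverge from the paper is in the proof of the key $O(1/d)$ estimate. The paper does \emph{not} sum the Green's function of the original walk over the infinite staircase. Instead it observes that under the linear map $x\mapsto(x_1-x_2,\dots,x_{d-1}-x_d)$ the simple walk $X_{d,n}$ becomes a $(d-1)$-dimensional (non-simple) walk $\hat X_{d-1,n}$, and the entire infinite staircase projects onto the \emph{finite} set $D_{d-1}=\{0,e_1,\dots,e_{d-1}\}$ of just $d$ points. A return to the staircase in $\mathbb Z^d$ forces $\hat X$ to return to $D_{d-1}$, so it suffices to bound $\sum_{j=0}^{d-1}\hat G_{d-1}(e_j-e_i)-1$, a sum of only $d$ terms. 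The paper expands $(1-\hat\phi_{d-1})^{-1}$ in powers of $\hat\phi_{d-1}$, shows (Lemma~\ref{Lemma tail}) that the $p$-th term integrates to zero whenever $d(i,j)>p$, bounds the common tail by the same large-deviation/spherical-coordinate argument used for $\hat G_{d-1}(0)$, and handles the at most $55$ surviving low-order terms individually.

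Your route---summing $g_d(m)=G(0,Q_m)$ directly over $m\ge1$---is in principle valid, and the Fourier vanishing you sketch (the first $m-1$ powers of $\phi_d$ integrate to zero against $e^{-i(\theta_1+\cdots+\theta_m)}$, the $m$-th gives $m!/(2d)^m$) is precisely the unprojected analogue of Lemma~\ref{Lemma tail}. What the projection buys is that your infinite series collapses: all $Q_m$ with $m\equiv s\pmod d$ share the same image in $D_{d-1}$, so the paper never has to confront the $m\to\infty$ tail or the $d$-dependence of the constant in $G_d(0,x)\lesssim_d|x|^{2-d}$. That last piece is where your sketch is thinnest---getting the off-diagonal Green's function bound with a $d$-uniform constant is not quite ``routine,'' since the standard constant $a_d=\frac{d}{2}\Gamma(d/2-1)/\pi^{d/2}$ grows with $d$---and the projection trick sidesteps the issue entirely.
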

The proof of Theorem \ref{Theorem sharp} can be found at the end of Section \ref{appendix A 1}.

\begin{remark}
Actually, any $C>3/2$ will serve as a good upper bound for sufficiently large $d$. See Remark \ref{remark  tedious} in Appendix \ref{sec:appb} for details. 
\end{remark}

\begin{remark}
Note that we do not present a proof of Theorem \ref{Theorem sharp} for $d=3$. With Theorem \ref{theorem D random walk} at hand, it is possible to prove this case since the cover probability of the diagonal path is bounded by the probability a random walk returns n times to the diagonal before leaving the ball $B_1(n)$. By using concentration inequalities after truncation of the excursion times of 2 dimensional random walks (not simple). However this yields an extremely non sharp bound with a slower order of decaying rate. 
\end{remark}

%


We conjecture that the upper bound in Theorem \ref{Theorem Main} can be improved for paths of length $N^{1+\epsilon}$, for small enough $\epsilon>0$. If this conjecture holds this will yield non trivial bounds for the chemical distance within a 3 dimension random walk trace:
\begin{conjecture}
\label{sharp conjecture}
There is an $\epsilon>0$ and a $\hat P_d\in (0,1)$ such that for any nearest neighbor path $\mathcal{P}$, connecting $0$ and $\partial B_1(N)$, such that $|\CP|\le N^{1+\epsilon}$ and $\{X_n\}_{n=0}^\infty$ which is a $d$ dimensional simple random walk starting at 0 with $d\ge 4$, we always have 
$$
P\left({\rm Trace}(\mathcal{P})\subseteq {\rm Trace}\big(\{X_n\}_{n=0}^\infty\big) \right)\le   \hat P_d^{N^{1+\epsilon}}.  
$$
\end{conjecture}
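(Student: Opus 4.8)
\noindent\emph{Strategy towards Conjecture \ref{sharp conjecture}.} The plan is to turn the ``count the returns'' heuristic behind Theorems \ref{Theorem Main} and \ref{Theorem sharp} into an estimate that sees the \emph{length} of $\mathcal{P}$ and not just the fact that it reaches $\partial B_1(N)$. Dominating the covering probability by that of $\overset{\nearrow}{\mathcal{P}}$ via Theorem \ref{theorem D random walk} cannot give more than $P_d^{[N/d]}$, since $\overset{\nearrow}{\mathcal{P}}$ has length $\asymp N$ regardless of $|\mathcal{P}|$; so I would argue directly with $\mathcal{P}$, splitting along a dichotomy. A nearest neighbor path of length $M\in[N,N^{1+\epsilon}]$ confined to $B_1(N)$ is either ``stretched'' --- comb- or diagonal-like, with $\gtrsim M$ of its points lying in pairwise $L_1$-separated blocks --- or it ``folds'', packing $\gtrsim M$ of its points into $L_1$-balls of radius $o(N)$. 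The two regimes need different inputs, to be combined at the end.

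\smallskip

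\noindent For the stretched regime I would proceed as in the proof of Theorem \ref{Theorem Main}, but at a fixed finite scale $L$ (to be optimized; ultimately a large constant whose cost is absorbed into $\epsilon$): cut $\mathcal{P}$ into consecutive sub-paths $\mathcal{P}^{(1)},\dots,\mathcal{P}^{(m)}$ of length $L$, so $m\asymp M/L\ge N/L$, note that covering $\mathcal{P}$ forces covering every $\mathcal{P}^{(j)}$, and --- revealing $X$ excursion by excursion --- use (the proof of) Theorem \ref{Theorem sharp} to bound, uniformly in the past, the probability that a fresh excursion entering a bounded thickening of a not-yet-covered $\mathcal{P}^{(j)}$ actually covers it, by some $\rho=\rho(d)<1$. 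If the $r$-separated blocks of a stretched $\mathcal{P}$ genuinely force $\Theta(m)$ distinct such excursions --- which is where a Green's function bound on the walk's occupation of the thickening of $\mathcal{P}\subset B_1(N)$ should enter --- this multiplies to $\rho^{\Theta(m)}\le\rho^{\Theta(N/L)}$, of the announced order once $L$ is sent to a constant.

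\smallskip

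\noindent The folded regime is the main obstacle, and the reason this is only a conjecture. A sub-path coiled inside an $L_1$-ball of radius $\rho$ can be swept up during a single long sojourn of the walk, so no purely local estimate improves on the base rate $P_d$; the only robust lower bound on its difficulty is that covering a ball of radius $\rho$ from a point near it costs roughly $\exp(-c\rho^{d-2})$ --- the walk must spend $\gtrsim\rho^{d}$ steps in the ball, and its occupation time there has exponential tails on the scale $\rho^{2}$. Since a fold of $\asymp M$ points only needs radius $\asymp M^{1/d}$, this yields merely $\exp\!\big(-cM^{(d-2)/d}\big)$, weaker than a per-length rate; consequently a path built from a length-$N$ diagonal ``spine'' (covering probability $\asymp(C/d)^{N}$) together with one such fold does not obviously satisfy the stated bound, and it may be that the exponent $N^{1+\epsilon}$ should be replaced by an ``effective length'' interpolating between $N$ and $M^{(d-2)/d}$, or that a non-degeneracy hypothesis on $\mathcal{P}$ is needed. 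Establishing the correct geometry-uniform ``cover a fold'' estimate and gluing it to the stretched-regime bound is therefore the crux of any proof. A cleaner but equally open route would be a length-sensitive strengthening of Theorem \ref{theorem D random walk}, identifying the extremal path among all those of a given length from $0$ to $\partial B_1(N)$; but the length constraint breaks the reflection symmetry used for Theorem \ref{theorem 2D random walk reflection}, so this is not evidently easier.
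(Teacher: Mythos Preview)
This statement is an open conjecture in the paper, not a theorem; there is no proof to compare against. What the paper does offer is a specific candidate approach in Section~6.1 and a demonstration that it fails, so the relevant comparison is between your heuristic strategy and the paper's.

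Your closing remark --- that a cleaner route would be a length-sensitive strengthening of Theorem~\ref{theorem D random walk}, but that the length constraint breaks the reflection symmetry --- is exactly what the paper pursues and refutes. The paper's idea is to track length through the reflection process by replacing ``cover ${\rm Trace}(\mathcal{P})$'' with ``cover ${\rm Trace}(\mathcal{P})$ with its multiplicities $N_{\mathcal{P}}$'' (Definition~\ref{visiting with repetitions}); since $\sum_P n_{P,\mathcal{P}}=K+1$ is preserved under reflection, a monotonicity statement for covering-with-repetitions would give a bound in terms of $K+1$ rather than $N$, i.e.\ precisely the $\hat P_d^{|\mathcal{P}|}$ shape. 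But Proposition~\ref{Counterexample 1} exhibits a concrete pair $\mathcal{P},\mathcal{P}'$ in $\mathbb{Z}^3$ for which the reflected path has \emph{strictly smaller} covering-with-repetitions probability, killing this route. So your instinct that ``reflection breaks'' is confirmed, and the paper pinpoints the mechanism: once repetitions are counted, concentrating visits on one side of the mirror can be harder to realize than spreading them on both sides.

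Your stretched/folded dichotomy is a genuinely different line of attack that the paper does not discuss. The stretched branch is plausible and in the spirit of Theorem~\ref{Theorem sharp}, though the claim that well-separated blocks force $\Theta(m)$ distinct excursions each with an independent $\rho<1$ cost needs a real decoupling argument (the strong Markov property at successive hitting times of the thickenings, plus control on the number of excursions via a Green's-function bound, as you say). The folded branch you correctly flag as the obstruction; your heuristic that a single fold of $M$ points inside a ball of radius $\asymp M^{1/d}$ may be covered with probability decaying only like $\exp(-cM^{1-2/d})$ is consistent with Remark~\ref{rem:sznitman} (Sznitman's lower bound $\exp(-cN^{d-1}\log N)$ for covering all of $B_1(0,N)$), and it does suggest the conjecture may need a reformulation in terms of an ``effective length'' rather than $|\mathcal{P}|$ itself. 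The paper does not address this point, so your discussion here goes beyond what the paper says.
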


\begin{remark}\label{rem:sznitman}
Note that the probability to cover a space filling curve in $B_1(0,N)$ decays asymptotically slower that $c^{N^{d}}$.  Sznitman \cite[Section 2]{sznitmanvacant} showed that the probability a random walk path covers $B_1(0,N)$ completely can be bounded below by $ce^{-cN^{d-1}\log N}$. 
\end{remark}

A natural approach towards Conjecture \ref{sharp conjecture} is to try applying the same reflection process in this paper but also consider the repetition of visits rather than just looking at the trace of the path. However, it is shown in Section 6 that once we consider repetition, the diagonal line (with repetition) no longer maximizes the covering probability.  

\

 And for the family of monotonic nearest neighbor paths starting at 0, we also conjecture that the cover probability is minimized when the path goes straightly along a coordinate axis.  I.e., 

\begin{conjecture}
\label{monotonic conjecture}
For each integers $L\ge N\ge 1$, let $\mathcal{P}$ be any nearest neighbor monotonic path in $\ZZ^d$ with length $N$. $X_n, n\ge 0$ be a d dimensional simple random walk starting at 0. Then
$$
P\big({\rm Trace}(\mathcal{P})\subseteq {\rm Trace}(X_0,\cdots, X_L)\big)\ge P\big(\overset{\rightarrow}{\mathcal{P}}\subseteq {\rm Trace}(X_0,\cdots, X_L)\big)
$$
where 
$$
\overset{\rightarrow}{\mathcal{P}}=\Big((0,0,\cdots, 0),(1,0,\cdots, 0),\cdots,(N-1,0,\cdots, 0) \Big).
$$
\end{conjecture}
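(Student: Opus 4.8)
\medskip
\noindent\emph{Sketch of a possible approach to Conjecture~\ref{monotonic conjecture}.}
Following the strategy behind Theorem~\ref{theorem D random walk}, one would reduce the inequality to a chain of elementary ``straightening'' operations, each of which cannot increase the covering probability. The natural operation lowers by one the number of steps whose direction is not $e_1$: given a monotonic path $\CP$ with at least one such step, let the \emph{last} of them have direction $e_j$ ($j\neq1$) and occur at the corner $v\to v+e_j$ (so that the part of $\CP$ after $v$ is a straight segment $v+e_j,\,v+e_j+e_1,\dots$); replacing this step's direction by $e_1$ shifts the whole of that tail by $e_1-e_j$ and yields a monotonic path $\CP^{*}$ with one fewer non-$e_1$ step and, since monotonicity of the coordinates forbids $v+e_1$ and $v+e_j$ from being earlier vertices, with the same number of vertices. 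Iterating drives $\CP$ to $\overset{\rightarrow}{\CP}$, so it would suffice to show
$$
P\big({\rm Trace}(\CP)\subseteq{\rm Trace}(X_0,\dots,X_L)\big)\ \ge\ P\big({\rm Trace}(\CP^{*})\subseteq{\rm Trace}(X_0,\dots,X_L)\big),
$$
and then to pass from $\ZZ^2$ to general $d$ one coordinate pair $(e_1,e_j)$ at a time, exactly as Theorem~\ref{theorem D random walk} is obtained from Theorem~\ref{theorem 2D random walk reflection}.

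Morally this operation is within reach of the reflection method: the translation $x\mapsto x+(e_1-e_j)$ carrying the old tail onto the new one is the composition of the two reflections in the consecutive lattice diagonals $\{x_1-x_j=a\}$ and $\{x_1-x_j=a+1\}$, so one could hope to split the old tail off as the set $B_0$ of Theorem~\ref{theorem 2D random walk reflection}, reflect it twice, and land on the new tail while the common part $A_0$ is fixed. The obstruction is the one-sidedness requirement in Theorem~\ref{theorem 2D random walk reflection}: a monotonic path meets a diagonal $\{x_1-x_j={\rm const}\}$ once for every oscillation of its $(x_1,x_j)$-profile, so in general neither the old tail nor the prefix lies on a single side of a given diagonal, and the two diagonals cannot be positioned so that \emph{both} reflections are legal -- this already fails for the path with direction word $e_2e_1e_1$, for which the vertex $2e_1+e_2$ and the vertex $e_2$ to be reflected lie on opposite sides of $\{x_1=x_2\}$. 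Taking $B_0$ to be a single vertex (so that a single diagonal reflection suffices, sending one corner vertex to the other as in an adjacent transposition of two consecutive step directions) does not repair this -- the one-sidedness of $A_0$ typically breaks at the very first such move -- and, moreover, adjacent transpositions preserve the multiset of step directions and hence cannot by themselves reach $\overset{\rightarrow}{\CP}$.

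The real difficulty, then, is that the reflection machinery developed here is designed to \emph{fold} a path toward the diagonal and offers no direct way to \emph{unfold} one toward a coordinate axis. Making the argument go through seems to demand a genuinely new ingredient: perhaps a rearrangement inequality for covering probabilities valid under a hypothesis weak enough to be arrangeable along some carefully ordered straightening procedure; or a proof avoiding reflections altogether, such as an induction on the number of non-$e_1$ steps driven by the recursive decomposition of the covering event -- condition on the first vertex of the path that the walk hits, then compare the covering probabilities of the two residual sub-paths term by term; or a coupling that makes quantitative the intuition that $\overset{\rightarrow}{\CP}$, being the monotonic path of largest Euclidean diameter among those of a given length, is the hardest for a transient random walk to cover. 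We expect this last point -- turning ``more spread out'' into ``harder to cover'' into a rigorous comparison -- to be where the genuine work lies, which is why the statement is recorded here only as a conjecture.
\medskip
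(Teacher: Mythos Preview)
Your proposal is not a proof, and you are right not to present it as one: the statement is a conjecture in the paper, and the paper offers no proof either. What the paper provides is only the heuristic that among monotonic paths of fixed $L_1$ length the straight line maximizes the $L_2$ diameter and hence should be hardest to cover, together with numerical evidence for $d=3$, $N=3$, $L=400$ (Section~6.2 and the accompanying remark). There is nothing to compare at the level of argument.

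That said, your discussion goes well beyond the paper's brief intuition and is a genuinely useful analysis of \emph{why} the reflection machinery of Theorem~\ref{theorem 2D random walk reflection} does not transfer. Your identification of the obstruction --- that Theorem~\ref{theorem 2D random walk reflection} requires $A_0$ and $B_0$ to lie entirely on one side of the reflecting hyperplane, while any ``unfolding'' move toward a coordinate axis forces the prefix and the tail to straddle the relevant diagonal --- is correct and is exactly the asymmetry between folding toward the diagonal (which the paper exploits) and unfolding toward an axis (which the conjecture would require). Your concrete counterexample $e_2e_1e_1$ is apt. The paper's own remark after the conjecture makes a complementary point: the crude return-probability bounds one gets from the method are actually \emph{larger} for the straight line than for the diagonal, so even the numerics of the method run the wrong way.

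One small correction: your claim that ``monotonicity of the coordinates forbids $v+e_1$ and $v+e_j$ from being earlier vertices'' is not quite right as stated --- $v+e_j$ \emph{is} the next vertex of $\CP$, and what you need (and what monotonicity does give) is that $v+e_1$ is not already on $\CP$, so that $\CP^{*}$ has the same number of vertices. This does not affect your overall assessment, since you correctly conclude the straightening program stalls for other reasons.
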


\begin{remark}
Note that the constants we get in Theorem \ref{Theorem Main} are not sharp. In fact, the upper bound we obtain for the covering of the diagonal path is of order $(1/2d)^N$. If we use the same argument as in Theorem \ref{Theorem Main} for the straight line we will get a bound of $[1/2(d-1)]^N$, since a return to the straight line is equivalent to a $d-1$ dimensional random walk returning to the origin. Thus we get that the bound we obtain is larger for the path that we conjecture minimizes the cover probability.
\end{remark}

 The structure of this paper is as follows: in Section 2 we prove a combinatorial inequality, which can be found later equivalent to finding a one-to-one mapping between paths of random walks within each of some equivalence classes. In Section 3 we use this combinatorial inequality to prove Theorem \ref{theorem 2D random walk reflection}. With Theorem \ref{theorem 2D random walk reflection}, we construct a finite sequence of paths with non-decreasing covering probabilities in Section 4 to show that the covering probability is maximized by the path that goes along the diagonal, see Theorem \ref{theorem D random walk} and \ref{theorem 2D random walk}. The proof of Theorem \ref{Theorem Main} is completed is Section 5, while in Section 6 we discuss the two conjectures and show numerical simulations. In Appendix \ref{sec:appb} we prove that $\lim_{d\rightarrow\infty} 2dP_d=1$ and then show that the probability a simple random walk returns to $\overset{\nearrow}{\mathcal{P}}$ also has an upper bound of $O(d^{-1})$, which implies Theorem \ref{Theorem sharp}. In Appendix \ref{sec:appa} we prove that the monotonicity fails when considering covering probability with repetitions.

\section{Combinatorial Inequalities}

In this section, we discuss a combinatorial inequality problem, which can be found equivalent to finding a one-to-one mapping between paths of random walks. For $n\in \ZZ^+$ and $\Omega$ be a set of $n$ integer numbers, say $\Omega=\{1,2,\cdots, n\}$ and any $A\subset\Omega$, abbreviate
$-A=\{-x:x\in A\}$ and $A^c=\Omega\setminus A$.
For any $m\in \ZZ^+$, consider a {\bf collection of arcs} which is a ``vector" of subsets 
$$
\vec V=V_1\otimes V_2\otimes\cdots \otimes V_m, \ V_k\subseteq \Omega,
$$
where each $V_k$ is called an arc, and an $m$ dimensional vector $\vec D=(\delta_1,\cdots, \delta_m)\in \{-1,1\}^{m}$ which is called a {\bf configuration}. Then we can introduce the {\bf inner product} 
\beq
\label{inner product} 
\vec D \cdot \vec V=\bigcup_{k=1}^m \delta_k V_k\subseteq -\Omega \cup \Omega.
\eeq
Moreover, for any subset $A\subseteq \Omega$, we say a configuration $\vec D$ of $\vec V$ {\bf covers} the reflection $A$ if 
$$
 -A^c\cup A\subseteq \vec D \cdot \vec V,
$$ 
and let 
$$
\mathcal{C}\left( \vec V,A\right)=\left\{\vec D: \  -A^c\cup A\subseteq \vec D \cdot \vec V \right\}
$$
be the subset of all such configurations. Then to show that a simple random walk has higher probability to cover the reflection of a finite set which bring it closer to the origin, we first need to prove that 
\begin{lemma}
\label{Lemma Combinatorial Inequalities}
For any $m,n\in \ZZ^+$, and any collection of arcs $\vec V$
\beq
\label{Combinatorial Inequalities}
\left| \mathcal{C}\left( \vec V,\Omega\right)\right|\ge \left| \mathcal{C}\left( \vec V, A\right)\right|
\eeq
for all $A\subseteq \Omega$.
\end{lemma}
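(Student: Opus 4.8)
The plan is to recast both cardinalities in \eqref{Combinatorial Inequalities} as rescaled probabilities under the uniform measure on $\{-1,1\}^m$ and then read the inequality off from the Harris/FKG correlation inequality. For a configuration $\vec D=(\delta_1,\dots,\delta_m)$ write $U^+(\vec D)=\bigcup_{k:\delta_k=1}V_k$ and $U^-(\vec D)=\bigcup_{k:\delta_k=-1}V_k$. Since the elements of $\Omega$ are positive integers, a positive number can belong to $\vec D\cdot\vec V=\bigcup_k\delta_kV_k$ only via an arc carrying a $+$ sign, and a negative number only via an arc carrying a $-$ sign; hence $A\subseteq\vec D\cdot\vec V\iff A\subseteq U^+(\vec D)$ and $-A^c\subseteq\vec D\cdot\vec V\iff A^c\subseteq U^-(\vec D)$. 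Using $\Omega=A\sqcup A^c$ this gives the identifications $\mathcal{C}(\vec V,\Omega)=\{\vec D:A\subseteq U^+(\vec D)\}\cap\{\vec D:A^c\subseteq U^+(\vec D)\}$ and $\mathcal{C}(\vec V,A)=\{\vec D:A\subseteq U^+(\vec D)\}\cap\{\vec D:A^c\subseteq U^-(\vec D)\}$.

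Next, equip $\{-1,1\}^m$ with the uniform product measure $\pr$ and the coordinatewise order with $-1<1$. The events $E=\{A\subseteq U^+\}$ and $F=\{A^c\subseteq U^+\}$ are increasing, while $G=\{A^c\subseteq U^-\}$ is decreasing, because turning a $-1$ into a $+1$ can only enlarge $U^+$ and shrink $U^-$. The global sign flip $\vec D\mapsto-\vec D$ preserves $\pr$ and interchanges $U^+$ with $U^-$, so $\pr(F)=\pr(G)$. Harris' inequality then gives $\pr(E\cap F)\ge\pr(E)\pr(F)$ (two increasing events) and $\pr(E\cap G)\le\pr(E)\pr(G)$ (an increasing and a decreasing event), whence $\pr(E\cap F)\ge\pr(E)\pr(F)=\pr(E)\pr(G)\ge\pr(E\cap G)$. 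Multiplying through by $2^m$ turns this into $\left|\mathcal{C}(\vec V,\Omega)\right|\ge\left|\mathcal{C}(\vec V,A)\right|$, which is \eqref{Combinatorial Inequalities}.

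Along this route there is little obstacle beyond bookkeeping: the delicate points are the reformulation of $\mathcal{C}(\vec V,\Omega)$ and $\mathcal{C}(\vec V,A)$ as intersections of monotone events — in particular noticing that $\mathcal{C}(\vec V,A)$ involves the \emph{decreasing} event $\{A^c\subseteq U^-\}$, which is exactly what makes the two correlation bounds point in compatible directions — together with the symmetry $\pr(F)=\pr(G)$. If one instead wants a fully self-contained combinatorial argument, closer in spirit to the one-to-one correspondence between random-walk paths alluded to in the introduction, one can replace the appeal to Harris by its standard inductive proof: induct on $m$ and split according to the value of $\delta_m$, which at each step amounts to an explicit sign-preserving injection between the relevant configuration sets. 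The bulk of the work then moves into that induction, and the step requiring care is checking that an injection built for the truncated collection $V_1\otimes\cdots\otimes V_{m-1}$ still respects all coverage constraints once the arc $V_m$ is reattached.
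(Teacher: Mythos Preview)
Your proof is correct and takes a genuinely different route from the paper's. The paper argues by a double induction on $m$ and $n$: it peels off the last arc $V_{m+1}$, decomposes $\mathcal{C}(\vec V,A)$ according to whether the first $m$ arcs already cover $-A^c\cup A$, and reduces the remainder to a smaller instance on the ground set $\Omega'=V_{m+1}^c$. This is explicit and self-contained but requires tracking several auxiliary sets ($\mathcal{C}_1$, $\mathcal{C}_2$, $\mathcal{C}'$) and verifying inclusion \eqref{inequality_A} becomes an equality precisely when $A=\Omega$.

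Your argument instead recognises that $\mathcal{C}(\vec V,\Omega)=E\cap F$ and $\mathcal{C}(\vec V,A)=E\cap G$ with $E,F$ increasing, $G$ decreasing, and $\mathbb{P}(F)=\mathbb{P}(G)$ by sign-flip symmetry; the Harris inequality then gives the comparison in one line. This is shorter, and it explains \emph{why} the inequality holds: the two constraints defining $\mathcal{C}(\vec V,\Omega)$ are positively correlated while those defining $\mathcal{C}(\vec V,A)$ are negatively correlated. The price is appealing to Harris as a black box, though as you note its standard proof is itself a single induction on $m$---so your route is in fact strictly simpler than the paper's double induction, and avoids the reduction on $n$ entirely. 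The paper's approach does have the minor advantage of being phrased purely in terms of set cardinalities and injections, which matches the ``one-to-one mapping between paths'' framing it advertises; your proof trades that for a cleaner probabilistic picture.
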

\begin{proof}
First by symmetry one can immediately see that 
$$
\left| \mathcal{C}\left( \vec V,\Omega\right)\right|=\left| \mathcal{C}\left( \vec V,\O\right)\right|.
$$
So we will concentrate when $A\not=\O$ and prove the inequality by induction on $m$ and $n$. To show the basis of induction, it is easy to see that for any $n$ and $m=1$
$$
\left| \mathcal{C}\left( \vec V,\Omega\right)\right|=0=\left| \mathcal{C}\left( \vec V,A\right)\right|
$$
if $V_1\not=\Omega$ and 
$$
\left| \mathcal{C}\left( \vec V,\Omega\right)\right|=1>0=\left| \mathcal{C}\left( \vec V,A\right)\right|
$$
if $V_1=\Omega$. Then for any $m$ and $n=1$, by definition we must have $V_i=\{1\}$ or $\O$ for each $i$, and we always have $A=\Omega$. Thus 
$$
\left| \mathcal{C}\left( \vec V,\Omega\right)\right|=\left| \mathcal{C}\left( \vec V,A\right)\right|=2^{n_e(\vec V)}\left(2^{m-n_e(\vec V)}-1\right),
$$
where $n_e(\vec V)$ is the number of empty sets in $V_1,\cdots, V_m$. With the method of induction, suppose the desired inequality is true for all $n< n_0$ and all $n=n_0, m\le m_0$. Then for $n=n_0, m=m_0+1$, we can separate the last arc $V_{m_0+1}$ and the rest of the arcs and look at the truncated system at $m_0$. I.e., 
$$
\vec V[1:m_0]=V_1\otimes V_2\otimes\cdots \otimes V_{m_0}
$$
and 
$$
\vec D[1:m_0]=(\delta_1,\cdots, \delta_{m_0}).
$$
We have for any $A$
\beq
\label{separate}
\begin{aligned}
\mathcal{C}\left( \vec V,A\right)&=\left\{\vec D: -A^c\cup A\subseteq \vec D[1:m_0] \cdot \vec V[1:m_0] \right\}\cup\mathcal{P}\left( \vec V,A\right) 
\end{aligned}
\eeq
where 
$$
\mathcal{P}\left( \vec V,A\right) =\left\{\vec D: \  -A^c\cup A\nsubseteq \vec D[1:m_0] \cdot \vec V[1:m_0], \  -A^c\cup A\subseteq \vec D \cdot \vec V  \right\}.
$$
Noting that 
$$
\left| \left\{\vec D: \  -A^c\cup A\subseteq \vec D[1:m_0] \cdot \vec V[1:m_0] \right\}\right|=2\left| \mathcal{C}\left( \vec V[1:m_0] ,A\right)\right|, 
$$
and that the two events in \eqref{separate} are disjoint, 
\beq
\label{separate_1}
\left| \mathcal{C}\left( \vec V,A\right)\right|=2\left| \mathcal{C}\left( \vec V[1:m_0] ,A\right)\right|+\left|\mathcal{P}\left( \vec V,A\right) \right|.
\eeq
In order to study the cardinality of $\mathcal{P}$, we first show that 
\begin{lemma}
\label{lemma one-to-one 1}
For any two different $\vec D_1$ and $\vec D_2$ in $\mathcal{P}\left( \vec V,A\right)$, we must have 
$$
\vec D_1[1:m_0]\not=\vec D_2[1:m_0].   
$$
\end{lemma}
\begin{proof}
The proof is straightforward. Suppose $\vec D_1[1:m_0]=\vec D_2[1:m_0]=D'$, then their $m_0+1$st coordinates must be different. Thus
$$
V_{m_0+1}\cup D_1[1:m_0] \cdot \vec V[1:m_0]\supseteq -A^c\cup A
$$
and
$$
-V_{m_0+1}\cup D_1[1:m_0] \cdot \vec V[1:m_0]\supseteq -A^c\cup A.
$$
The first equality above implies that 
$$
 \Big(\vec D_1[1:m_0] \cdot \vec V[1:m_0]\Big)^c\cap \Big(-A^c\cup A\Big)\subseteq V_{m_0+1}
$$
while the second implies that 
$$
 \Big(\vec D_1[1:m_0] \cdot \vec V[1:m_0]\Big)^c\cap \Big(-A^c\cup A\Big)\subseteq -V_{m_0+1}.
$$
Combining the two inequalities gives us 
$$
 -A^c\cup A\subseteq \vec D_1[1:m_0] \cdot \vec V[1:m_0] 
$$
which contradicts with the definition of $\mathcal{P}\left( \vec V,A\right)$. Thus the proof is complete. 
\end{proof}
 With Lemma \ref{lemma one-to-one 1}, we now know that there is one-to-one mapping between each configuration in $\mathcal{P}\left( \vec V,A\right)$ and its first $m_0$ coordinates. And note that 
$$
\mathcal{P}\left( \vec V,A\right)=\left(\mathcal{P}\left( \vec V,A\right)\cap \{\delta_{m_0+1}=1\}\right)\cap \left(\mathcal{P}\left( \vec V,A\right)\cap \{\delta_{m_0+1}=-1\}\right).
$$
We have 
\beq
\label{separate_2}
\begin{aligned}
\left|\mathcal{P}\left( \vec V,A\right) \right|=\left|\mathcal{C}_1\left( \vec V[1:m_0] ,A\right)\right|+\left|\mathcal{C}_2\left( \vec V[1:m_0] ,A\right)\right|
\end{aligned}
\eeq
where 
$$
\mathcal{C}_1\left( \vec V[1:m_0] ,A\right)=\left\{ \vec D'\in \{-1,1\}^{m_0},  \O\not=\Big(\vec D'\cdot \vec V[1:m_0]\Big)^c\cap \Big(-A^c\cup A\Big)\subseteq V_{m_0+1} \right\}
$$
and
$$
\mathcal{C}_2\left( \vec V[1:m_0] ,A\right)=\left\{ \vec D'\in \{-1,1\}^{m_0},  \O\not=\Big(\vec D' \cdot \vec V[1:m_0]\Big)^c\cap \Big(-A^c\cup A\Big)\subseteq -V_{m_0+1} \right\}.
$$
Moreover, note that $\mathcal{C}\left( \vec V[1:m_0] ,A\right)$, $\mathcal{C}_1\left( \vec V[1:m_0] ,A\right)$ and $\mathcal{C}_2\left( \vec V[1:m_0] ,A\right)$ are disjoint and that 
\beq
\label{inequality_A}
\mathcal{C}\left( \vec V[1:m_0] ,A\right)\cup \mathcal{C}_1\left( \vec V[1:m_0] ,A\right)\cup \mathcal{C}_2\left( \vec V[1:m_0] ,A\right)\subseteq \mathcal{C}'\left( \vec V' ,A'\right)
\eeq
where 
$$
\vec V'=\left(V_1\cap V_{m_0+1}^c\right)\otimes \left(V_2\cap V_{m_0+1}^c\right)\otimes\cdots \otimes \left(V_{m_0}\cap V_{m_0+1}^c\right)
$$
with each $V'_k=V_k\cap V_{m_0+1}^c\subseteq \Omega'= V_{m_0+1}^c$, $A'=A\cap V_{m_0+1}^c$, and
$$
\mathcal{C}'\left( \vec V',A'\right)=\left\{\vec D\in  \{-1,1\}^{m_0}: \  -\left(\Omega'\cap A'^c\right)\cup A'\subseteq \vec D' \cdot \vec V' \right\}.
$$
In words, in order to be in one of the 3 disjoint subsets above, we must guarantee that all points in $\Omega'= V_{m_0+1}^c$ under reflection of $A$ are covered by the configuration  $\vec D'$ of $\vec V[1:m_0]$. To verify \eqref{inequality_A}, one can note that for any
$$
\vec D'\in \mathcal{C}\left( \vec V[1:m_0] ,A\right)\cup \mathcal{C}_1\left( \vec V[1:m_0] ,A\right)\cup \mathcal{C}_2\left( \vec V[1:m_0] ,A\right)
$$
we have
$$
 \vec D'\cdot \vec V[1:m_0]\supseteq \Big(-A^c\cup A\Big)\cap \Big(- V_{m_0+1}^c\cup V_{m_0+1}^c\Big)
$$
which implies 
\beq
\label{inequality_A1}
\begin{aligned}
 &\Big(\vec D'\cdot \vec V[1:m_0]\Big)\cap \Big(- V_{m_0+1}^c\cup V_{m_0+1}^c\Big)\\
 & \ \ \ \ \supseteq \Big(-A^c\cup A\Big)\cap \Big(- V_{m_0+1}^c\cup V_{m_0+1}^c\Big).
 \end{aligned}
\eeq
In \eqref{inequality_A1} we have the right hand side equals to 
$$
A'\cup -\Big(A^c\cap  V_{m_0+1}^c\Big)=-\left(\Omega'\cap A'^c\right)\cup A',
$$
and the left hand side equals to 
$$
\bigcup_{k=1}^{m_0}\left(\delta_k V_k\cap  \Big(- V_{m_0+1}^c\cup V_{m_0+1}^c\Big)\right).
$$
Noting that for each $k$
$$
\delta_k V_k\cap  \Big(-V_{m_0+1}^c\cup V_{m_0+1}^c\Big)=\delta_k \big(V_k\cap V_{m_0+1}^c \big),
$$
we have
$$
\bigcup_{k=1}^{m_0}\left(\delta_k V_k\cap  \Big(- V_{m_0+1}^c\cup V_{m_0+1}^c\Big)\right)=\vec D' \cdot \vec V' 
$$
which shows that $\vec D'$ also in $\mathcal{C}'\left( \vec V',A'\right)$.

 Specifically, when $A=\Omega$, note that for any $D'\in \mathcal{C}'\left( \vec V',\Omega'\right)$, $\Omega'=\vec D' \cdot \vec V'\subseteq D'\cdot \vec V[1:m_0]$. Thus
$$
\vec D'\cdot \vec V[1:m_0]\cup V_{m_0}=\Omega
$$ 
which implies that 
\beq
\label{equality_Omega}
\mathcal{C}\left( \vec V[1:m_0] ,\Omega\right)\cup \mathcal{C}_1\left( \vec V[1:m_0] ,\Omega\right)\cup \mathcal{C}_2\left( \vec V[1:m_0] ,\Omega\right)= \mathcal{C}'\left( \vec V' ,\Omega'\right).
\eeq
Combining \eqref{separate_1}-\eqref{equality_Omega} and the induction hypothesis, we have
$$ 
\begin{aligned}
\left| \mathcal{C}\left( \vec V,\Omega\right)\right|=&\left| \mathcal{C}\left( \vec V[1:m_0] ,\Omega\right)\right|+\left| \mathcal{C}\left( \vec V[1:m_0] ,\Omega\right)\right|+\left|\mathcal{C}_1\left( \vec V[1:m_0] ,\Omega\right)\right|+\left|\mathcal{C}_2\left( \vec V[1:m_0] ,\Omega\right)\right|\\
=&\left| \mathcal{C}\left( \vec V[1:m_0] ,\Omega\right)\right|+\left|\mathcal{C}'\left( \vec V' ,\Omega'\right) \right|\\
\ge&\left| \mathcal{C}\left( \vec V[1:m_0] ,A\right)\right|+\left|\mathcal{C}'\left( \vec V' ,A\right) \right|\\
\ge&\left| \mathcal{C}\left( \vec V[1:m_0] ,A\right)\right|+\left| \mathcal{C}\left( \vec V[1:m_0] ,A\right)\right|+\left|\mathcal{C}_1\left( \vec V[1:m_0] ,A\right)\right|+\left|\mathcal{C}_2\left( \vec V[1:m_0] ,A\right)\right|\\
=&\left| \mathcal{C}\left( \vec V,A\right)\right|.
\end{aligned}
$$
And thus the proof of Lemma \ref{Lemma Combinatorial Inequalities} is complete. 
\end{proof}

\section{Proof of Theorem \ref{theorem 2D random walk reflection}}

With the combinatorial inequality above, we can study the covering probability of simple random walks. Let $\mathcal{N}_L$ be the set of all nearest neighbor paths starting at 0 of length $L+1$ and consider 2 subsets of  $\mathcal{N}_L$ as follows:
$$
\mathcal{N}_{L,1}=\left\{\vec x\in \mathcal{N}_L, A_0\cup B_0\subseteq{\rm Trace}\big(\vec x\big)  \right\},
$$
and
$$
\mathcal{N}_{L,2}=\left\{\vec x\in \mathcal{N}_L, A_0\cup \varphi_l(B_0)\subseteq{\rm Trace}\big(\vec x\big)  \right\}.
$$

\begin{figure}[h!]
\centering
\begin{tikzpicture}
\tikzstyle{redcirc}=[circle,
draw=black,fill=myred,thin,inner sep=0pt,minimum size=2mm]
\tikzstyle{bluecirc}=[circle,
draw=black,fill=myblue,thin,inner sep=0pt,minimum size=2mm]

\node (v1) at (0,0) [bluecirc] {\small{$~$}};
\node (v2) at (1,0) [bluecirc] {\small{$~$}};
\node (v3) at (2,0) [bluecirc] {\small{$~$}};
\node (v4) at (3,0) [bluecirc] {\small{$~$}};
\node (v5) at (4,0) [bluecirc]{\small{$~$}};
\node (v6) at (5,0) [bluecirc] {\small{$~$}};
\node (v11) at (6,0) [bluecirc] {\small{$~$}};
\node (v7) at (5,1) [redcirc] {\small{$~$}};
\node (v8) at (6,1) [bluecirc] {\small{$~$}};
\node (v9) at (7,1) [bluecirc] {\small{$~$}};
\node (v10) at (6,2) [redcirc] {\small{$~$}};

\draw [thick] (v1) to (v2);
\draw [thick] (v2) to (v3);
\draw [thick] (v3) to (v4);
\draw [thick] (v4) to (v5);
\draw [thick] (v5) to (v6);
\draw [myred,thick] (v6) to (v7);
\draw [myred,thick] (v7) to (v8);
\draw [thick] (v8) to (v9);
\draw [myred,thick] (v8) to (v10);
\draw [thick] (v6) to (v11);
\draw [thick] (v11) to (v8);
\draw [dashed] (v6) to (v8);
\draw [dashed] (v6) to (4,-1);
\draw [dashed] (v8) to (7.9,2.9);

\draw [black, thick] plot [smooth, tension=1.2] coordinates { (0,0) (1,1) (2,-1) (1,0) (2,0) (3,0) (4,1.5) (5,1) (5.5,0.5)};
\draw [cyan, thick] plot [smooth, tension=1.2] coordinates { (5.5,0.5) (6,0.5) (7,0) (7.5,0.5) (7,1) (6,1)};
\draw [myred, thick] plot [smooth, tension=1.2] coordinates { (5.5,0.5) (5.5,1) (5,2) (5.5, 2.5) (6,2) (6,1)};

\draw [cyan, thick] plot [smooth, tension=1.2] coordinates {(6,1) (6.5,0.5) (6.5,-0.5) (5,-1) (5,0)};
\draw [myred, thick] plot [smooth, tension=1.2] coordinates {(6,1) (5.5,1.5) (4.5,1.5) (4,0) (5,0)};

\draw[step=0.5cm,gray,very thin] (-0.9,-0.9) grid (7.9,2.9);
\end{tikzpicture}
\caption{Combinatorial covering argument.}
\label{fig:notIR}
\end{figure}

Noting that the probabilities a simple random walk starting at 0 taking each $\vec x\in \mathcal{N}_L$ as its first $L$ steps are the same, it suffices to show that 
\beq
\label{number of paths}
|\mathcal{N}_{L,1}|\ge |\mathcal{N}_{L,2}|. 
\eeq
To prove \eqref{number of paths}, we need to first partition $\mathcal{N}_L$ into disjoint subsets, each of which serves as an equivalence class under the equivalence relation on $\mathcal{N}_L$ describe below.

 For each $\vec x=(x_0,x_1,\cdots, x_L)\in \mathcal{N}_L$, let $T_0=0$, $T_1=\inf\{n: x_n\in l\}$, and
$$
T_n=\inf\{n\ge T_{n-1}: x_n\in l\}
$$
for each integer $n\in [2,L]$ to be the time of the $n$th visit to $l$. Here we use the convention that $\inf\{\O\}=\infty$ and let $T_{L+1}=\infty$. Then for each $n=0,1,\cdots, L$, define the $n$th arc of $\vec x$ as 
$$
{\rm arc}\big(\vec x, n\big)=\vec x[T_{n}:T_{n+1}),
$$
where we use the convention that $\vec x[k:\infty)=\vec x[k:L]$, and $\vec x[0:0)=\vec x[\infty:\infty)=\O$. I.e., the $n$th arc is the piece of the path between the (possible) $n$th and $n+1$st visit to $l$. Since $\vec x$ is a nearest neighbor path, all points in each arc must be on the same side of $l$. Then for any $D=(\delta_1,\delta_2,\cdots,\delta_L)\in \{-1,1\}^{L}$ we can define a mapping $\varphi_{l,D}$ on $\mathcal{N}_L$ so that for any $\vec x$
$$
\varphi_{l,D}\big( \vec x\big)=\left( {\rm arc}\big(\vec x, 0\big), \varphi_l^{\frac{1-\delta_1}{2}}\left({\rm arc}\big(\vec x, 1\big) \right),  \varphi_l^{\frac{1-\delta_2}{2}}\left({\rm arc}\big(\vec x, 2\big) \right), \cdots,  \varphi_l^{\frac{1-\delta_L}{2}}\left({\rm arc}\big(\vec x, L\big) \right)\right).
$$
In words, we keep the part of the path the same until it first visits $l$. Then for the $n$th arc, we keep it unchanged if  $\delta_n=1$ and reflect it around $l$ if $\delta_n=-1$. By definition, it is easy to see that $\varphi_{l,D}\big( \vec x\big)\in \mathcal{N}_L$. And since 
$$
\varphi_{l,D}\circ \varphi_{l,D}\big( \vec x\big)\equiv \vec x, 
$$
$\varphi_{l,D}$ forms a bijection on $ \mathcal{N}_L$. Now we can introduce the equivalence relation on $\mathcal{N}_L$ previously mentioned. For each two $\vec x, \vec y\in \mathcal{N}_L$, we say $\vec x\sim \vec y$ if there exist a $D\in \{-1,1\}^{L}$ such that (see Figure \ref{fig:notIR})
$$
\varphi_{l,D}\big( \vec x\big)=\vec y.
$$
Then one can immediately check that if $\vec x\sim \vec y$ and $D$ is the configuration, then 
\beq
\label{equivalent1}
\vec x=\varphi_{l,D}\big( \vec y\big)\Rightarrow \vec y\sim \vec x
\eeq
and for $D_0\equiv 1$, 
\beq
\label{equivalent2}
\vec x=\varphi_{l,D_0}\big( \vec x\big)\Rightarrow \vec x\sim \vec x.
\eeq
Moreover, if 
$$
\varphi_{l,D_1}\big( \vec x\big)=\vec y, \ \varphi_{l,D_2}\big( \vec y\big)=\vec z, 
$$
let 
$$
D_3=\left(2*\ind_{\delta_{1,1}=\delta_{2,1}}-1,2*\ind_{\delta_{1,2}=\delta_{2,2}}-1,\cdots, 2*\ind_{\delta_{1,L}=\delta_{2,L}}-1\right)
$$
we have 
\beq
\label{equivalent3}
\varphi_{l,D_3}\big( \vec x\big)=\vec z\Rightarrow \vec x\sim \vec z.
\eeq
Combining \eqref{equivalent1}-\eqref{equivalent3}, we have that $\sim$ forms a equivalence relation on $\mathcal{N}_L$, where each path in $\mathcal{N}_L$ belongs to one equivalence class. Thus all the equivalence classes are disjoint from each other and there has to be a finite number of them, forming a partition of $\mathcal{N}_L$. We denote these equivalence classes as 
\beq
\label{equivalent class}
\mathcal{C}_{L,1},\mathcal{C}_{L,2},\cdots, \mathcal{C}_{L,J}
\eeq
where each of them can be represented by its specific element $\vec x_{k,+}, \ k=1,\cdots, J$, which is the unique path in each class that always stays on the left of $l$. 

 Then for each $k$, let $n_{k,1}<n_{k,2}<\cdots<n_{k,m_k}$ be all the $n$'s such that 
$$
\Big|{\rm Trace}\left({\rm arc}\big(\vec x_{k,+}, n\big)\right)\Big|>1. 
$$
Note that the only case when we have $\Big|{\rm Trace}\left({\rm arc}\big(\vec x_{k,+}, n\big)\right)\Big|=0$ is when $T_n=T_{n+1}=\infty$ and the only case when it equals to 1 is when $T_n=L$.
Then for any $\vec x\in \mathcal{C}_{L,k}$ and any $D_1, D_2$ such that 
$$
\varphi_{l,D_1}\big( \vec x_{k,+}\big)=\varphi_{l,D_2}\big( \vec x_{k,+}\big)=\vec x,
$$
we must have $\delta_{1,n_{k,i}}=\delta_{2,n_{k,i}}$ for all $i$'s. So we have a well defined onto mapping $f$ between $\mathcal{C}_{L,k}$ and $\{-1,1\}^{m_k}$ where each $\vec x$ such that 
$$
\vec x=\varphi_{l,D}\big( \vec x_{k,+}\big)
$$
for some $D$ is mapped to 
$$
f\big(\vec x\big)=(\delta_{2,n_{k,1}}, \delta_{2,n_{k,2}}, \cdots, \delta_{2,n_{k,m_k}}).
$$
Moreover, for any two configurations $D_1$ and $D_2$ such that $\delta_{1,n_{k,i}}=\delta_{2,n_{k,i}}$ for all $i$'s, we also must have 
$$
\varphi_{l,D_1}\big( \vec x_{k,+}\big)=\varphi_{l,D_2}\big( \vec x_{k,+}\big)
$$
The reason of that is for all $n$ not in $\{n_{k,i}\}_{i=1}^{m_k}$,
$$
\Big|{\rm Trace}\left({\rm arc}\big(\vec x_{k,+}, n\big)\right)\Big|\le1
$$
which means those arcs are either empty or just one point $x_{T_n}$ right on the diagonal, which does not change at all under any possible reflection. Thus we have proved that the mapping $f$
is a bijection between $\mathcal{C}_{L,k}$ and $\{-1,1\}^{m_k}$.

 At this point we have the tools we need and can go back to compare the two covering probabilities. Noting that the equivalence classes in \eqref{equivalent class} form a partition of $\mathcal{N}_L$. It suffices to show that for each $k\le J$
\beq
\label{number of paths2}
\left|\mathcal{N}_{L,1}\cap \mathcal{C}_{L,k}\right|\ge \left|\mathcal{N}_{L,2}\cap \mathcal{C}_{L,k}\right|. 
\eeq
For each class $\mathcal{C}_{L,k}$. First, if 
$$
(A_0\cup B_0)\cap l\nsubseteq {\rm Trace}(\vec x_{k,+}[T_1,\cdots, T_L])
$$
then one can immediately see 
$$
\left|\mathcal{N}_{L,1}\cap \mathcal{C}_{L,k}\right|= \left|\mathcal{N}_{L,2}\cap \mathcal{C}_{L,k}\right|=0. 
$$
Otherwise, let $\Omega_k=(A_0\cup B_0)\cap {\rm Trace}(\vec x_{k,+}[0:T_1])^c\cap l^c$ and $A_k=A_0\cap {\rm Trace}(\vec x_{k,+}[0:T_1])^c\cap l^c$. And let $n_k=|\Omega_k|$. We can also list all points in $\Omega_k$ as $\omega_1,\cdots, \omega_{n_k}$ and all points in $\varphi_l(\Omega_k)$ as $\omega_{-1},\cdots, \omega_{-n_k}$, where $\varphi_l(\omega_j)=\omega_{-j}$ for all $j$. Then it is easy to check that 
\beq
\label{number of paths3}
\mathcal{N}_{L,1}\cap \mathcal{C}_{L,k}=\left\{\vec x\in \mathcal{C}_{L,k},  \Omega_k\subseteq {\rm Trace}(\vec x)\right\}
\eeq
since all other points in $A_0\cup B_0$ are guaranteed to be visited by $\vec x_{k,+}[0:T_1]$ or $\vec x_{k,+}[T_1,\cdots, T_L]$ which are both shared over all paths in this equivalence class. And we also have 
\beq
\label{number of paths4}
\mathcal{N}_{L,2}\cap \mathcal{C}_{L,k}\subseteq \left\{\vec x\in \mathcal{C}_{L,k},   A_k\cup \varphi_l\big(\Omega_k\cap A_k^c\big)\subseteq {\rm Trace}(\vec x)\right\}
\eeq
since $A_k\subseteq A_0$, $\Omega_k\cap A_k^c\subseteq B_0$. Finally, define 
$$
V_{k,i}=\left\{ j: \omega_j\in \Omega_k\cap {\rm Trace}\left({\rm arc}\big(\vec x_{k,+}, n_{k,i}\big)\right)\right\},  \ i=1,2,\cdots, m_k
$$
and $\vec V_k=V_{k,1}\otimes V_{k,2}\otimes\cdots\otimes V_{k,m_k}$. Then by the constructions above we have for any $\omega_j\in \Omega_k$, $\omega_j\in {\rm Trace}(\vec x)$ if and only if there exists some $i$ such that $j\in V_{k,i}$ and $f(\vec x)[i]=1$. And similarly $\varphi_l(\omega_j)\in {\rm Trace}(\vec x)$ if and only if there exists some $i$ so that $j\in V_{k,i}$ and $f(\vec x)[i]=-1$. In combination, for any $\omega_j\in \Omega_k\cup \varphi_l(\Omega_k)$, $\omega_j\in {\rm Trace}(\vec x)$ if and only if there exists some $i$ such that $j\in f(\vec x)[i]V_{k,i}$. 
Then taking the intersections and letting $\bar \Omega_k=\{1,2,\cdots, n_k\}$ and 
$$
\bar A_k=\{j: \omega_j\in A_k\},
$$
we have
\beq
\label{number of paths5}
\left\{\vec x\in \mathcal{C}_{L,k},  \Omega_k\subseteq {\rm Trace}(\vec x)\right\}=\left\{\vec x\in \mathcal{C}_{L,k},  \ \bar\Omega_k\subseteq f(\vec x)\cdot\vec V_k\right\}
\eeq
and
\beq
\label{number of paths6}
\begin{aligned}
&\left\{\vec x\in \mathcal{C}_{L,k},   A_k\cup \varphi_l\big(\Omega_k\cap A_k^c\big)\subseteq {\rm Trace}(\vec x)\right\}\\
& \ \ \ \ =\left\{\vec x\in \mathcal{C}_{L,k}, \  -(\bar\Omega_k\cap \bar A_k^c)\cup \bar A_k\subseteq f(\vec x)\cdot\vec V_k\right\}.
\end{aligned}
\eeq
Noting that the mapping $f$ is a bijection between $\mathcal{C}_{L,k}$ and $\{-1,1\}^{m_k}$, 
\beq
\label{number of paths7}
\left|\left\{\vec x\in \mathcal{C}_{L,k},  \ \bar\Omega_k\subseteq f(\vec x)\cdot\vec V_k\right\} \right|=|\mathcal{C} (\vec V_k,\bar\Omega_k)|
\eeq
and 
\beq
\label{number of paths8}
\left|\left\{\vec x\in \mathcal{C}_{L,k}, \  -(\bar\Omega_k\cap \bar A_k^c)\cup \bar A_k\subseteq f(\vec x)\cdot\vec V_k\right\}\right|=|\mathcal{C}(\vec V_k,\bar A_k)|.
\eeq
Apply Lemma \ref{Lemma Combinatorial Inequalities} on $\bar\Omega_k$, $\vec V_k$ and $\bar A_k$. The proof of this theorem is complete. \qed

\begin{remark}
With exactly the same argument, we can also have the reflection theorem on reflecting over a line $y=x+n$, $n\ge 1$ and $x=-y+n$. 
\end{remark}

\section{Path Maximizing Covering Probability}
To apply Theorem \ref{theorem 2D random walk reflection} specifically on covering the trace of a nearest neighbor path in $\CP\subset\ZZ^2$ connecting 0 and $\partial B_1(0,N)$, we can assume without loss of generality that $P_K\in\partial B_1(0,N)$ in the first quadrant. For each such path with at least one point at $(x,y)$ such that $|x-y|\ge 2$, we can always reflect it around $l$ equals to $x=y+1$ or $y=x+1$ with the part of the path on the same side of $l$ as $0$ to be $A_0$ and the remaining points to be $\varphi_l(B_0)$ and always increase the probability of covering.

 Note that, after the reflection, $A_0\cup B_0$ is the trace of another nearest neighbor path, and we can reduce the total difference 
$$
\sum |x_i-y_i|
$$
by at least one in each step. After a finite number of steps, we will end up with a nearest neighbor path that stays within $\{|x-y|\le 1\}$. And among all those such paths, applying Theorem \ref{theorem 2D random walk reflection} for reflection over $x=y$, we can show that the covering probability is maximized when we move all the ``one step corners" to the same side of the diagonal, which itself gives us a monotonic path that stays within distance one above or below the diagonal. Thus we have the theorem as follows. 

\begin{theorem}
\label{theorem 2D random walk}
For each integers $L\ge N\ge 1$, let $\mathcal{P}$ be any nearest neighbor path in $\ZZ^2$connecting 0 and $\partial B_1(0,N)$. $X_n, n\ge 0$ be a 2 dimensional simple random walk starting at 0. Then
$$
P\big({\rm Trace}(\mathcal{P})\subseteq {\rm Trace}(X_0,\cdots, X_L)\big)\le P\big(\overset{\nearrow}{\mathcal{P}}\subseteq {\rm Trace}(X_0,\cdots, X_L)\big)
$$
where 
$$
\overset{\nearrow}{\mathcal{P}}=\Big((0,0),(0,1),(1,1),(1,2),\cdots,([N/2],N-[N/2]) \Big).
$$
\end{theorem}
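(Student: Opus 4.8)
The plan is to connect $\mathcal P$ to $\overset{\nearrow}{\mathcal P}$ by a finite chain of reflections, each of the type covered by Theorem~\ref{theorem 2D random walk reflection} (or by the variant for lines $y=x+n$ noted in the remark following it), each not decreasing the covering probability, and ending at a set that contains ${\rm Trace}(\overset{\nearrow}{\mathcal P})$; since $A\subseteq B$ implies $P\big(B\subseteq{\rm Trace}(X_0,\dots,X_L)\big)\le P\big(A\subseteq{\rm Trace}(X_0,\dots,X_L)\big)$, chaining the inequalities gives the theorem. It is cleanest to run the argument at the level of finite subsets of $\ZZ^2$ (the intermediate objects need never be paths) and in the rotated coordinates $u=x+y$, $v=x-y$: every step of a nearest neighbor path flips exactly one of $x,y$ by $\pm1$, hence changes both $u$ and $v$ by $\pm1$; the reflection across a line $\{v=c\}$ fixes $u$ and sends $v\mapsto 2c-v$; and $\sum|x_i-y_i|=\sum|v_i|$. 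Using the symmetry of simple random walk under permuting and negating coordinates I would first reduce to $P_K$ lying in the first quadrant, so that $u_0=0$ and $u_K=\|P_K\|_1=N$; since $u$ moves by unit steps, ${\rm Trace}(\mathcal P)$ meets every level $\{u=j\}$, $0\le j\le N$, and this property is preserved by every reflection across a $\{v=c\}$ line, precisely because such a reflection preserves $u$.

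\emph{Phase 1: folding into the diagonal strip.} As long as the current set $S$ has a point with $v\ge2$, replace $S$ by $S'=(S\cap\{v\le1\})\cup\varphi_l\big(S\cap\{v\ge2\}\big)$ with $l:\{v=1\}$. Taking $A_0=S\cap\{v\le1\}$ and $B_0=\varphi_l(S\cap\{v\ge2\})$, both subsets of $\{x\le y+1\}$, one has $A_0\cup\varphi_l(B_0)=S$ and $A_0\cup B_0=S'$, so Theorem~\ref{theorem 2D random walk reflection} gives $P(S\subseteq{\rm Trace})\le P(S'\subseteq{\rm Trace})$. A point with $v=c\ge2$ becomes a point with $v=2-c$, lowering $|v|$ by at least $2$; running the symmetric move across $\{v=-1\}$ whenever some point has $v\le-2$, the nonnegative integer $\sum|v_i|$ strictly decreases at each step, so after finitely many reflections we reach a set $S_1\subseteq\{|v|\le1\}=\{|x-y|\le1\}$ that still meets every level $\{u=j\}$, $0\le j\le N$, and has covering probability at least that of ${\rm Trace}(\mathcal P)$.

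\emph{Phase 2: moving the corners to one side of the diagonal.} In the strip every point has $v\in\{-1,0,1\}$ with $v\equiv u\pmod2$. Reflect the points with $v=+1$ across the diagonal $\{v=0\}$: with $A_0=S_1\cap\{v\le0\}$ and $B_0=\varphi(S_1\cap\{v=1\})\subseteq\{v=-1\}\subseteq\{x\le y\}$, Theorem~\ref{theorem 2D random walk reflection} again gives that the covering probability does not decrease, and the new set $S_2$ has all points with $v\in\{-1,0\}$ while still meeting every level $\{u=j\}$, $0\le j\le N$. On such a level the only point with $v\in\{-1,0\}$ is $(u,v)=(j,0)$ if $j$ is even and $(j,-1)$ if $j$ is odd, i.e., in $(x,y)$ coordinates $\big(\tfrac j2,\tfrac j2\big)$, respectively $\big(\tfrac{j-1}2,\tfrac{j+1}2\big)$; hence $S_2\supseteq\{(k,k):2k\le N\}\cup\{(k,k+1):2k+1\le N\}={\rm Trace}(\overset{\nearrow}{\mathcal P})$, which finishes the proof.

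\emph{Where the work is.} Once the $(u,v)$ picture is fixed the argument is conceptually short; the effort is the bookkeeping of Phase~1: matching the fold $S\mapsto S'$ exactly to the hypotheses of Theorem~\ref{theorem 2D random walk reflection} (checking $A_0\cup\varphi_l(B_0)=S$, that $A_0,B_0$ sit on the correct side of $l$ and may be made disjoint at the cost of discarding $A_0\cap B_0$, and that the $\{v=-1\}$ analogue is exactly the remark's variant), and verifying that $\sum|v_i|$ genuinely decreases at \emph{every} step even though a fold across $\{v=1\}$ can manufacture new points with $v\le-2$ (which a later fold across $\{v=-1\}$ then removes). If one instead prefers to keep everything at the level of nearest neighbor paths, as the discussion preceding the statement does, the additional check is that reflecting only the outer arcs across $\{v=c\}$ — via the map $\varphi_{l,D}$ from the proof of Theorem~\ref{theorem 2D random walk reflection} — again yields a nearest neighbor path from $0$, which holds because consecutive arcs are glued at points of $\{v=c\}$, all of them fixed by the reflection; and that $\mathcal P$ possibly wandering out of the first quadrant causes no harm, which is automatic since all the reflections act only on $v$ and both the termination and the identification of the limit use only $v$ and the preserved $u$-levels.
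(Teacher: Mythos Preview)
Your proof is correct and follows the same two-phase strategy as the paper: fold into the strip $\{|x-y|\le1\}$ by repeated reflections across $x=y\pm1$, using $\sum|x_i-y_i|$ as a strictly decreasing potential (the paper's Lemma~\ref{lemma difference 1}), then one more reflection across $x=y$ to align the off-diagonal points. Your use of $(u,v)$ coordinates and the decision to work with sets rather than paths streamlines the bookkeeping---the paper instead carries the path structure via the equivalence-class representative and proves a separate lemma characterizing paths in the strip---but the argument is otherwise identical.
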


\begin{proof}
As outlined above, we first show that 
\begin{lemma}
\label{lemma difference 1}
For each integers $L\ge N\ge 1$, let 
$$
\mathcal{P}=\Big((x_0,y_0),\cdots, (x_K,y_K) \Big)
$$
be any nearest neighbor path in $\ZZ^2$connecting 0 and $\partial B_1(0,N)$ with length $K+1\ge N+1$ where there is an $i\le n$ such that $|x_i-y_i|\ge 2$ and where $x_K\ge 0, y_K\ge 0$. $X_n, \ n\ge 0$ be a 2 dimensional simple random walk starting at 0. Then there exists a nearest neighbor  path $\mathcal{P}_1$ that keep staying within $\{|x-y|\le 1\}$ such that 
$$
P\big({\rm Trace}(\mathcal{P})\in {\rm Trace}(X_0,\cdots, X_L)\big)\le P\big({\rm Trace}(\mathcal{P}_1)\in {\rm Trace}(X_0,\cdots, X_L)\big).
$$
\end{lemma}
\begin{proof}
For any path $\mathcal{Q}$ with lenght $K+1$, define its {\bf total difference} as 
\beq
\label{total difference}
D_T(\mathcal{Q})=\sum_{(x_i,y_i)\in {\rm Trace}(\mathcal{Q})} |x_i-y_i|. 
\eeq
For each such path in this lemma, without loss of generality we can always assume there is some $i$ such that $x_i-y_i\ge 2$. Consider the line of reflection $l: x=y+1$ (otherwise consider $l: y=x+1$). It is easy to see that there is at least one point along this path on the right side of $l$. I.e.
$$
B_0'={\rm Trace}(\mathcal{P})\cap \{x>y+1\}\not=\O. 
$$
Define $A_0={\rm Trace}(\mathcal{P})\cap B_0^c$, $B_0=\varphi_{l}(B_0')\cap A_0^c$, and $\hat B_0'=\varphi_{l}(B_0)\subseteq B_0'$. Thus we have ${\rm Trace}(\mathcal{P})=A_0\cup B_0'$ and 
\beq
\label{difference 21}
P\big(A_0\cup B_0'\subseteq {\rm Trace}(X_0,\cdots, X_L)\big)\le P\big(A_0\cup \hat B_0'\subseteq {\rm Trace}(X_0,\cdots, X_L)\big).
\eeq
Applying Theorem  \ref{theorem 2D random walk reflection} on $A_0$ and $B_0$ gives us
\beq
\label{difference 22}
P\big(A_0\cup \hat B_0'\subseteq {\rm Trace}(X_0,\cdots, X_L)\big)\le P\big(A_0\cup B_0\subseteq {\rm Trace}(X_0,\cdots, X_L)\big).
\eeq
Then noting that $\mathcal{P}$ is a nearest neighbor path starting at 0 with length $K+1$, let $C_{K,i}$ be the equivalence class it belongs to under the relation $\sim$, and let $\mathcal{P}'=\vec x_i$ be the representing element of $C_{K,i}$ where all arcs are reflected to the left of $l$. Then it is easy to see that 
\beq
\label{difference 23}
{\rm Trace}(\mathcal{P}')=A_0\cup \varphi_{l}(B_0')= A_0\cup B_0.
\eeq
Combine \eqref{difference 21}-\eqref{difference 23}, 
\beq
\label{difference 2}
P\big(\mathcal{P}\subseteq {\rm Trace}(X_0,\cdots, X_L)\big)\le P\big(\mathcal{P}'\subseteq {\rm Trace}(X_0,\cdots, X_L)\big).
\eeq
Then note that for any $j$ such that $x_j-y_j\ge 2$,
$$
\varphi_{l}(x_j,y_j)=(y_i+1,x_j-1)\in \ZZ^2
$$
while 
$$
\|(x_j,y_j)\|_1\ge \|(y_i+1,x_j-1)\|_1
$$
for all $x_i\ge y_i+2$. Since $(x_K,y_K)$ is in the first quadrant, if in addition we also have $x_K\ge y_K+1$, then $\varphi_{l}(x_K,y_K)$ remains in the first quadrant with $\|\varphi_{l}(x_K,y_K)\|_1=N$. Thus the new nearest neighbor path $\CP'$ is also one connecting 0 and $\partial B_1(0,N)$. Moreover, 
$$
D_T(\mathcal{P})=\sum_{j:(x_j,y_j)\in B_0'}(x_j-y_j)+ \sum_{j:(x_j,y_j)\in A_0}|x_j-y_j|,
$$
while
$$
D_T(\mathcal{P}')=\sum_{j:(x_j,y_j)\in \hat B_0'}(x_j-y_j-2)+ \sum_{j:(x_j,y_j)\in A_0}|x_j-y_j|,
$$
which shows that $D_T(\mathcal{P}')\le D_T(\mathcal{P})-2$. Then if there is a point $(x_j',y_j')$ in the new path $\mathcal{P}'$ with $|x_j'-y_j'|\ge 2$, we can repeat the previous process and the covering probability is non-decreasing. Noting that for each time we decrease the total difference by at least 2 while $D_T(\mathcal{P})$ is a finite number, after repeating a finite number of times, we must end up with a nearest neighbor path where no point satisfies $|x-y|\ge 2$. Thus, we find a nearest neighbor path $\mathcal{P}_1$ connecting 0 and $\partial B_1(0,N)$ that keep staying within $\{|x-y|\le 1\}$  with a higher covering probability. 
\end{proof}
 With Lemma \ref{lemma difference 1}, note that for any nearest neighbor path connecting 0 and $\partial B_1(0,N)$ that keeps staying within $\{|x-y|\le 1\}$, we can always look at the part of it after its last visit to 0 and it has a higher covering probability. And note that such part has to contain a monotonic path from 0 to $\partial B_1(0,N)$. Thus it is sufficient to show that $\overset{\nearrow}{\mathcal{P}}$ has the highest covering probability over all nearest neighbor monotonic paths $\mathcal{P}_1$ connecting 0 and $\partial B_1(0,N)$ that keeps staying within $\{|x-y|\le 1\}$. We can show this by specifying what the trace of each such path looks like. Letting $N_0=N-[N/2]$ and $\bar\Omega=\{1,2,\cdots, N_0\}$. We have
\begin{lemma} 
For each nearest neighbor path $\mathcal{P}_1$ connecting 0 and $\partial B_1(0,N)$that keeps staying within $\{|x-y|\le 1\}$, we must have that 
$$
(j,j)\in {\rm Trace}(\mathcal{P}_1),  \ \forall j=1,2,\cdots, [N/2]
$$
and that 
$$
(j-1,j) {\rm \ or \ } (j,j-1) \in  {\rm Trace}(\mathcal{P}_1),  \ \forall j=1,2,\cdots, N_0. 
$$
\end{lemma}
\begin{proof}
The proof follows easy from the fact that $\mathcal{P}_1$ is a nearest neighbor path. For any such path
$$
\mathcal{P}_1=\Big((x_0,y_0),\cdots, (x_K,y_K) \Big)
$$
Since $\mathcal{P}_1$ is a nearest neighbor path between 0 and $\partial B_1(0,L)$ that stays within $\{|x-y|\le 1\}$, thus for any $n\le N$ there must be at least one point belongs to the trace of $\mathcal{P}_1$ with $L_1$ norm $n$. Then note that in  $\{|x-y|\le 1\}$ the only integer point with $L_1$ norm $2j$ is $(j,j)$, while the only two points with $L_1$ norm $2j-1$ are $(j-1,j)$ and $(j,j-1)$. Thus the proof is complete. 
\end{proof}
 With the lemma above, for each such $\mathcal{P}_1$ define 
$$
\bar A=\big\{j: (j-1,j)\in{\rm Trace}(\mathcal{P}_1)\big\}\subseteq \bar\Omega
$$
and 
$$
\bar B=\big\{j: (j,j-1)\in{\rm Trace}(\mathcal{P}_1)\big\}\subseteq \bar\Omega.
$$
Then we have $\bar A\cup \bar B=\bar \Omega$, so that
$$
\mathcal{P}_1\supseteq\{(j,j), \ j=0,1,\cdots [N/2]\}\cup \{(j-1,j), \ j\in \bar A \}\cup \{(j,j-1), \ j\in\bar\Omega\cap \bar A^c \}.
$$
Define 
$$
A_0=\{(j,j), \ j=0,1,\cdots [N/2]\}\cup \{(j-1,j), \ j\in \bar A \}, \ B_0=\varphi_{l_0}(\{(j,j-1), \ j\in\bar\Omega\cap \bar A^c \}),
$$
where $l_0$ is the line $x=y$. Then 
$$
P\big({\rm Trace}(\mathcal{P}_1)\subseteq {\rm Trace}(X_0,\cdots, X_L)\big)\le P\big(A_0\cup \varphi_{l_0}(B_0)\subseteq {\rm Trace}(X_0,\cdots, X_L)\big).
$$
And note that
$$
A_0\cup B_0=\Big((0,0),(0,1),(1,1),(1,2),\cdots,([N/2],N-[N/2]) \Big)=\overset{\nearrow}{\mathcal{P}},
$$
which itself gives a monotonic nearest neighbor path connecting 0 and $\partial B_1(0,N)$. So Theorem  \ref{theorem 2D random walk reflection} finishes the proof. 
\end{proof}

\

 For fixed $N$, the inequality in Theorem \ref{theorem 2D random walk} gets equal when $L=\infty$ since the 2 dimensional simple random walk is recurrent and both probabilities goes to one. However, we can easily generalize the same result to higher dimensions. This will similarly give us Theorem \ref{theorem D random walk}.

\

\begin{proof}[Proof of Theorem \ref{theorem D random walk}]
This theorem can be proved by reflecting only on two coordinates in $\ZZ^d$ while keeping all the others unchanged. For any $n\ge 0$, we look at, without loss of generality, the subspace $l: a_1=a_2+l_0,  \ l_0\ge 0$ when $d\ge 3$, and define reflection $\varphi_l$ over $l$ as follows: for each point $(a_1,\cdots, a_d)\in \ZZ^d$, 
$$
\varphi_l(a_1,\cdots, a_d)=(a_2+l_0,a_1-l_0, a_3,\cdots, a_d). 
$$
Then for all paths in $\mathcal{N}_L$ (all nearest neighbor paths starting at $0$ of length $L+1$), we can again define $T_0=0$, $T_1=\inf\{n: x_n\in l\}$, and
$$
T_n=\inf\{n\ge T_{n-1}: x_n\in l\}
$$
for each integer $n\in [2,L]$ to be the time of the $n$th visit to subpsace $l$, and divide $\mathcal{N}_L$ into a partition of equivalence classes under $\varphi_{l,D}$ for all $D\in \{-1,1\}^L$. Then for each pair of disjoint finite subsets $A_0, B_0\in \{ x\le y+l_0\}$, let
$$
\mathcal{N}_{L,1}=\left\{\vec x\in \mathcal{N}_L, A_0\cup B_0\subseteq{\rm Trace}\big(\vec x\big)  \right\},
$$
and
$$
\mathcal{N}_{L,2}=\left\{\vec x\in \mathcal{N}_L, A_0\cup \varphi_l(B_0)\subseteq{\rm Trace}\big(\vec x\big)  \right\}.
$$
For each equivalence class $ \mathcal{C}_{L,k}$ as above, the exact same argument as in the proof of Theorem \ref{theorem 2D random walk reflection} guarantees that
$$
\left|\mathcal{N}_{L,1}\cap \mathcal{C}_{L,k}\right|\ge \left|\mathcal{N}_{L,2}\cap \mathcal{C}_{L,k}\right|. 
$$
So again we have 
\beq
\label{reflection d}
\begin{aligned}
&P\left(A_0\cup B_0\subseteq{\rm Trace}\big(\{X_n\}_{n=0}^L\big) \right)\\
&  \ \ \ \ \ge P\left(A_0\cup \varphi_l(B_0)\subseteq{\rm Trace}\big(\{X_n\}_{n=0}^L\big) \right).
\end{aligned}
\eeq
Then apply \eqref{reflection d} to any nearest neighbor path connecting 0 and $\partial B_1(0,N)$
$$
\mathcal{P}=(P_0,P_1,P_2,\cdots,P_L)
$$
And without loss of generality we can also assume that $P_L\in (\ZZ^+\cup\{0\})^d$. Let the subspace of reflection be $l: a_1=a_2+1$,  
$$
A_0={\rm Trace}(\mathcal{P})\cap \{a_1\le a_2+1\}, \  B_0'={\rm Trace}(\mathcal{P})\cap \{a_1> a_2+1\}.
$$
and
$$
B_0=\varphi_l(B_0')\cap A_0^c, \ \ \hat B_0'=\varphi_l(B_0).
$$
Without loss of generality we can assume $B_0'$ is not empty, note that ${\rm Trace}(\mathcal{P})=A_0\cup B_0'$, and that similar to the proof of Theorem \ref{theorem 2D random walk}, we can again let $\mathcal{P}'$ be the representing element in the equivalence class under $\sim$ that contains $\mathcal{P}$, which is another nearest neighbor path connecting 0 and $\partial B_1(0,N)$ where all the arcs are reflected to the same side of $l$ as 0. Then ${\rm Trace}(\mathcal{P}')=A_0\cup B_0$, and ${\rm Trace}(\mathcal{P})=A_0\cup B_0'\supseteq A_0\cup \hat B_0'$. By \eqref{reflection d} we have 
\beq
\label{reflection d1}
P\big({\rm Trace}(\mathcal{P})\in {\rm Trace}(X_0,\cdots, X_L)\big)\le P\big(\mathcal{P}'\in {\rm Trace}(X_0,\cdots, X_L)\big). 
\eeq
Moreover, define 
$$
\begin{aligned}
D_T(\mathcal{P})=\sum_{P_n\in {\rm Trace}(\mathcal{P})} \sum_{i,j\le d} |p_{n,i}-p_{n,j}|
\end{aligned}
$$ 
be the total difference of $\mathcal{P}$. Then note that for each $n$
$$
\sum_{i,j\le d} |p_{n,i}-p_{n,j}|=|p_{n,1}-p_{n,2}|+f_n(p_{n,1})+f_n(p_{n,2})+\sum_{3\le i,j\le d} |p_{n,i}-p_{n,j}|
$$
where 
$$
f_n(p)=\sum_{i=3}^d|p-p_{n,i}|
$$
which is a convex function of $p$. Thus, we rewrite 
$$
D_T(\mathcal{P})=\sum_{P_n\in A_0} \sum_{i,j\le d} |p_{n,i}-p_{n,j}|+\sum_{P_n\in B_0'} \sum_{i,j\le d} |p_{n,i}-p_{n,j}|
$$
and
$$
D_T(\mathcal{P}')=\sum_{P'_n\in A_0} \sum_{i,j\le d} |p'_{n,i}-p'_{n,j}|+\sum_{P'_n\in B_0} \sum_{i,j\le d} |p'_{n,i}-p'_{n,j}|
$$
For each $n$ such that $P'_n=(p_{n,1},\cdots,p_{n,d})\in A_0$, we always have 
$$
\sum_{i,j\le d} |p_{n,i}-p_{n,j}|=\sum_{i,j\le d} |p'_{n,i}-p'_{n,j}|.
$$
Otherwise, we must have $P'_n\in B_0$ and there must always be a $P_n=\varphi_l(P_n')\in \hat B_0'\subseteq B_0'$, which implies that 
$$
\sum_{i,j\le d} |p'_{n,i}-p'_{n,j}|=|p'_{n,1}-p'_{n,2}|+ f_n(p'_{n,1})+f_n(p'_{n,2})+\sum_{3\le i,j\le d} |p_{n,i}-p_{n,j}|.
$$
And since that $P_n\in B_0'$, $p_{n,1}\ge p_{n,2}+2$, so that for $p'_{n,1}=p_{n,2}+1$ and $p'_{n,2}=p_{n,1}-1$, we must have 
\beq
\label{d3}
\max\{p_{n,1},p_{n,2}\}>\max\{p'_{n,1},p'_{n,2}\},  \ \min\{p_{n,1},p_{n,2}\}<\min\{p'_{n,1},p'_{n,2}\},
\eeq
which implies that $|p'_{n,1}-p'_{n,2}|<|p_{n,1}-p_{n,2}|$. Then combining \eqref{d3}, and that $p'_{n,1}+p'_{n,2}=p_{n,1}+p_{n,2}$ with the fact that $f_n(p)$ is convex, we also have 
$$
 f_n(p'_{n,1})+f_n(p'_{n,2})\le f_n(p_{n,1})+f_n(p_{n,2})
$$
which further implies that $D_T(\mathcal{P})\ge D_T(\mathcal{P}')+1$. Again, noting that $D_T(\mathcal{P})$ itself is finite, so after at most a finite number of iterations, we will end up with a path $\mathcal{P}_1$ connecting 0 and $\partial B_1(0,N)$ within the region 
$$
R=\left\{(a_1,a_2,\cdots,a_d)\in \ZZ^d, \ \max_{i,j\le d}|a_i-a_j|\le 1\right\}. 
$$
And it is easy to see that for each point $\vec a_0=(a_{1,0},a_{2,0},\cdots,a_{d,0})$ in region $R$ and each subspace $l: a_{i}=a_{j}$, $\vec a_0'=\varphi_l(\vec a_0)$ must satisfy 
\beq
\label{reflection 1 over 0}
a'_{k,0}=\left\{
\begin{aligned}
&a_{k,0},\ \ {\rm if \ } k\not= i,j\\
&a_{j,0},\ \ {\rm if \ } k= i\\
&a_{i,0},\ \ {\rm if \ } k=j.
\end{aligned}
\right.
\eeq

\

Thus, apply the same argument as in the proof of Theorem \ref{theorem 2D random walk}: reflection over $a_2=a_1$, which reflects points in $\{a_2=a_1+1\}$ to $\{a_1=a_2+1\}$, and then for  reflections over $a_3=a_1, \ \cdots$ and then $a_d=a_1$. We will have a sequence of paths $\mathcal{P}_{2,i}, \ i=2,\cdots d$ in $R$ with covering probabilities that never decrease. Moreover, by the definition of our reflections, for each $n\le K$ and $2\le j\le d$ we have that $\{p_{2,i,n,1}\}_{i=2}^d$ is nondecreasing while $\{p_{2,i,n,j}\}_{i=2}^d$ is nonincreasing, and that 
$$
p_{2,j,n,1}\ge p_{2,j,n,j}, \ \forall 2\le j\le d.
$$
Thus for $\mathcal{P}_{2}=\mathcal{P}_{2,d}$, we must have 
\beq
\label{maximum 1}
p_{2,n,1}\ge \max_{2\le j\le d}p_{2,n,j}
\eeq
for all $n\le K$. Then we reflect $\mathcal{P}_{2}$ over $a_3=a_2$, $a_4=a_2, \ \cdots$, and $a_d=a_2$ which also gives us a sequence of paths $\mathcal{P}_{3,i}, \ i=3,\cdots d$ in $R$ with covering probabilities that never decrease. Letting $\mathcal{P}_{3}=\mathcal{P}_{3,d}$, similarly we must have 
\beq
\label{maximum 2}
p_{3,n,2}\ge \max_{3\le j\le d}p_{3,n,j}.
\eeq
Moreover recalling the formulas of reflections within $R$ in \eqref{reflection 1 over 0}, all reflections over  $a_i= a_2$, $i\ge 3$ will not change $\max_{2\le j\le d}p_{2,n,j}$ for any $n$. Thus, we still have \eqref{maximum 1} holds for $\mathcal{P}_{3}$. Repeating this process and we will have a sequence $\mathcal{P}_{4}, \mathcal{P}_{5},\cdots, \mathcal{P}_{d}$  with covering probabilities that never decrease, where each of them stays within $R$. And finally for $\mathcal{P}_{d}$, we must have 
\beq
\label{maximum i}
p_{d,n,i}\ge \max_{i+1\le j\le d}p_{d,n,j},
\eeq
for all $i\le d-1, \ n\le N.$ Noting again that $\mathcal{P}_{d}$ is a nearest neighbor path, then
$$
{\rm Trace}(\mathcal{P}_{d})\supseteq \mathcal{P}_{0}
$$
and the proof of this Theorem is complete. 
\end{proof}

\section{Proof of Theorem \ref{Theorem Main}}
 With Theorem \ref{theorem D random walk}, the proof of Theorem \ref{Theorem Main} follows immediately from the fact that the simple random walk on $\RR^d, \ d\ge 4$ returns to the one dimensional line $x_1=x_2=\cdots=x_d$ with probability less than 1. Note that for any nearest neighbor path $\mathcal{P}=(P_0,P_1,\cdots, P_N)$ and $\{X_n\}_{n=0}^\infty$ connecting $0$ and $\partial B_1(0,N)$ which is a $d$ dimensional simple random walk, letting 
$$
\mathcal{Q}=\left\{0, \sum_{i=1}^{d}e_i, 2\sum_{i=1}^{d}e_i, \cdots, [N/d]\sum_{i=1}^{d}e_i \right\}
$$
be the points in $\overset{\nearrow}{\mathcal{P}}$ on the diagonal, we always have by Theorem \ref{theorem D random walk},
\begin{align*}
P\left({\rm Trace}(\mathcal{P})\subseteq {\rm Trace}\big(\{X_n\}_{n=0}^\infty\big)\right) &\le P\left(\overset{\nearrow}{\mathcal{P}}\subseteq {\rm Trace}\big(\{X_n\}_{n=0}^\infty\big)\right) \\
&\le P\left(\mathcal{Q}\subseteq {\rm Trace}\big(\{X_n\}_{n=0}^\infty\big)\right). 
\end{align*}
Moreover, let $\{\tau_n\}_{n=1}^\infty$ be sequence of stopping times of all visits to diagonal line $\ell: x_1=x_2=\cdots= x_d$. Then
\beq
\label{asymptotic 1}
P\left(\mathcal{Q}\subseteq {\rm Trace}\big(\{X_n\}_{n=0}^\infty\big)\right)\le P(\tau_{[N/d]}<\infty). 
\eeq
To bound the probability on the right hand side of \eqref{asymptotic 1}. Define a new Markov process $\{Y_n\}_{n=0}^\infty\in \ZZ^d$, where 
$$
Y_n=(x_{n,1}-x_{n,2}, x_{n,2}-x_{n,3},\cdots, x_{n,d-1}-x_{n,d}).
$$
Note that we can also write $\tau_n=\inf\{n>\tau_{n-1}, Y_n=0\}$ and that $Y_n$ itself is a $d-1$ dimensional random walk with generator 
$$
\begin{aligned}
\mathcal{L}f(y)=&\frac{1}{2d}\sum_{i=1}^{d-1}f(y+e_i-e_{i+1})+f(y-e_1+e_{i+1})\\
&+f(y+e_1)+f(y-e_1)++f(y+e_d)+f(y-e_d)-f(y).
\end{aligned}
$$
With $d-1\ge 3$, we have $P(\tau_n<\infty|\tau_{n-1}<\infty)=P_d=1-G_Y(0)^{-1}<1$. And thus the proof of Theorem \ref{Theorem Main} complete. \qed

%
%

\section{Discussions}
In this section we discuss the conjectures and show numerical simulations.
\subsection{Covering Probabilities with Repetitions} 
In the proof of Theorem \ref{theorem D random walk}, note that each time we apply Theorem \ref{theorem 2D random walk reflection} and get a new path $\mathcal{P}'$ with higher covering probability, we always have 
$$
{\rm Trace}(\mathcal{P})=A_0\cup B_0'
$$
and
$$
{\rm Trace}(\mathcal{P}')=A_0\cup B_0
$$
where $B_0=\varphi_l(B_0')\cap A_0^c\subseteq \varphi_l(B_0')$. This, together with the fact that $A_0$ is disjoint with both $B_0$ and $B_0'$, implies that 
$$
|{\rm Trace}(\mathcal{P})|=|A_0|+|B_0'|\ge |A_0|+|B_0|=|{\rm Trace}(\mathcal{P}')|.
$$
In words, although the length of the path remains the same after reflection, the size of its trace may decrease. In fact, for any simple path connecting 0 and $\partial B_1(0,N)$, at the end of our sequence of reflections, we will always end up with a (generally non-simple) path whose trace is of size $N+1$. 

%

\

 One natural approach towards a sharper upper bound is taking the repetitions of visits in a non-simple path into consideration. For any path 
$$
\mathcal{P}=\big(P_0, P_1,\cdots, P_K \big)
$$
starting at 0 which may not be simple, and any point $P\in {\rm Trace}(\mathcal{P})$, we can define the $k$th visit to $P$ as $T_{1,P}=\inf\{n: P_n=P\}$ and
$$
T_{k,P}=\inf\{n> T_{k-1}: P_n=P\}
$$
with convention $\inf\{\O\}=\infty$. Then we can define the repetition of $P\in {\rm Trace}(\mathcal{P})$ in the path $\mathcal{P}$ as 
\beq
\label{repetition}
n_{P,\mathcal{P}}=\sup\{k: T_{k,P}<\infty\}
\eeq
and denote the collection of all such repetitions as $N_\mathcal{P}=\{n_{P,\mathcal{P}}: P\in  {\rm Trace}(\mathcal{P})\}$. It is to easy to see that $n_{P,\mathcal{P}}\equiv 1$ for all $P\in {\rm Trace}(\mathcal{P})$ when $\mathcal{P}$ is a simple path, and that 
$$
\sum_{P\in {\rm Trace}(\mathcal{P})}n_{P,\mathcal{P}}=K+1. 
$$

\

 For $d$ dimensional simple random walk $\{X_n\}_{n=0}^\infty$ starting at 0 and any point $P\in \ZZ^d$, we can again define the stopping times $\tau_{0,P}=0$, $\tau_{1,P}=\inf\{n: X_n=P\}$ and
\beq
\label{n stopping time}
\tau_{n,P}=\inf\{n>\tau_{n-1,P}: X_n=P\}
\eeq
with convention $\inf\{\O\}=\infty$. Then we have 
\begin{definition}
\label{visiting with repetitions}
For each nearest neighbor path $\mathcal{P}$, and $d$ dimensional simple random walk $\{X_n\}_{n=0}^\infty$, we say that $\{X_n\}_{n=0}^L$ covers $\mathcal{P}$ {\bf up to its repetitions} if 
$$
\tau_{n_{P,\mathcal{P}},P}\le L, \forall P\in {\rm Trace}(\mathcal{P}). 
$$
And we denote such event by ${\rm Trace}(\mathcal{P})\otimes N_\mathcal{P}\subseteq \{X_n\}_{n=0}^L$. 
\end{definition}
 Our {\bf hope} was, for any nearest neighbor path $\mathcal{P}$ and subspace of reflection like $l: x_i=x_j+l_0$, the probability of a simple random walk $\{X_n\}_{n=0}^L$ starting at 0 covering $\mathcal{P}$ up to its repetitions {\bf may} be upper bounded by that of covering the path $\mathcal{P}'$ up to its repetitions, where $\mathcal{P}'$ is the representing element in the equivalence class in $\mathcal{N}_K$ containing $\mathcal{P}$ under the reflection $\varphi_l$. In words, $\mathcal{P}'$  is the path we get by making all the arcs in $\mathcal{P}$ reflected to the same side as 0. 
 
 \

 Note that although $\mathcal{P}'$ may not be simple and the size of its trace could decrease, this will at the same time increase the repetition on those points which are symmetric to the disappeared ones correspondingly. In fact, under Definition \ref{visiting with repetitions}, the total number of points our random walk needs to (re-)visit is always 
$$
\sum_{P\in {\rm Trace}(\mathcal{P}')}n_{P,\mathcal{P}'}=K+1. 
$$
So if our previous guess were true, then we will be able to follow the same process as in Section 3 and 4 and end with the same path along the diagonal, but this time with a higher probability of being covered  up to its repetitions. Noting that the path we have in Theorem \ref{theorem D random walk} will visit points exactly on the diagonal line $x_1=x_2=\cdots=x_d$ for $O(K)$ times, the same construction of a $d-1$ dimensional random walk as in Section 5 will give the sharp upper bound we need in Conjecture \ref{sharp conjecture}.

\

 Unfortunately, here we present the counterexample and numerical simulations showing that Theorem \ref{theorem 2D random walk} and \ref{theorem D random walk} no longer holds for of certain non-monotonic paths. The idea of constructing those examples can be seen in the following preliminary model: Let $l$ be the line of reflection and suppose we have one point $x$ on the same side of $l$ as 0. Then suppose there is a equivalence class $\mathcal{C}_{L,k}$ with its representative element $\vec x_k$ having $2n$ arcs each visiting $x$ once. Then we look at the covering probability of $\{x, \varphi_l(x)\}\otimes (n, n)$ and that of its reflection $\{x\}\otimes 2n$. For the first one, we only need to choose $n$ of $2n$ arcs in $\vec x_k$ and reflect them to the other side while keeping the rest unchanged. So we have $\big(^{2n}_n\big)$ configurations. However, for the second covering probability which one may hope to be higher, the only configuration that may give us the covering up to this repetition is  $\vec x_k$ itself. Thus, at least in this equivalence class, the number of configurations covering $\{x, \varphi_l(x)\}\otimes (n, n)$ is larger than that of configurations covering $\{x\}\otimes 2n$.


\

With this idea in mind we give the following counterexample on actual 3 dimensional paths which shows precisely and rigorously that the covering probability is not increased after reflection. 

\noindent {\bf Counterexample 1.} Consider the following points $o=(0,0,0), \ y=(1,0,0)$, $z=(0,1,0), \ w=(1,1,0)\in \ZZ^3$, and paths 
$$
\mathcal{P}=(o,y,w,z)
$$
and 
$$
\mathcal{P}'=(o,y,w,y)
$$
which is the representative element of the equivalence class containing $\mathcal{P}$, under reflection over $l: x_2=x_1$. Let $\{X_n\}_{n=0}^\infty$ be a simple random walk starting at 0. Moreover, we use the notation $A=\ZZ^3\setminus\{y,z,w\}$ and define stopping times $\tau_a=\inf\{n\ge 1: \ X_n=a\}$ for all $a\in \ZZ^3$, and $\tau_A=\inf\{n\ge 1: \ X_n\notin A\}$. Thus we have 
\begin{Proposition}
\label{Counterexample 1}
For the paths $\mathcal{P}$ and $\mathcal{P}'$ defined above, 
\beq
\label{probability 1}
\begin{aligned}
&P\left({\rm Trace}(\mathcal{P})\otimes N_\mathcal{P}\subseteq \{X_n\}_{n=0}^\infty\right)\\
&  \ \ =2P_o(\tau_y=\tau_A)[P_o(\tau_y<\tau_\omega)+P_o(\tau_\omega<\tau_y)]P_o(\tau_y<\infty)\\
& \ \ \ +2P_o(\tau_\omega=\tau_A)P_o(\tau_y<\tau_z)P_o(\tau_y<\infty)\approx 0.08
\end{aligned}
\eeq
which is larger than 
\beq
\label{probability 2}
\begin{aligned}
&P\left({\rm Trace}(\mathcal{P}')\otimes N_{\mathcal{P}'}\subseteq \{X_n\}_{n=0}^\infty\right)\\
&  \ \ =P_o(\tau_y<\tau_\omega)[P_o(\tau_0<\tau_y)+P_o(\tau_y<\tau_0)]P_o(\tau_y<\infty) \\
&  \ \ \ +P_o(\tau_w<\tau_y)P_o(\tau_y<\infty)P_o(\tau_0<\infty)\approx 0.065.
\end{aligned}
\eeq
\end{Proposition}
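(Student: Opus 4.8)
The plan is to evaluate the two covering-up-to-repetitions probabilities exactly and then compare the resulting numbers. First I would unwind the two events. Since ${\rm Trace}(\mathcal{P})=\{o,y,w,z\}$ and every point of $\mathcal{P}$ is visited exactly once, covering $\mathcal{P}$ up to its repetitions is simply the event that the walk started at $o$ eventually visits each of $y$, $w$, $z$. Since ${\rm Trace}(\mathcal{P}')=\{o,y,w\}$ with $n_{y,\mathcal{P}'}=2$ and $n_{o,\mathcal{P}'}=n_{w,\mathcal{P}'}=1$, covering $\mathcal{P}'$ up to its repetitions is the event that the walk visits $w$ and visits $y$ at least twice.

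Next I would expand each event by the strong Markov property, applied at the successive first-hitting times of the relevant finite vertex sets ($\{y,z,w\}$ for the first event; $\{y,w\}$, and then $\{y\}$ for the return to $y$, for the second), and collapse the resulting sum of products using the symmetries of simple random walk: translation invariance ($P_a(\tau_b<\infty)=P_o(\tau_{b-a}<\infty)$, and likewise for ``hit $b$ before $c$'' probabilities), permutations of coordinates --- in particular the reflection $x_1\leftrightarrow x_2$, which fixes $o$ and $w$ and swaps $y\leftrightarrow z$ --- and sign flips of individual coordinates. Concretely: conditioning the first event on which of $y,z,w$ is reached first, then from that vertex on which of the remaining two is reached first, and identifying equal terms via these symmetries, yields identity \eqref{probability 1}; the parallel bookkeeping --- from $y$ the walk must both return to $y$ and reach $w$, while from $w$ it must reach $y$ twice --- yields \eqref{probability 2}. (A small point that makes several of the displayed factors coincide: $P_o(\tau_0<\infty)=1-1/G(0)=P_o(\tau_y<\infty)$, where $G$ is the Green's function of simple random walk on $\ZZ^3$.)

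Then I would reduce each elementary quantity that appears --- the returns $P_o(\tau_y<\infty)$, $P_o(\tau_0<\infty)$; the two-point quantities $P_o(\tau_y<\tau_w)$, $P_o(\tau_w<\tau_y)$, $P_o(\tau_y<\tau_z)$, $P_o(\tau_y<\tau_0)$, $P_o(\tau_0<\tau_y)$; and the three-point quantities $P_o(\tau_y=\tau_A)$, $P_o(\tau_w=\tau_A)$ with $A=\ZZ^3\setminus\{y,z,w\}$ --- to values of $G$. For the returns, $P_o(\tau_x<\infty)=G(x)/G(0)$. For the two- and three-point quantities I would take one first step out of $o$ and then solve the standard linear system: decomposing $G(x,v)$ over the first vertex of the target set $S$ that the walk from $x$ reaches gives, for each $v\in S$, the equation $G(x-v)=\sum_{v'\in S}P_x(\tau_{v'}=\tau_S)\,G(v'-v)$, and inverting this $2\times2$ (resp.\ $3\times3$) system --- whose matrix $[G(v'-v)]_{v',v}$ is the same for every $x$ --- expresses each $P_x(\tau_{v'}=\tau_S)$ in terms of finitely many $G$-values at small lattice vectors ($0$, $\pm e_i$, the diagonal vectors $(\pm1,\pm1,0)$, and a handful more of $L_1$-norm at most $3$), all of which are classical and computable to arbitrary precision from Watson's triple integral together with the recursion $G(x)=\delta_{x,0}+\tfrac16\sum_{|e|=1}G(x-e)$ (here $G(0)\approx1.5164$, $G(e_1)=G(0)-1$). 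Substituting these values into \eqref{probability 1} and \eqref{probability 2}, with honest rounding bounds (equivalently an interval-arithmetic evaluation of this finite computation), gives the quoted $\approx0.08$ and $\approx0.065$, hence the strict inequality asserted in the proposition.

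I expect the main obstacle to be the numerical certification rather than any structural difficulty: there is no soft comparison here (that is precisely the point of the counterexample), and the gap $0.08$ versus $0.065$ is only about $20\%$, so $G$ on $\ZZ^3$ and the associated small linear systems must be evaluated to several correct digits with a rigorous error bound. The secondary source of care is the case analysis of the middle step --- one must not omit or double-count a term, and in particular the repetition constraint for $\mathcal{P}'$ forces one to treat ``return to $y$ before reaching $w$'' and ``reach $w$ before returning to $y$'' separately after the first visit to $y$. Everything else (unwinding the events, reducing to $G$) is routine Markov-chain bookkeeping.
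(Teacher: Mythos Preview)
Your proposal is correct and follows essentially the same route as the paper: decompose each event by the strong Markov property at the first hit to $\{y,z,w\}$ (resp.\ $\{y,w\}$), collapse via the $y\leftrightarrow z$ symmetry and translation invariance, and then express every factor in terms of finitely many Green's-function values. The only cosmetic difference is that the paper computes the two- and three-point hitting probabilities via killed Green's functions $G_{A_i}$ (their Proposition~4.6.2 reference) rather than by inverting the matrix $[G(v'-v)]_{v',v\in S}$ as you describe, but these are the same linear system.
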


 The proof of Proposition \ref{Counterexample 1} is basically a standard application of Green's functions for finite subsets. So we leave the detailed calculations in Appendix \ref{sec:appa}. For anyone who believes in law of larger numbers, we recommend them to look at the following numerical simulation which shows the empirical probabilities (which almost exactly agree with Proposition \ref{Counterexample 1}) of covering both paths with half a million independent paths of 3-dimensional simple random walks run up to $L=40000$.  

\begin{figure}[h]
\center
\includegraphics[width=0.65\columnwidth]{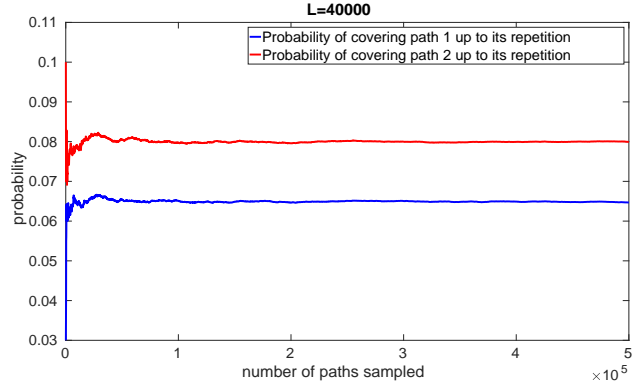} 
\caption{covering probabilities of path 1=$\mathcal{P}$ and path 2=$\mathcal{P}'$, $L=40000$}
\end{figure}

 For a finite length $\{X_n\}_{n=0}^L$ with $L<\infty$, although it is harder to calculate the exact covering probabilities theoretically, the following simulations on $L=4000, 400$ and $40$ show that the inequality in Proposition \ref{Counterexample 1} remains robust for fairly small $L$ (see Figure \ref{fig:counter2}). 

 At last, it is important to note that Proposition \ref{Counterexample 1} does not imply by any means a disproof of Conjecture \ref{sharp conjecture}. What it does show, however, is such a conjecture cannot be proved by the combinatorial argument on reflections shown in this paper.

\subsection{Monotonic Path Minimizing Covering Probability}

In Conjecture \ref{monotonic conjecture}, we conjecture that when concentrating on monotonic paths, the covering probability is minimized when the path takes a straight line along some axis. The intuition is, while all monotonic paths connecting 0 and $\partial B_1(0,N)$ have the same $L_1$ distance, the $L_2$ distance is maximized along the straight line, which makes it the most difficult to cover. This conjecture is supported for small $N$. In the following example, we have $d=3$ and $N=3$. By symmetry of simple random walk, one can easily see that for each monotonic path of length $N+1=4$, starting at 0, the covering probability must equal to that of one of the following five: 
$$
\begin{aligned}
&{\rm path 1: \ } (0,0,0)\to (1,0,0)\to(2,0,0)\to (3,0,0)\\
&{\rm path 2: \ } (0,0,0)\to (1,0,0)\to(2,0,0)\to (2,1,0)\\
&{\rm path 3: \ } (0,0,0)\to (1,0,0)\to(1,1,0)\to (1,2,0)\\
&{\rm path 4: \ } (0,0,0)\to (1,0,0)\to(1,1,0)\to (2,1,0)\\
&{\rm path 5: \ } (0,0,0)\to (1,0,0)\to(1,1,0)\to (1,1,1).\\
\end{aligned}
$$
The following simulation (see Figure \ref{fig:line}) shows that when $L=400$, the covering probability of path 1 is the smallest of them all. It should be easy to use the same calculation in Proposition \ref{Counterexample 1} to show the rigorous result when $L=\infty$.  
\begin{figure}[h!]
\center
\includegraphics[width=0.65\columnwidth]{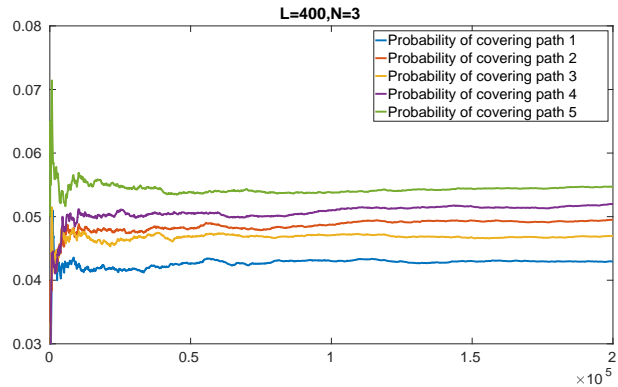} 
\caption{covering probabilities of monotonic paths starting at 0 of length 4\label{fig:line}}
\end{figure}

\begin{figure}[h!]
\center
\includegraphics[width=.65\columnwidth]{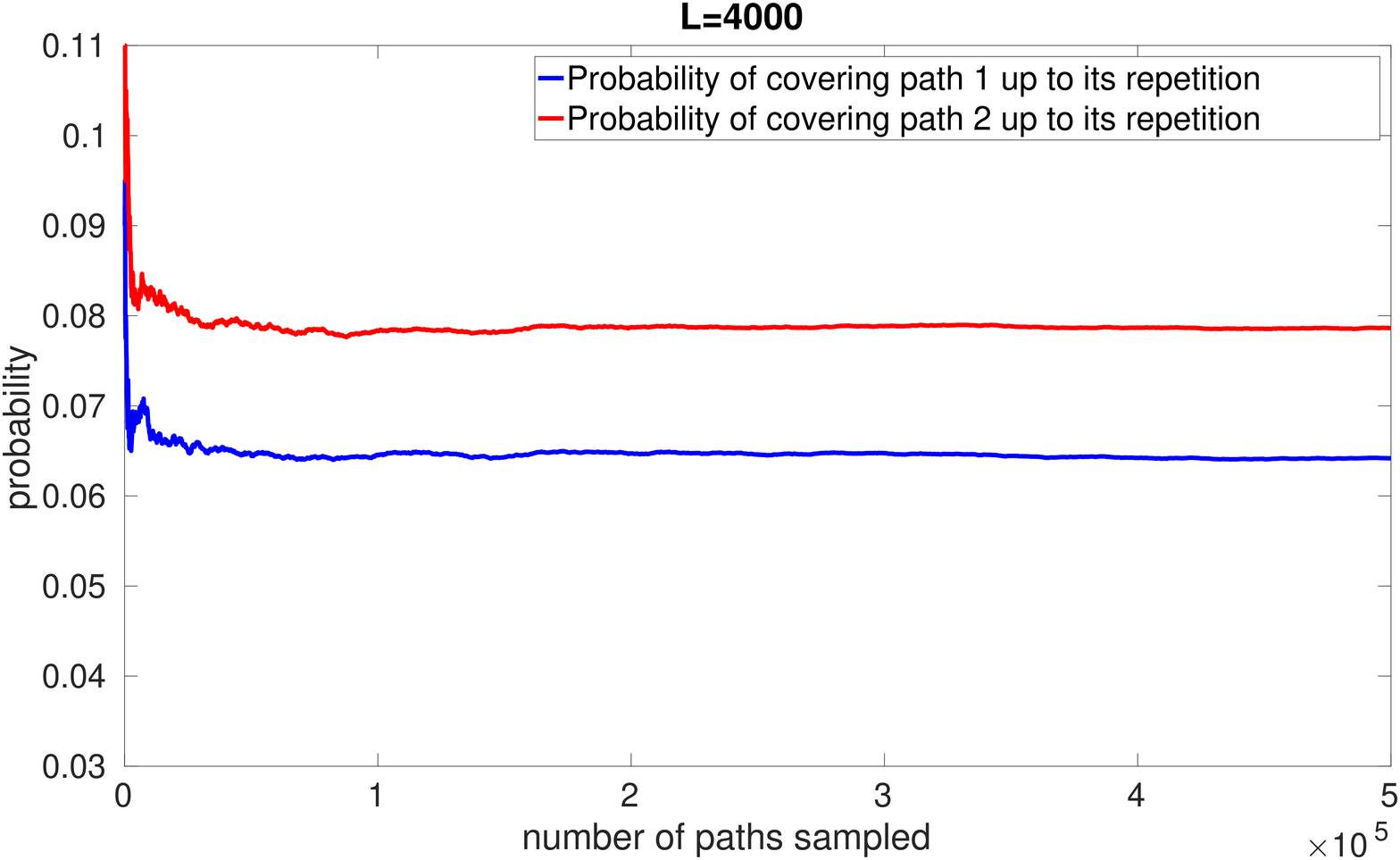} \\
\includegraphics[width=.65\columnwidth]{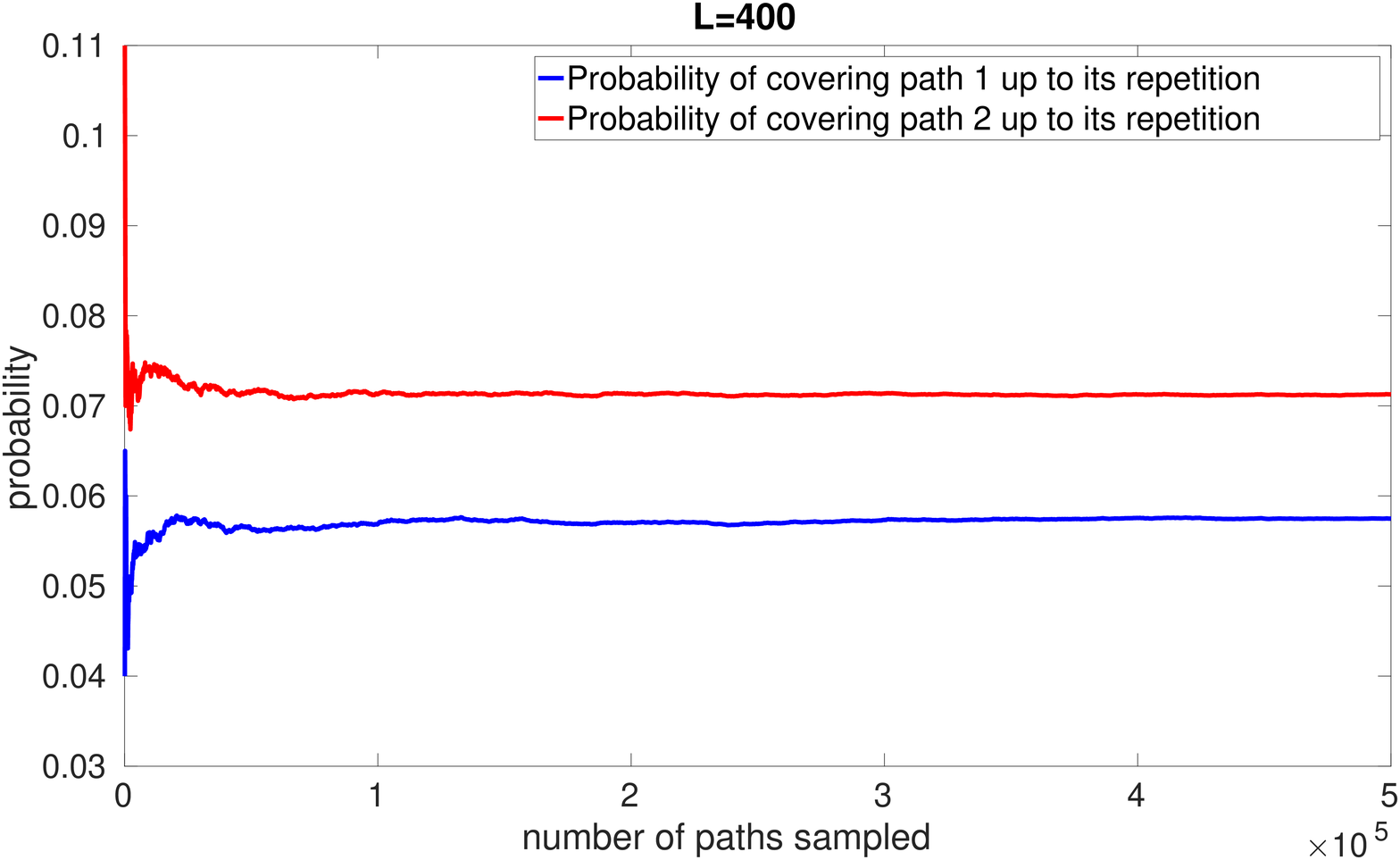} \\
\includegraphics[width=.65\columnwidth]{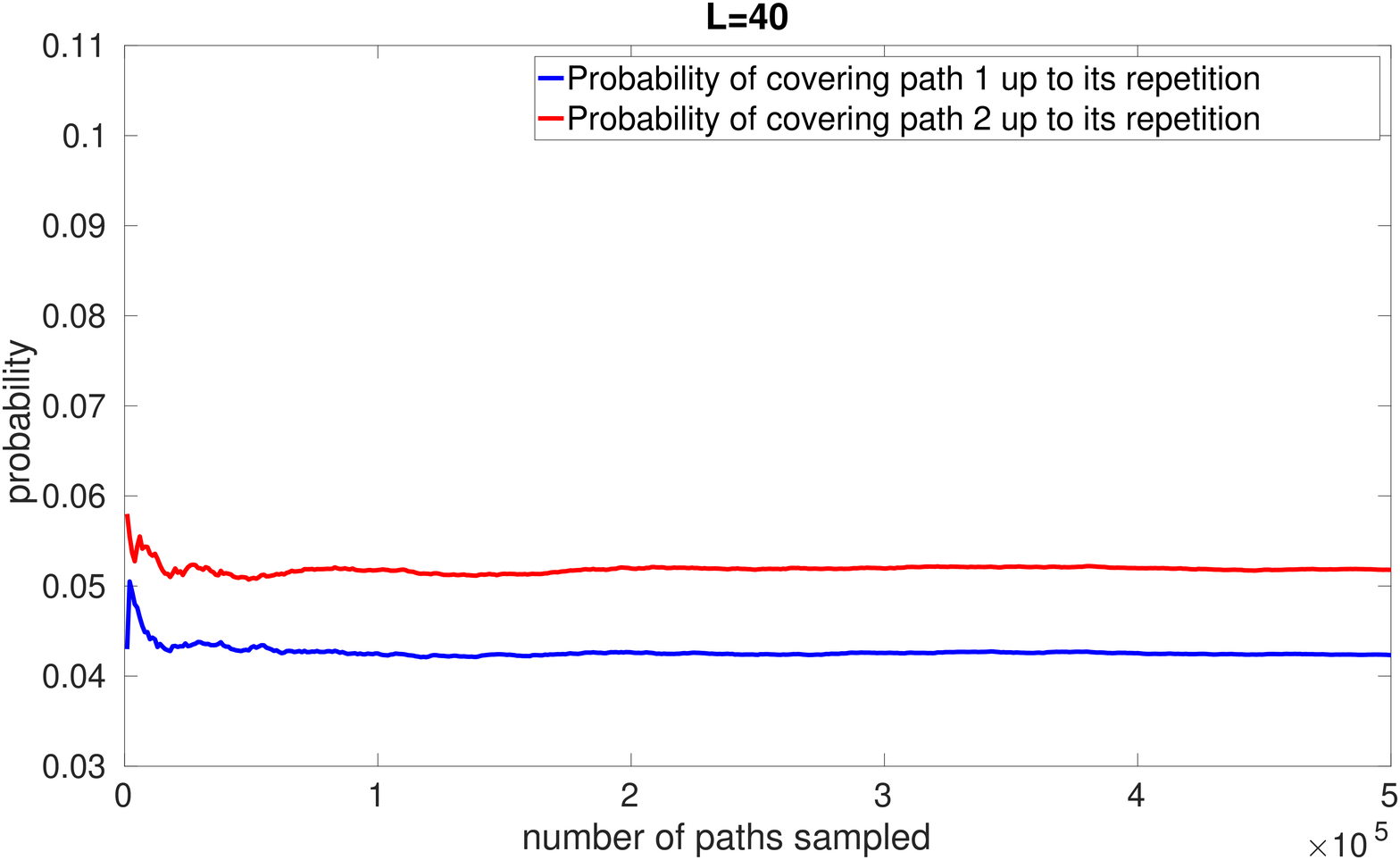} 
\caption{covering probabilities of path 1=$\mathcal{P}$ and path 2=$\mathcal{P}'$, $L=4000,400,40$\label{fig:counter2}}
\end{figure}


\bibliography{mono}
\bibliographystyle{plain}

%
%
%
%
%


\clearpage 
\appendix

\section{~}
\label{sec:appb}
\subsection{Introduction}
\label{appendix A 1} 
In this appendix, we find the asymptotic behavior of the returning probability (to the origin and diagonal line) of a $d$ dimensional simple random walk as $d\to \infty$. The asymptotics of the return to the origin is stated in \cite{Montroll}. We present an alternative proof here for two reasons, the first is that we believe the proof in \cite{Montroll} is not completely rigurous and the second is that the technique we show here can be generalized to the diagonal return probability.

For a $d$ dimensional simple random walk $\{X_{d,n}\}_{n=0}^\infty$ starting at 0 and any $x\in \ZZ^d$,  define the stopping time 
$$
\tau_{d,0}=\inf\{n\ge 1, \ X_{d,n}=x\}.
$$
Then the returning probability is defined by 
\beq
\label{returning probability}
p_d=P(\tau_{d,0}<\infty).
\eeq 
Our main result is as follows: 

\begin{theorem}
\label{Main Theorem}
For $p_d$ defined above, we have 
\beq
\label{main result}
\lim_{d\to\infty} 2dp_d=1.
\eeq
\end{theorem}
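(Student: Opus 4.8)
The plan is to pass from the return probability to the Green's function at the origin and then evaluate its large-$d$ asymptotics via a Bessel-function integral representation. Write $G_d(0)=\sum_{n\ge 0}P(X_{d,n}=0)$ for the Green's function. For $d\ge 3$ the walk is transient, $G_d(0)<\infty$, and the standard first-passage (renewal) decomposition gives $p_d=1-1/G_d(0)$, so that
\beq
\label{strategy reduction}
2d\,p_d=\frac{2d\big(G_d(0)-1\big)}{G_d(0)}.
\eeq
Hence \eqref{main result} follows once we establish the two limits $G_d(0)\to 1$ and $2d\big(G_d(0)-1\big)\to 1$ as $d\to\infty$.

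The next step is to record a closed formula for $G_d(0)$. By Fourier inversion $P(X_{d,n}=0)=(2\pi)^{-d}\int_{[-\pi,\pi]^d}\phi(\theta)^n\,d\theta$ with $\phi(\theta)=\tfrac1d\sum_{j=1}^d\cos\theta_j$; writing $\tfrac1{1-\phi}=\int_0^\infty e^{-t(1-\phi)}\,dt$, using Tonelli, and factorizing over coordinates gives
\beq
\label{strategy bessel}
G_d(0)=\int_0^\infty e^{-t}\,I_0(t/d)^{\,d}\,dt,
\eeq
where $I_0(x)=\tfrac1{2\pi}\int_{-\pi}^{\pi}e^{x\cos\theta}\,d\theta=\sum_{k\ge 0}(x/2)^{2k}/(k!)^2$ is the modified Bessel function of the first kind. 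Writing $I_0(t/d)^d=\exp\!\big(d\ln I_0(t/d)\big)$ and using $\ln I_0(x)=\tfrac{x^2}{4}+O(x^4)$, one sees that for each fixed $t$, $d\ln I_0(t/d)\to 0$ while $d^2\ln I_0(t/d)\to t^2/4$, whence $I_0(t/d)^d\to 1$ and $d\big(I_0(t/d)^d-1\big)=d\big(e^{d\ln I_0(t/d)}-1\big)\to t^2/4$ pointwise in $t$. Granting dominated convergence in \eqref{strategy bessel}, this yields $G_d(0)\to\int_0^\infty e^{-t}\,dt=1$ and
\beq
\label{strategy moment}
d\big(G_d(0)-1\big)=\int_0^\infty e^{-t}\,d\big(I_0(t/d)^d-1\big)\,dt\ \longrightarrow\ \int_0^\infty e^{-t}\,\frac{t^2}{4}\,dt=\frac12,
\eeq
i.e.\ $2d\big(G_d(0)-1\big)\to 1$; together with \eqref{strategy reduction} this proves the theorem.

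What remains is to justify the dominated convergence, and this is powered by elementary estimates for $I_0$. From $(k!)^2\ge k!$ one gets $I_0(x)\le e^{x^2/4}$, hence $0\le\ln I_0(x)\le x^2/4$; in particular $I_0(t/d)^d\le e^{t^2/(4d)}$, which is $\le e^{t/4}$ on the range $t\le d$ (so the integrand in \eqref{strategy bessel} is dominated there by $e^{-3t/4}$), and on $t\le 2\sqrt d$ one further gets $d\big(I_0(t/d)^d-1\big)\le d\big(e^{t^2/(4d)}-1\big)\le(e-1)\,t^2/4$, which is integrable against $e^{-t}$. For the remaining tail $t\ge d$ I would combine the monotonicity of $x\mapsto\ln I_0(x)-x$ (its derivative $I_1(x)/I_0(x)-1$ is strictly negative) with a large-argument Bessel bound of the form $I_0(x)\le C\,e^x x^{-1/2}$; these show that the contribution of $t\ge d$ to both $G_d(0)$ and $d\big(G_d(0)-1\big)$ is exponentially small in $d$.

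The step I expect to be the main obstacle is precisely this last tail estimate: ruling out a non-negligible contribution to \eqref{strategy bessel} from $t$ of order $d$ or larger --- equivalently, from a neighbourhood of $\theta=0$ in Fourier space, the very region that makes $G_d(0)$ finite only for $d\ge 3$. Once the Bessel inequalities above are in place the passage to the limit is routine, but bookkeeping the constants so that the exponential-in-$d$ smallness genuinely beats the polynomial prefactors takes some care; it is also exactly the estimate whose analogue for the diagonal return probability (the other main result of this appendix) is more delicate and requires a separate argument.
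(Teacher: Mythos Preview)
Your approach is correct and the sketch can be completed essentially as you outline, but it is genuinely different from the paper's. The paper does not use the Laplace--Bessel representation $G_d(0)=\int_0^\infty e^{-t}I_0(t/d)^d\,dt$ at all. Instead it works directly with the Fourier integral $G_d(0)=(2\pi)^{-d}\int_{[-\pi,\pi]^d}(1-\phi_d)^{-1}\,d\theta$, expands $\tfrac{1}{1-x}=1+x+x^2+x^3+\tfrac{x^4}{1-x}$, evaluates the first three moment integrals by hand (getting $0,\ \tfrac{1}{2d},\ 0$), and then controls the remainder $\int \phi_d^4/(1-\phi_d)$ probabilistically: it interprets $\phi_d(\theta)$ as a sample mean of i.i.d.\ variables $\cos X_i$, applies a Cram\'er-type large-deviation bound on $\{|\phi_d|>d^{-0.4}\}$, and handles the singularity near $\theta=0$ by a crude spherical-coordinate volume estimate. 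Your route is cleaner for the simple walk; the paper's route is chosen precisely because it transfers to the companion result on return to the diagonal line (their Theorem~\ref{Theorem diagonal}), where the characteristic function $\hat\phi_{d-1}(\theta)=\tfrac1d\big(\cos\theta_1+\sum\cos(\theta_{i+1}-\theta_i)+\cos\theta_{d-1}\big)$ does \emph{not} factor over coordinates and your Bessel identity is unavailable. This is exactly the concern you flag in your last sentence.

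One small gap to patch: for the integral in \eqref{strategy moment} you give a dominating function only on $t\le 2\sqrt d$ and then jump to ``the remaining tail $t\ge d$'', leaving $2\sqrt d\le t\le d$ unaccounted for. It is harmless --- on that range your own bound $I_0(t/d)^d\le e^{t/4}$ already gives $\int_{2\sqrt d}^{d} e^{-t}\,d\big(I_0(t/d)^d-1\big)\,dt\le d\int_{2\sqrt d}^\infty e^{-3t/4}\,dt=\tfrac{4d}{3}e^{-3\sqrt d/2}\to 0$ --- but you should say so. The tail $t\ge d$ is indeed the only place requiring any thought; after the substitution $s=t/d$ it becomes $d\int_1^\infty[e^{-s}I_0(s)]^d\,ds$, and since $e^{-s}I_0(s)$ is strictly decreasing with $e^{-1}I_0(1)<1$ and $e^{-s}I_0(s)=O(s^{-1/2})$, writing $[e^{-s}I_0(s)]^d\le \rho^{\,d-3}[e^{-s}I_0(s)]^3$ with $\rho=e^{-1}I_0(1)$ makes the whole tail $O(d^2\rho^{\,d})$, more than enough.
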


\noindent While Theorem \ref{Main Theorem} is stated for the leading order of $p_d$, one should be able to easily use the same technique in our proof to have the second and higher order asymptotics. 

\

\noindent Also, noting that 
$$
p_d=\frac{1}{2d}\sum_{i=1}^d (p_{d,e_{d,i}}+p_{d,-e_{d,i}})=p_{d,e_{d,1}},
$$
there is always a simple lower bound $p_d>1/(2d)$. With Theorem \ref{Main Theorem}, we actually know that such trivial lower bound is asymptotically sharp. Intuitively, when the dimension is very high, if a random walk does not return to 0 immediately in the first 2 steps, it will with high conditional probability get lost in the ocean of choices and never be able to go back to 0.  

\

\noindent Moreover, a similar method as in Theorem \ref{Main Theorem} may also work for other random walks. Particularly, for a specific $d-1$ dimensional one defined by
$$
\hat X_{d-1,n}=\left(X_{d,n}^1-X_{d,n}^2,X_{d,n}^2-X_{d,n}^3,\cdots, X_{d,n}^{d-1}-X_{d,n}^{d}\right)
$$
where $X_{d,n}^i$ is the $i$th coordinate of $X_{d,n}$, we can show the same asymptotic for $\hat X_{d-1,n}$ which also gives us the asymptotic of the probability that a $d$ dimensional simple random walk ever returns to the diagonal line.  To make the statement precise, consider the diagonal line in $\ZZ^d$
$$
l_d=\{(n,n,\cdots, n)\in \ZZ^d, n\in\ZZ\}\subset \ZZ^d.
$$
Define the stopping time
$$
\tau_{d,l_d}=\inf\{n\ge 1, \  X_{d,n}\in l_d\},
$$ 
and let
$$
P_{d}=P(\tau_{d,l_d}<\infty)
$$
be the returning probability to $l_d$. 

\begin{theorem}
\label{Theorem diagonal}
For $P_d$ defined above, we have 
\beq
\label{main result}
\lim_{d\to\infty} 2dP_{d}=1. 
\eeq
\end{theorem}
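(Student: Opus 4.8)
The plan is to deduce Theorem~\ref{Theorem diagonal} from Theorem~\ref{Main Theorem} by comparing the Green's function of the diagonal line with that of the origin. Write $\{X_{d,n}\}_{n\ge 0}$ for the $d$-dimensional simple random walk from $0$ and set
$$
G_d(0)=\sum_{n\ge 0}P(X_{d,n}=0),\qquad \widehat G_d=\sum_{n\ge 0}P(X_{d,n}\in l_d),
$$
the expected numbers of visits of the walk to $0$ and to $l_d$. By translation invariance of the walk along $l_d$ and the usual renewal identity, $G_d(0)=(1-p_d)^{-1}$ and $\widehat G_d=(1-P_d)^{-1}$, so Theorem~\ref{Main Theorem} is exactly the statement $2d\,(G_d(0)-1)\to 1$. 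Since $0\in l_d$ we have $\widehat G_d\ge G_d(0)\ge 1$, hence
$$
0\le P_d-p_d=\frac1{G_d(0)}-\frac1{\widehat G_d}=\frac{\widehat G_d-G_d(0)}{G_d(0)\,\widehat G_d}\le\widehat G_d-G_d(0),
$$
so it suffices to prove $2d\,(\widehat G_d-G_d(0))\to 0$; then $2dP_d=2dp_d+2d(P_d-p_d)\to 1$.

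The next step is the one geometric fact specific to $l_d$: its nonzero lattice points are the points $(m,m,\dots,m)$ with $m\in\ZZ\setminus\{0\}$, each at $\ell_1$-distance $d|m|\ge d$ from the origin. Since $X_{d,n}$ moves by unit steps, $X_{d,n}\in l_d$ with $n<d$ forces $X_{d,n}=0$; hence the two Green's functions differ only by late visits to the far points of $l_d$:
$$
\widehat G_d-G_d(0)=\sum_{n\ge d}P\big(X_{d,n}\in l_d\setminus\{0\}\big)=:R_d .
$$
Thus everything reduces to showing $R_d=o(1/d)$, and I expect in fact $R_d$ to decay exponentially in $d$.

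To estimate $R_d$, put $\bar p_n=\sup_{z\in\ZZ^d}P(X_{d,n}=z)$. A one-step bound shows $\bar p_n$ is non-increasing, and since the characteristic function of $X_{d,2k}$ is nonnegative, $\bar p_{2k}=P(X_{d,2k}=0)$. As $X_{d,n}\in l_d$ requires $d|m|\le n$, there are at most $2n/d+1$ admissible values of $m$, so
$$
R_d\le\sum_{n\ge d}\Big(\tfrac{2n}{d}+1\Big)\bar p_n\le\sum_{n\ge d}\Big(\tfrac{2n}{d}+1\Big)P\big(X_{d,\,2\lfloor n/2\rfloor}=0\big).
$$
The remaining input is a standard local-CLT estimate: $P(X_{d,2k}=0)$ is exponentially small in $d$ uniformly for $k$ in any polynomial range $k\le d^A$, while the elementary heat-kernel bound $P(X_{d,2k}=0)\le C(d/k)^{d/2}$ renders the tail over $k>d^A$ summable. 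Together these give $R_d=O\big(d^{O(1)}\rho^{\,d}\big)$ for some $\rho<1$; in particular $R_d=o(1/d)$, which with Theorem~\ref{Main Theorem} completes the proof.

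The only genuinely delicate point is this last estimate when $k\asymp d$: there the crude bounds on the number of length-$2k$ closed walks in $\ZZ^d$ (for instance $\binom d\ell(2\ell)^{2k}$ for walks supported on $\ell$ coordinates) are hopelessly lossy, and one must use the sharper heat-kernel/characteristic-function control of $P(X_{d,2k}=0)$ that already enters the proof of Theorem~\ref{Main Theorem}. An alternative, in the spirit of the statement's auxiliary walk, is to rerun the argument of Theorem~\ref{Main Theorem} verbatim for $\widehat X_{d-1,n}$ using the generator recorded in the proof of Theorem~\ref{Theorem Main}: one checks $P(\widehat X_{d-1,2}=0)=\tfrac1{2d}$ exactly and $\sum_{k\ge 2}P(\widehat X_{d-1,2k}=0)=O(d^{-2})$, which works because the jump set of $\widehat X_{d-1}$ still consists of $2d$ vectors of the same geometric type, though the loop counting is more delicate since the increments of $\widehat X_{d-1}$ are correlated; the Green's-function reduction above avoids that bookkeeping.
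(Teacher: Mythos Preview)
Your primary approach—reducing the diagonal return probability $P_d$ to the point return probability $p_d$ by bounding the difference $\widehat G_d - G_d(0) = \sum_{m\neq 0} G_d(m\mathbf 1)$—is genuinely different from the paper's route. The paper instead takes precisely what you describe in your last paragraph as the ``alternative'': it works directly with the $(d-1)$-dimensional auxiliary walk $\hat X_{d-1}$, writes $\hat G_{d-1}(0)$ as a Fourier integral with characteristic function $\hat\phi_{d-1}(\theta)=\frac1d\bigl[\cos\theta_1+\sum_{i=1}^{d-2}\cos(\theta_{i+1}-\theta_i)+\cos\theta_{d-1}\bigr]$, and re-runs the moment expansion of Theorem~\ref{Main Theorem}. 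The first three moment integrals again contribute $0,\ \tfrac1{2d},\ 0$; the tail $\hat{\mathcal E}_{d-1}$ is controlled by the same large-deviation scheme but with two extra wrinkles you anticipated—splitting the sum of cosines into odd and even indices to recover an i.i.d.\ structure, and a singular-value lower bound on the Jordan block $J_{d-1}(1)$ to control the integrand near the singularity where all $\theta_i$ are close.

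Your reduction is appealing because it isolates a single extra estimate, $R_d=o(1/d)$, on top of Theorem~\ref{Main Theorem}, and avoids the non-simple walk entirely. But the proof of that estimate is incomplete, as you yourself flag. Concretely, what you need is roughly $\sum_{k\ge d/2} k\,P(X_{d,2k}=0)\to 0$, and the two ingredients you propose—exponential smallness for $k\le d^A$ and the heat-kernel bound $P(X_{d,2k}=0)\le C(d/k)^{d/2}$ for the tail—are both true but neither is proved here. In particular the second one needs a constant $C$ \emph{uniform in $d$}, which is where most of the analytic work lives; Hoeffding-type concentration gives only $P(X_{d,2k}=0)\lesssim (2/d)^k k!$, which blows up once $k\gtrsim ed/2$ and so does not by itself sum. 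So while the reduction is correct and the program is viable, the proposal as written defers the substantive estimate rather than establishing it, and the labor saved relative to the paper's direct Fourier argument is smaller than it first appears.
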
 

With Theorem \ref{Theorem diagonal}, we further look at the probability that a $d$ dimensional simple random walk starting from some point in Trace$\big(\overset{\nearrow}{\mathcal{P}}\big)$ will ever return to Trace$\big(\overset{\nearrow}{\mathcal{P}}\big)$. Note that for each point 
$$
x=(x^{(1)},x^{(2)},\cdots, x^{(d)})\in {\rm Trace}\big(\overset{\nearrow}{\mathcal{P}}\big)
$$
we must have either $x=0$ or there must be some $1\le k\le d$ and $0\le n\le [N/d]$ such that 
$$
x^{(i)}=\left\{
\begin{aligned}
&n+1 \ \ i\le k\\
&n \hspace{0.39 in} i>k.  
\end{aligned}
\right.
$$
Thus when looking at $\hat x=(x^{(1)}-x^{(2)},x^{(2)}-x^{(3)}\cdots, x^{(d-1)}-x^{(d)})$ we must have either $\hat x=0$ or $\hat x=e_{d-1,i}$ for some $i=1,2,\cdots, d$. In this appendix, we will use the notation $e_{d-1,0}=0$ and let $D_{d-1}=\{e_{d-1,i}: i=0,1,\cdots, d-1\}\subset\ZZ^d$. One can immediately see that when simple random walk $X_{d,n}$ starting from some point in Trace$\big(\overset{\nearrow}{\mathcal{P}}\big)$ returns to Trace$\big(\overset{\nearrow}{\mathcal{P}}\big)$, we must have the corresponding non simple random walk $\hat X_{d-1,n}$ starting from $D_{d-1}$ returns to $D_{d-1}$. Thus for any simple random walk $X_{d,n}$ starting at 0, define the stopping times $T_{d,0}=0$
$$
T_{d,1}=\inf\left\{n\ge 1: \ X_{d,n}\in {\rm Trace}\big(\overset{\nearrow}{\mathcal{P}}\big)\right\},
$$
and 
$$
T_{d,k}=\inf\left\{n\ge T_{d,k-1}: \ X_{d,n}\in {\rm Trace}\big(\overset{\nearrow}{\mathcal{P}}\big)\right\}
$$
for all $k\ge 2$ with the convention $\inf\{n\ge \infty\}=\infty$. And for $\hat X_{d-1,n}$ also starting at 0, and any $0\le i,j\le d-1$, define the stopping time 
$$
T^{(i,j)}_{d-1}=\inf\{n\ge 1 X_{d-1,n}=e_{d-1,j}-e_{d-1,i}\}.
$$
Then it is easy to see that for any $k\ge 0$
\beq
\label{inequality diagonal 1}
P(T_{d,k+1}<\infty|T_{d,k}<\infty)\le \sup_{0\le i\le d-1} P\left(\inf_{0\le j\le d-1}\left\{T^{(i,j)}_{d-1}\right\}<\infty\right).
\eeq
With basically similar but more complicated technique as in the proof of Theorem \ref{Theorem diagonal} we have 
\begin{theorem}
\label{Theorem return d}
There is a $C<\infty$ such that for all $d\ge 4$, 
\beq
\label{inequality  sharp}
\sup_{0\le i\le d-1} P\left(\inf_{0\le j\le d-1}\left\{T^{(i,j)}_{d-1}\right\}<\infty\right)\le \frac{C}{d}. 
\eeq
\end{theorem}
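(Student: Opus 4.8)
The plan is to pass to the $(d-1)$-dimensional auxiliary walk $\hat X_{d-1}$ of Theorem \ref{Theorem diagonal}, bound the quantity in question by a sum of Green's-function values, and then estimate that sum using the cyclic structure of the target set. Write $G(x,y)=\sum_{n\ge0}P_x(\hat X_{d-1,n}=y)$; since $\hat X_{d-1}$ is a homogeneous random walk one has $G(y,y)=G(0,0)$ and $P(\hat X_{d-1}\text{ visits }y\text{ at a positive time})=G(0,y)/G(0,0)$ for every $y\neq0$. Fix $0\le i\le d-1$ and put $\Lambda_i=\{e_{d-1,j}-e_{d-1,i}:0\le j\le d-1\}$ (recall $e_{d-1,0}=0$); then $\{\inf_{j}T^{(i,j)}_{d-1}<\infty\}$ is exactly the event that $\hat X_{d-1}$ visits $\Lambda_i$ at a positive time. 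As $0\in\Lambda_i$ (the term $j=i$), a union bound gives
\[
P\Big(\inf_{0\le j\le d-1}T^{(i,j)}_{d-1}<\infty\Big)\ \le\ P_d\ +\ \frac1{G(0,0)}\sum_{y\in\Lambda_i\setminus\{0\}}G(0,y)\ \le\ P_d\ +\ \sum_{y\in\Lambda_i\setminus\{0\}}G(0,y),
\]
where $P_d=P(\hat X_{d-1}\text{ returns to }0)$ is the diagonal return probability. By Theorem \ref{Theorem diagonal}, $2dP_d\to1$, so $P_d\le c_0/d$ for an absolute $c_0$ and all $d\ge4$, and $G(0,0)=1/(1-P_d)$ is bounded uniformly in $d\ge4$. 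Hence the statement reduces to proving $\sup_i\sum_{y\in\Lambda_i\setminus\{0\}}G(0,y)=O(1/d)$.

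The geometric input is that the step set $S=\{\pm e_{d-1,1},\pm e_{d-1,d-1}\}\cup\{\pm(e_{d-1,k}-e_{d-1,k-1}):2\le k\le d-1\}$ of $\hat X_{d-1}$ makes the $d$ points $0,e_{d-1,1},\dots,e_{d-1,d-1}$ a chordless $d$-cycle in its Cayley graph; moreover, since the weight $w(z)=\sum_{m}mz_m$ changes by exactly $\pm1$ or $\pm(d-1)$ along every generator of $S$, a one-line Lyapunov argument identifies the graph distance from $0$ to $e_{d-1,j}-e_{d-1,i}$ with the cycle distance $\min(|i-j|,\,d-|i-j|)$. So for every $i$ the multiset $\{{\rm dist}(0,y):y\in\Lambda_i\}$ is the distance profile of a vertex of the $d$-cycle: the value $0$ once, and at most two points at each distance $\ell\in\{1,\dots,\lfloor d/2\rfloor\}$. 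Therefore
\[
\sum_{y\in\Lambda_i\setminus\{0\}}G(0,y)\ \le\ 2\sum_{\ell=1}^{\lfloor d/2\rfloor}\ \max_{{\rm dist}(0,y)=\ell}G(0,y),
\]
and since $\ell!(2d)^{-\ell}=\tfrac1{2d}\prod_{m=2}^{\ell}\tfrac{m}{2d}\le\tfrac1{2d}4^{-(\ell-1)}$ on this range, it is enough to prove, uniformly in $d$, that $G(0,y)\le C_0\,\ell!\,(2d)^{-\ell}$ whenever ${\rm dist}(0,y)=\ell$, for some absolute constant $C_0$; the resulting $O(1/d)$ is then governed entirely by the (at most two) points at distance $1$, for which indeed $G(0,y)=\tfrac1{2d}(1+o(1))$, by the same computation as for $G(0,0)$.

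The inequality $G(0,y)\le C_0\,\ell!\,(2d)^{-\ell}$ for ${\rm dist}(0,y)=\ell$ is the technical core and the one genuinely hard step. Since $G(0,y)=G(0,0)\,P(\hat X_{d-1}\text{ visits }y)$ with $G(0,0)=O(1)$, it amounts to the same bound for the hitting probability: the leading term is the total weight $O(\ell!\,(2d)^{-\ell})$ of the (at most $O(\ell!)$) geodesics of length $\ell$ from $0$ to $y$ — the canonical ones push the unique $+1$ coordinate along the cycle in an arbitrary order — while the longer paths, of lengths $\ell+2,\ell+4,\dots$, contribute only a bounded multiplicative correction that must be extracted from the return estimates of $\hat X_{d-1}$. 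The main obstacle is to make all of this uniform in $d$: the one-step covariance of $\hat X_{d-1}$ equals $\tfrac1d$ times the $(d-1)\times(d-1)$ discrete Dirichlet Laplacian, whose least eigenvalue is $\asymp d^{-2}$, so the walk is strongly anisotropic and off-the-shelf local-CLT bounds on $P(\hat X_{d-1,n}=\cdot)$ degrade as $d\to\infty$; one has to re-run the delicate combinatorial/Fourier estimates behind Theorem \ref{Theorem diagonal} (including the intermediate regime $n\asymp d$ and the period-$2$ behaviour when $d$ is even), now simultaneously for the $\Theta(d)$ targets of $\Lambda_i$ and uniformly over $i$ — which is exactly what the cycle-distance bookkeeping above is set up to handle. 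Combining the three displays gives $\sup_i P(\inf_j T^{(i,j)}_{d-1}<\infty)\le P_d+O(1/d)=C/d$ with an explicit constant $C$.
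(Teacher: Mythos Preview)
Your reduction is the paper's first move verbatim: union bound, then Green's functions, i.e.\ $\sup_i P(\inf_j T^{(i,j)}_{d-1}<\infty)\le \sup_i\sum_j \hat G_{d-1}(e_j-e_i)/\hat G_{d-1}(0)$, and your cycle-distance description of $\Lambda_i$ is correct and useful.

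The gap is the inequality you isolate as ``the technical core'': $G(0,y)\le C_0\,\ell!\,(2d)^{-\ell}$ uniformly in $d$ and $\ell\le d/2$. You give a geodesic-counting heuristic for the leading order and then defer the control of longer paths to ``re-running the delicate combinatorial/Fourier estimates behind Theorem~\ref{Theorem diagonal}.'' But those estimates only yield $\hat G_{d-1}(0)-1=O(d^{-1})$; nothing in that argument produces a bound that \emph{improves with $\ell$}, which is exactly what your geometric-series summation requires. The anisotropy obstruction you flag (least singular value $\asymp d^{-1}$) is real, and you have not indicated how to overcome it to get $\ell$-dependent decay. As written, the proposal stops precisely at the hard step.

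The paper avoids this entirely by proving something weaker but sufficient. It expands
\[
\hat G_{d-1}(e_j-e_i)=\sum_{p=0}^{5}\Big(\tfrac{1}{2\pi}\Big)^{d-1}\!\int\cos(\theta_j-\theta_i)\,\hat\phi_{d-1}^{\,p}\,d\theta\;+\;\hat{\mathcal E}^{(i,j)}_d,\qquad \hat{\mathcal E}^{(i,j)}_d=\Big(\tfrac{1}{2\pi}\Big)^{d-1}\!\int\frac{\cos(\theta_j-\theta_i)\,\hat\phi_{d-1}^{\,6}}{1-\hat\phi_{d-1}}\,d\theta,
\]
and proves two things. First, a parity lemma (Lemma~\ref{Lemma tail}): $\int\cos(\theta_j-\theta_i)\hat\phi_{d-1}^{\,p}=0$ whenever $p<d(i,j)$, so that for $d(i,j)\ge 6$ only the tail term survives. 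Second, a \emph{uniform} bound $|\hat{\mathcal E}^{(i,j)}_d|\le C_1 d^{-2.4}$ valid for \emph{all} $i,j$, obtained by replacing $|\cos(\theta_j-\theta_i)|\le 1$ and recycling the large-deviation and spherical-coordinate estimates of Theorem~\ref{Theorem diagonal} with $\hat\phi^6$ in place of $\hat\phi^4$. The far targets then contribute at most $d\cdot C_1 d^{-2.4}=O(d^{-1.4})$; the at most eleven near targets ($d(i,j)\le 5$) give a fixed finite list of integrals $\int\cos(\theta_j-\theta_i)\hat\phi^p$ with $1\le p\le 5$, each bounded by $\int\hat\phi^2=1/(2d)$. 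No $\ell$-dependent decay is ever needed: one crude cutoff at sixth order and a \emph{single} uniform tail bound do the job. Your framework would accommodate this argument directly (split $\ell\le 5$ versus $\ell\ge 6$), but the pointwise estimate you actually assert is both stronger than what is proved and unproved in your write-up.
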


With Theorem \ref{Theorem return d}, the proof of Theorem \ref{Theorem sharp} is imminent. 
\begin{proof}[Proof of Theorem \ref{Theorem sharp}] 
\label{Proof Theorem sharp}
With \eqref{inequality diagonal 1} and \eqref{inequality  sharp}, we can immediately have
$$
P\big(\overset{\nearrow}{\mathcal{P}}\in {\rm Trace}(X_0,X_1\cdots)\big)\le  P(T_{d,N}<\infty)
$$
while 
$$
P_x(T_{d,N}<\infty)\le \left[\sup_{0\le i\le d-1} P\left(\inf_{0\le j\le d}\left\{T^{(i,j)}_{d-1}\right\}<\infty\right)\right]^N\le\left(\frac{C}{d}\right)^N. 
$$
And the proof of Theorem \ref{Theorem sharp} is complete. 
\end{proof}

\subsection{Useful Facts from Calculus}

In this section, we list some very basic but useful facts from calculus that we are going to use later in the proof. 

\

\noindent  \textcircled{1} For any function $f(x)\in C(\RR)$ and any $a\in \RR$, 
\beq
\label{shift cos(x)}
\int_a^{a+2\pi}f\big(\cos(x),\sin(x)\big)dx=\int_0^{2\pi}f\big(\cos(x),\sin(x)\big)dx.
\eeq

\noindent  \textcircled{2} for any nonnegative integers $m,n$ and 
$$
C_{m,n}=\int_0^{2\pi} \cos^m(x)\sin^n(x)dx
$$
we have $C_{m,n}=0$ if at least one of $m$ and $n$ is odd. 

\noindent  \textcircled{3} There is a $c>0$ such that $1-\cos(x)\ge cx^2$ for all $x\in [-3\pi/2,3\pi/2]$.  
\begin{proof}
Consider function 
$$
f(x)=\left\{
\begin{aligned}
\frac{1-\cos(x)}{x^2}, \  &x\not=0\\
\frac{1}{2} \ \ \ \ \ \ \ , \    &x=0. 
\end{aligned}
\right.
$$
It is easy to see by its Taylor expansion that $f(x)\in C^\infty(\RR)$. And we have $f(x)>0$ for all $x\in [-3\pi/2,3\pi/2]$. Thus let $c=\min_{x\in [-3\pi/2,3\pi/2]}\{f(x)\}>0$, we have
$$
1-\cos(x)\ge cx^2, \ \forall x\in [-3\pi/2,3\pi/2].
$$
Actually, one can easily evaluate that $c=\frac{4}{9\pi^2}$. 
\end{proof}

\noindent  \textcircled{4} There is some $c_1>0$ such that within $[-c_1,c_1]$,
$$
e^x\le 1+x+x^2.
$$
\begin{proof}
Again consider function 
$$
g(x)=\left\{
\begin{aligned}
\frac{e^x-1-x}{x^2}, \  &x\not=0\\
\frac{1}{2} \ \ \ \ \ \ \ , \    &x=0. 
\end{aligned}
\right.
$$
It is easy to see by its Taylor expansion that $g(x)\in C^\infty(\RR)$. So by continuity of $g$  around 0, there must be such a $c_1>0$. Actually one can easily check that $c_1=1$ satisfies the inequality here.  
\end{proof}

\noindent  \textcircled{5} For any $x>0$, $\log(1+x)\le x$. 
\begin{proof}
Note that $x=\log(x+1)$ and that
$$
\frac{dx}{dx}=1=\frac{d\log(x+1)}{dx}=\frac{1}{1+x}
$$
at $x=0$, $x$ is linear while $\log(1+x)$ is concave. 
\end{proof}

\noindent  \textcircled{6} With \textcircled{2}, we can also have that for any $n\in \ZZ^+$, integers $k_1,k_2,\cdots, k_n\ge 0$ and any $a_1,a_2,\cdots, a_n\in R$, suppose
$$
K=\sum_{i=1}^n k_n
$$ 
is a odd number. We always have
$$
\int_{-\pi}^{\pi} \prod_{i=1}^n\cos^{k_i}(\theta-a_i)d\theta=0. 
$$
\begin{proof}
For each $i$, note that 
$$
\cos(\theta-a_i)=\cos(a_i)\cos(\theta)+\sin(a_i)\sin(\theta). 
$$
Thus we can expand $\prod_{i=1}^n\cos^{k_i}(\theta-a_i)$ as a binomial of $\cos(\theta)$ and $\sin(\theta)$ and get
\beq
\label{6 summation}
\prod_{i=1}^n\cos^{k_i}(\theta-a_i)=\sum_{j=0}^K c_j \cos^j(\theta)\sin^{K-j}(\theta). 
\eeq
Then look at each term in the summation on the right hand side of \eqref{6 summation}. For any $j$ one can immediately have one between $j$ and $K-j$ must be odd since $K$ is odd. Thus \textcircled{2} finishes the proof. 
\end{proof}

\subsection{Returning Probability to 0}\label{sec:returnzeroapp}

In this section we prove Theorem \ref{Main Theorem}. It is well known (e.g. \cite{randomwalkbook}) that 
$$
P(\tau_{d,0}<\infty)=1-G_d^{-1}(0)
$$
where $G_d(\cdot)$ is the $d$ dimensional Green's function. I.e., 
\beq
\label{G(x)}
G_d(x)=\left(\frac{1}{2\pi}\right)^d\int_{[-\pi,\pi]^d}\frac{1}{1-\phi_d(\theta)}e^{-ix\cdot\theta}d\theta,
\eeq
where 
$$
\phi_d(\theta)=\frac{1}{d}\sum_{i=1}^d \cos(\theta_i)
$$
is the character function of $d$ dimensional simple random walk. See Section 4.1 of \cite{randomwalkbook} for details. Since
$$
p_d=P(\tau_{d,0}<\infty)=1-G_d^{-1}(0)=\frac{G_d(0)-1}{G_d(0)},
$$
to prove the asymptotics we want, it suffices to show that 
\beq
\label{limit of G(0)}
\lim_{d\to\infty} 2d[G_d(0)-1]=1.
\eeq 
To show \eqref{limit of G(0)}, noting that 
\beq
\label{G_d(0)}
G_d(0)=\left(\frac{1}{2\pi}\right)^d\int_{[-\pi,\pi]^d}\displaystyle{\frac{1}{1-\frac{1}{d}\sum_{i=1}^d \cos(\theta_i)}}d\theta_1d\theta_2\cdots d\theta_d
\eeq
we will concentrate on studying this integration. Since 
$$
\frac{1}{1-x}=1+x+x^2+x^3+\frac{x^4}{1-x}
$$
for all $x\not=1$, we have 
\beq
\label{G(0) decomposition}
\begin{aligned}
G_d(0)=1&+\left(\frac{1}{2\pi}\right)^d\int_{[-\pi,\pi]^d}\left( \frac{\sum_{i=1}^d \cos(\theta_i)}{d}\right)d\theta_1d\theta_2\cdots d\theta_d\\
&+\left(\frac{1}{2\pi}\right)^d\int_{[-\pi,\pi]^d}\left( \frac{\sum_{i=1}^d \cos(\theta_i)}{d}\right)^2d\theta_1d\theta_2\cdots d\theta_d\\
&+\left(\frac{1}{2\pi}\right)^d\int_{[-\pi,\pi]^d}\left( \frac{\sum_{i=1}^d \cos(\theta_i)}{d}\right)^3d\theta_1d\theta_2\cdots d\theta_d\\
&+\left(\frac{1}{2\pi}\right)^d\int_{[-\pi,\pi]^d}\frac{\left( \frac{\sum_{i=1}^d \cos(\theta_i)}{d}\right)^4}{1-\left( \frac{\sum_{i=1}^d \cos(\theta_i)}{d}\right)}d\theta_1d\theta_2\cdots d\theta_d.
\end{aligned}
\eeq
For the first integration in \eqref{G(0) decomposition}, we have for any $i=1,2,\cdots, d$, by \textcircled{2}
$$
\left(\frac{1}{2\pi}\right)^d\int_{[-\pi,\pi]^d} \cos(\theta_i) d\theta_1d\theta_2\cdots d\theta_d=\frac{1}{2\pi}\int_{-\pi}^\pi \cos(\theta_i) d\theta_i=0. 
$$
So
\beq
\label{integration 1}
\left(\frac{1}{2\pi}\right)^d\int_{[-\pi,\pi]^d}\left( \frac{\sum_{i=1}^d \cos(\theta_i)}{d}\right)d\theta_1d\theta_2\cdots d\theta_d=0.
\eeq
Then for the second integration in \eqref{G(0) decomposition}, we have
\begin{align*}
&\left(\frac{1}{2\pi}\right)^d\int_{[-\pi,\pi]^d}\left( \frac{\sum_{i=1}^d \cos(\theta_i)}{d}\right)^2d\theta_1d\theta_2\cdots d\theta_d\\
& \hspace{0.4 in} =\frac{1}{d^2}\left(\frac{1}{2\pi}\sum_{i=1}^d\int_{-\pi}^{\pi} \cos^2(\theta_i)d\theta_i+\left(\frac{1}{2\pi}\right)^2 \sum_{i\not=j} \int_{[-\pi,\pi]^2} \cos(\theta_i)\cos(\theta_j)d\theta_id\theta_j\right).
\end{align*}
Note that for any $i\not=j$, 
$$
\int_{[-\pi,\pi]^2} \cos(\theta_i)\cos(\theta_j)d\theta_id\theta_j=\left(\int_{-\pi}^\pi \cos(\theta_i) d\theta_i \right)\times \left(\int_{-\pi}^\pi \cos(\theta_j) d\theta_j \right)=0
$$
and that
$$
\int_{-\pi}^{\pi} \cos^2(\theta_i)d\theta_i=\int_{-\pi}^{\pi} \frac{\cos(2\theta_i)+1}{2}d\theta_i=\pi.
$$
So
\beq
\label{integration 2}
\left(\frac{1}{2\pi}\right)^d\int_{[-\pi,\pi]^d}\left( \frac{\sum_{i=1}^d \cos(\theta_i)}{d}\right)^2d\theta_1d\theta_2\cdots d\theta_d=\frac{1}{2\pi d^2}\sum_{i=1}^d \pi=\frac{1}{2d}. 
\eeq
Moreover, note that 
\begin{align*}
&\left(\frac{1}{2\pi}\right)^d\int_{[-\pi,\pi]^d}\left( \sum_{i=1}^d \cos(\theta_i)\right)^3d\theta_1d\theta_2\cdots d\theta_d\\
&\hspace{0.4 in} =\frac{1}{2\pi}\sum_{i=1}^d \int_{-\pi}^{\pi} \cos^3(\theta_i)d\theta_i+\left(\frac{1}{2\pi}\right)^2 \sum_{i\not=j}3\int_{[-\pi,\pi]^2}\cos(\theta_i)^2\cos(\theta_j)d\theta_id\theta_j\\
&\hspace{0.4 in} +\left(\frac{1}{2\pi}\right)^3  \sum_{i\not=j\not=k} \int_{[-\pi,\pi]^3}\cos(\theta_i)\cos(\theta_j)\cos(\theta_k)d\theta_id\theta_jd\theta_k\\
 &\hspace{0.4 in} =0. 
\end{align*}
Thus
\beq
\label{integration 3}
\left(\frac{1}{2\pi}\right)^d\int_{[-\pi,\pi]^d}\left( \frac{\sum_{i=1}^d \cos(\theta_i)}{d}\right)^3d\theta_1d\theta_2\cdots d\theta_d=0.
\eeq
Combining  \eqref{G(0) decomposition} and \eqref{integration 1}-\eqref{integration 3} , we have
\beq
\label{G(0) new}
G_d(0)=1+\frac{1}{2d}+\left(\frac{1}{2\pi}\right)^d\int_{[-\pi,\pi]^d}\frac{\left( \frac{\sum_{i=1}^d \cos(\theta_i)}{d}\right)^4}{1-\left( \frac{\sum_{i=1}^d \cos(\theta_i)}{d}\right)}d\theta_1d\theta_2\cdots d\theta_d.
\eeq
Thus to have the desired asymptotic, it is sufficient to show that 
\beq
\label{o(d^-1)}
\mathcal{E}_d=\left(\frac{1}{2\pi}\right)^d\int_{[-\pi,\pi]^d}\frac{\left( \frac{\sum_{i=1}^d \cos(\theta_i)}{d}\right)^4}{1-\left( \frac{\sum_{i=1}^d \cos(\theta_i)}{d}\right)}d\theta_1d\theta_2\cdots d\theta_d=o(d^{-1}). 
\eeq
To show \ref{o(d^-1)}, we do not know how to do it with only calculus. However, since there are terms in the integration which look similar to sampled mean of random variables, we will construct a probability model and use large deviation technique to solve the problem. 

\

\noindent Let $X_1,X_2,\cdots, X_d$ be i.i.d. uniform random variables on $[-\pi,\pi]$, and let $Y_i=\cos(X_i)$, $i=1,2,\cdots,d$, which are also i.i.d random variables. Moreover, one can immediately check that $Y_i\in [-1,1]$ and
$$
E[Y_i]=\frac{1}{2\pi}\int_{-\pi}^\pi \cos(x) dx=0,
$$ 
and
$$
\var(Y_i)=\frac{1}{2\pi}\int_{-\pi}^\pi \cos^2(x) dx=\frac{1}{2}. 
$$
Then the sampled mean 
$$
\bar Y_d=\frac{Y_1+Y_2+\cdots+Y_d}{d}\in [-1,1]
$$
and let $Z_d$ be the nonnegative random variable
$$
Z_d=\left\{
\begin{aligned}
&\frac{\bar Y_d^4}{1-\bar Y_d}, \ \ \bar Y_d<1\\
&0, \hspace{0.5 in} \bar Y_d=1.
\end{aligned}
\right.
$$ 
Then according to our construction and the definition of $\mathcal{E}_d$, we have 
\beq
\label{expectation}
\mathcal{E}_d=E[Z_d]. 
\eeq
To bound \eqref{expectation}, define the event $A_d=\{|\bar Y_d|\le d^{-0.4}\}$, then for any $d\ge 6$, 
$$
E[Z_d]\le \frac{d^{-1.6}}{1-d^{-0.4}}P(A_d)+E[Z_d\ind_{A^c_d}]\le 2d^{-1.6}+E[Z_d\ind_{A^c_d}].
$$
Then define the event 
$$
B_d=\left\{\sqrt{X_1^2+X_2^2+\cdots+X_d^2}\le\frac{1}{d}\right\}.
$$
We can further have 
\beq
\label{expectation decomposition_2}
\begin{aligned}
E[Z_d]&\le 2d^{-1.6}+E[Z_d\ind_{A^c_d\cap B_d^c}]+E[Z_d\ind_{A^c_d\cap B_d}]\\
&\le 2d^{-1.6}+P(A_d^c)\max_{\omega\in B_d^c} \{Z_d(\omega)\}+E[Z_d\ind_{B_d}]. 
\end{aligned}
\eeq
Note that the first term in \eqref{expectation decomposition_2} is already of order $o(d^{-1})$. In order to control the second term, we first have that whenever $\bar Y_d\not=0$,
$$
Z_d=\frac{\bar Y_d^4}{1-\bar Y_d}\le \frac{1}{1-\bar Y_d}=\frac{d}{\sum_{i=1}^d1-\cos(X_i)}.
$$
Then according to \textcircled{3}, we have
\beq
\label{upper bound Z_d}
Z_d\le \frac{d}{c\sum_{i=1}^d X_i^2}
\eeq
which implies that
\beq
\label{upper bound Z_d 1}
\max_{\omega\in B_d^c} \{Z_d(\omega)\}\le \frac{d^3}{c}. 
\eeq
On the other hand, noting that we have $d^{-0.4}$ which is of a higher order than the central limit scaling $d^{-0.5}$. Cram\'{e}r's Theorem can be used to give an exponentially small term on $P(A_d^c)$. For the self-completeness of this paper we also include details of the large deviation argument used here. Note that 
$$
P(A_d^c)=P(\bar Y_d>d^{-0.4})+P(\bar Y_d<-d^{-0.4})
$$
and that $E[\bar Y_d]=E[-\bar Y_d]=0$, and that $\var(\bar Y_d)=\var(-\bar Y_d)=1/2$, we can without loss of generality concentrate on controlling the first probability. The calculations we have below will work exactly the same on the second one if we take $-\bar Y_d$ as our random variable. Note that for any $\theta>0$
$$
E[\exp(\bar Y_d \theta)]\le e^\theta<\infty. 
$$
And by Markov inequality, 
$$
P(\bar Y_d>d^{-0.4})\exp(d^{-0.4}\theta)\le E[\exp(\bar Y_d \theta)]
$$
for any $\theta>0$. Noting that $\bar Y_d$ is the sampled mean of i.i.d. sequence $Y_1,\cdots, Y_d$, one can rewrite the inequality above as
$$
P(\bar Y_d>d^{-0.4})\le \left(\frac{E\big[\exp(Y_1\frac{\theta}{d})\big]}{\exp\big(d^{-0.4}\frac{\theta}{d}\big)}\right)^d.
$$
Let $\theta_1=\theta/d$, then the inequality above can be further simplified as 
\beq
\label{large deviation 1}
P(\bar Y_d>d^{-0.4})\le\exp\left[d\Big(-d^{-0.4}\theta_1+\log\big(E[\exp(Y_1\theta_1)]\big) \Big) \right]
\eeq
which holds true for all $d$ and any $\theta_1>0$. Now let $\theta_1=d^{-0.4}$. Then we have that for all $d\ge 2$
$$
|Y_1\theta_1|\le \theta_1=d^{-0.4}<1=c_1
$$
which by \textcircled{4} implies 
$$
E[\exp(Y_1\theta_1)]\le E[1+Y_1\theta_1+(Y_1\theta_1)^2]=1+\frac{\theta_1^2}{2}. 
$$
Then by \textcircled{5} we can further have 
\beq
\label{large deviation 2}
P(\bar Y_d>d^{-0.4})\le\exp\left[d\left(-d^{-0.4}\theta_1+\frac{\theta_1^2}{2}\right) \right]=\exp(-d^{0.2}/2). 
\eeq
Combining \eqref{upper bound Z_d 1} and \eqref{large deviation 2}, 
\beq
\label{second term}
P(A_d^c)\max_{\omega\in B_d^c} \{Z_d(\omega)\}\le \frac{1}{c} d^3\exp(-d^{0.2}/2)=o(d^{-1}). 
\eeq
Finally, for the last term in \eqref{expectation decomposition_2}, noting that the upper bound of $Z_d$ in \eqref{upper bound Z_d} and the definition of $B_d$, we have 
\beq
\label{third term 1}
E[Z_d\ind_{B_d}]\le \left(\frac{1}{2\pi}\right)^d\int_{B_{2,d}(0,1/d)} \frac{d}{c\sum_{i=1}^d x_i^2} dx_1dx_2\cdots dx_d
\eeq
where $B_{2,d}(0,1/d)$ is the $L_2$ ball in $\RR^d$ centered at 0 with radius $1/d$. For the integral in \eqref{third term 1}, use the $d$ dimensional spherical coordinates
\begin{align*}
& x_1=r\cos(\theta_1)\\
& x_2=r\sin(\theta_1)\cos(\theta_2)\\
& x_3=r\sin(\theta_1)\sin(\theta_2)\cos(\theta_3)\\
&\cdot\\
&\cdot\\
&\cdot\\
&x_{d-1}=r\sin(\theta_1)\sin(\theta_2)\cdots \sin(\theta_{d-2})\cos(\theta_{d-1})\\
&x_{d-1}=r\sin(\theta_1)\sin(\theta_2)\cdots \sin(\theta_{d-2})\sin(\theta_{d-1})
\end{align*}
where $r\ge 0$, $\theta_i\in[0,\pi]$ for $i=1,2,\cdots,d-2$, and $\theta_{d-1}\in[0,2\pi]$. Then we have
\beq
\label{third term 2}
\begin{aligned}
&\left(\frac{1}{2\pi}\right)^d\int_{B_{2,d}(0,1/d)} \frac{d}{c\sum_{i=1}^d x_i^2} dx_1dx_2\cdots dx_d\\
&=\frac{d}{c(2\pi)^d}\int_{(0,1/d]\times[0,\pi]^{d-2}\times[0,2\pi]} r^{d-3}\prod_{i=1}^{d-2}\sin^{n-1-i}(\theta_i) dr d\theta_1d\theta_2\cdots d\theta_{d-1}\\
&\le \frac{d}{c(2\pi)^d}\int_{(0,1/d]\times[0,\pi]^{d-2}\times[0,2\pi]} r^{d-3} dr d\theta_1d\theta_2\cdots d\theta_{d-1}\\
&\le \frac{d}{c 2^{d-1}} \int_0^{1/d} r^{d-3} dr=\frac{1}{c 2^{d-1}} \cdot \frac{d}{d-2}\cdot d^{2-d}=o(d^{-1}). 
\end{aligned}
\eeq
Combining \eqref{third term 1} and \eqref{third term 2}, we have 
\beq
\label{third term}
E[Z_d\ind_{B_d}]\le\frac{1}{c 2^{d-1}} \cdot \frac{d}{d-2}\cdot d^{2-d}=o(d^{-1}). 
\eeq
Thus in \eqref{second term} and \eqref{third term}, we have shown that all terms in \eqref{expectation decomposition_2} is of order $o(d^{-1})$ and the proof of Theorem \ref{Main Theorem} is complete. 
\qed

\subsection{Returning Probability to the Diagonal Line}

In this section we prove Theorem \ref{Theorem diagonal}. Recalling that 
$$
\hat X_{d-1,n}=\left(X_{d,n}^1-X_{d,n}^2,X_{d,n}^2-X_{d,n}^3,\cdots, X_{d,n}^d-X_{d,n}^{d-1}\right)
$$
we have $X_{d,n}\in l_d$ if and only if $X_{d,n}^1=X_{d,n}^2=\cdots=X_{d,n}^d$, which in turns is equivalent to $\hat X_{d-1,n}=0$. And for the new process $\hat X_{d-1,n}$, one can easily check that it also forms a $d-1$ dimensional random walk with transition probability 
\begin{align*}
&P(\hat X_{d-1,1}=e_{d-1,1})=\frac{1}{2d}\\
&P(\hat X_{d-1,1}=-e_{d-1,1})=\frac{1}{2d}\\
&P(\hat X_{d-1,1}=e_{d-1,1}-e_{d-1,2})=\frac{1}{2d}\\
&P(\hat X_{d-1,1}=-e_{d-1,1}+e_{d-1,2})=\frac{1}{2d}\\
&P(\hat X_{d-1,1}=e_{d-1,2}-e_{d-1,3})=\frac{1}{2d}\\
&P(\hat X_{d-1,1}=-e_{d-1,2}+e_{d-1,3})=\frac{1}{2d}\\
&\cdot\\
&\cdot\\
&\cdot\\
&P(\hat X_{d-1,1}=e_{d-1,d-2}-e_{d-1,d-1})=\frac{1}{2d}\\
&P(\hat X_{d-1,1}=-e_{d-1,d-2}+e_{d-1,d-1})=\frac{1}{2d}\\
&P(\hat X_{d-1,1}=e_{d-1,d-1})=\frac{1}{2d}\\
&P(\hat X_{d-1,1}=-e_{d-1,d-1})=\frac{1}{2d}
\end{align*}
so that $\hat X_{d-1,n}$ also forms a finite range symmetric random walk. Moreover, the characteristic function of $\hat X_{d-1,n}$ is given by 
\beq
\hat\phi_{d-1}(\theta)=\frac{1}{d}\left(\cos(\theta_1)+\sum_{i=1}^{d-2}\cos(\theta_{i+1}-\theta_{i})+\cos(\theta_{d-1}) \right).
\eeq
And we also have 
$$
\hat \tau_{d-1,0}=\inf\{n>1: \hat X_{d-1,n}=0\}
$$
together with
$$
P(\hat \tau_{d-1,0}<\infty)=1-\hat G_{d-1}^{-1}(0)
$$
where $\hat G_{d-1}(\cdot)$ is the Green's function for $\hat X_{d-1,n}$. I.e., 
\beq
\label{hat G(0)}
\hat G_{d-1}(0)=\left(\frac{1}{2\pi}\right)^{d-1}\int_{[-\pi,\pi]^{d-1}}\frac{1}{1-\hat\phi_{d-1}(\theta)}d\theta. 
\eeq
Then again we only need to show that 
\beq
\label{limit hat G(0)}
\lim_{d\to\infty}2d[\hat G_{d-1}(0)-1]=1. 
\eeq
Moreover, using exactly the same embedded random walk argument as in Lemma 1 of \cite{Song96} on $X_{d,n}$ and $\tau_{d,l_d}$, one can immediately have $P_{d+1}\le P_{d}$, which is also equivalent to $\hat G_{d}(0)\le \hat G_{d-1}(0)$. So in order to show \eqref{limit hat G(0)}, we can without loss of generality concentrate on even $d$'s. 

\

\noindent Then again we have 
\beq
\label{hat G(0) decomposition}
 \begin{aligned}
 \hat G_{d-1}(0)=1&+\left(\frac{1}{2\pi}\right)^{d-1}\int_{[-\pi,\pi]^{d-1}}\hat\phi_{d-1}(\theta)d\theta\\
&+\left(\frac{1}{2\pi}\right)^{d-1}\int_{[-\pi,\pi]^{d-1}}\hat\phi_{d-1}^2(\theta)d\theta\\
&+\left(\frac{1}{2\pi}\right)^{d-1}\int_{[-\pi,\pi]^{d-1}}\hat\phi_{d-1}^3(\theta)d\theta\\
&+\left(\frac{1}{2\pi}\right)^{d-1}\int_{[-\pi,\pi]^{d-1}}\frac{\hat\phi_{d-1}^4(\theta)}{1-\hat\phi_{d-1}(\theta)}d\theta.
 \end{aligned}
\eeq
Note that by \textcircled{1} and \textcircled{2}, for any $i=1,\cdots, d-2$, 
$$
 \begin{aligned}
&\left(\frac{1}{2\pi}\right)^{d-1}\int_{[-\pi,\pi]^{d-1}} \cos(\theta_{i+1}-\theta_{i}) d\theta\\
& \hspace{0.2 in} =\left(\frac{1}{2\pi}\right)^2 \int_{-\pi}^{\pi}  \int_{-\pi}^{\pi}\cos(\theta_{i+1}-\theta_{i}) d\theta_{i+1} d\theta_{i}\\
& \hspace{0.2 in} =\left(\frac{1}{2\pi}\right)^2 \int_{-\pi}^{\pi}  \int_{-\pi-\theta_i}^{\pi-\theta_i}\cos(\theta) d\theta d\theta_{i}\\
& \hspace{0.2 in} =\left(\frac{1}{2\pi}\right)^2 \int_{-\pi}^{\pi}  0 d\theta_{i}=0,
 \end{aligned}
$$
$$
 \begin{aligned}
&\left(\frac{1}{2\pi}\right)^{d-1}\int_{[-\pi,\pi]^{d-1}} \cos^2(\theta_{i+1}-\theta_{i}) d\theta\\
& \hspace{0.2 in} =\left(\frac{1}{2\pi}\right)^2 \int_{-\pi}^{\pi}  \int_{-\pi}^{\pi}\cos^2(\theta_{i+1}-\theta_{i}) d\theta_{i+1} d\theta_{i}\\
& \hspace{0.2 in} =\left(\frac{1}{2\pi}\right)^2 \int_{-\pi}^{\pi}  \int_{-\pi-\theta_i}^{\pi-\theta_i}\cos^2(\theta) d\theta d\theta_{i}\\
& \hspace{0.2 in} =\left(\frac{1}{2\pi}\right)^2 \int_{-\pi}^{\pi}  \pi d\theta_{i}=\frac{1}{2},
 \end{aligned}
$$
and
$$
 \begin{aligned}
&\left(\frac{1}{2\pi}\right)^{d-1}\int_{[-\pi,\pi]^{d-1}} \cos^3(\theta_{i+1}-\theta_{i}) d\theta\\
& \hspace{0.2 in} =\left(\frac{1}{2\pi}\right)^2 \int_{-\pi}^{\pi}  \int_{-\pi}^{\pi}\cos^3(\theta_{i+1}-\theta_{i}) d\theta_{i+1} d\theta_{i}\\
& \hspace{0.2 in} =\left(\frac{1}{2\pi}\right)^2 \int_{-\pi}^{\pi}  \int_{-\pi-\theta_i}^{\pi-\theta_i}\cos^3(\theta) d\theta d\theta_{i}\\
& \hspace{0.2 in} =\left(\frac{1}{2\pi}\right)^2 \int_{-\pi}^{\pi}  0 d\theta_{i}=0. 
 \end{aligned}
$$
It is easy to see the first 3 integration terms will be exactly the same as in Section \ref{sec:returnzeroapp}. I.e.,
$$
\left(\frac{1}{2\pi}\right)^{d-1}\int_{[-\pi,\pi]^{d-1}}\hat\phi_{d-1}(\theta)d\theta=\left(\frac{1}{2\pi}\right)^{d-1}\int_{[-\pi,\pi]^{d-1}}\hat\phi_{d-1}^3(\theta)d\theta=0
$$
and 
$$
\left(\frac{1}{2\pi}\right)^{d-1}\int_{[-\pi,\pi]^{d-1}}\hat\phi_{d-1}^2(\theta)d\theta=\frac{1}{2d}.
$$
Thus, we again have 
\beq
 \hat G_{d-1}(0)=1+\frac{1}{2d}+\left(\frac{1}{2\pi}\right)^{d-1}\int_{[-\pi,\pi]^{d-1}}\frac{\hat\phi_{d-1}^4(\theta)}{1-\hat\phi_{d-1}(\theta)}d\theta.
\eeq
And we only need to show that for sufficiently large even $d$
\beq
\label{o(d^-1)2}
\mathcal{\hat{E}}_{d-1}=\left(\frac{1}{2\pi}\right)^{d-1}\int_{[-\pi,\pi]^{d-1}}\frac{\hat\phi_{d-1}^4(\theta)}{1-\hat\phi_{d-1}(\theta)}d\theta=o(d^{-1}).
\eeq
To show \eqref{o(d^-1)2}, we again rewrite the integral above into the expectation of some function of a sequence of i.i.d. random variables. Let $\hat X_1,\hat X_2,\cdots, \hat X_{d-1}$ be i.i.d. uniform random variables on $[-\pi,\pi]$, we can define 
$$
\hat Y_{d-1}=\frac{1}{d}\left(\cos(\hat X_1)+\sum_{i=1}^{d-2}\cos(\hat X_{i+1}-\hat X_{i})+\cos(\hat X_{d-1}) \right)\in [-1,1]
$$
and 
$$
\hat Z_{d-1}=\left\{
\begin{aligned}
&\frac{\hat Y_{d-1}^4}{1-\hat Y_{d-1}}, \ \  \hat Y_{d-1}<1\\
&0, \hspace{0.65 in} \hat Y_{d-1}=1.
\end{aligned}
\right.
$$ 
Then according to our construction and the definition of $\mathcal{\hat E}_d$, we have 
\beq
\label{expectation hat}
\mathcal{\hat E}_{d-1}=E[\hat Z_{d-1}]. 
\eeq
Again let event $\hat A_{d-1}=\{|\hat Y_{d-1}|\le d^{-0.4}\}$, then for any $d\ge 6$, 
$$
E[\hat Z_{d-1}]\le \frac{d^{-1.6}}{1-d^{-0.4}}P(\hat A_{d-1})+E[\hat Z_{d-1}\ind_{\hat A^c_{d-1}}]\le 2d^{-1.6}+E[\hat Z_{d-1}\ind_{\hat A^c_{d-1}}].
$$
Then let 
$$
\hat B_{d-1}=\left\{\sqrt{\hat X_1^2+\hat X_2^2+\cdots+\hat X_{d-1}^2}\le\frac{1}{d}\right\}.
$$
We can further have 
\beq
\label{expectation decomposition}
\begin{aligned}
E[\hat Z_{d-1}]&\le 2d^{-1.6}+E[\hat Z_{d-1}\ind_{\hat A^c_{d-1}\cap \hat B_{d-1}^c}]+E[\hat Z_{d-1}\ind_{A^c_{d-1}\cap \hat B_{d-1}}]\\
&\le 2d^{-1.6}+P(\hat A^c_{d-1})\max_{\omega\in \hat B_{d-1}^c} \{\hat Z_{d-1}(\omega)\}+E[\hat Z_{d-1}\ind_{\hat B_{d-1}}]. 
\end{aligned}
\eeq
To control the third term in \eqref{expectation decomposition}, note that for any $d\ge 3$, and any $i=1,2,\cdots, d-2$, within the event $\hat B_{d-1}$,
$$
|X_i-X_j|\le \frac{2}{d}\le \pi. 
$$
Thus within the event $\hat B_{d-1}\cap\{\hat Y_{d-1}\not=1\}$, we have by \textcircled{3}
\beq
\label{hat term 3 1}
\hat Z_{d-1}\le \frac{1}{1-\hat Y_{d-1}}\le \frac{d}{c\left(\hat X_1^2+\sum_{i=1}^{d-2} |\hat X_{i+1}-\hat X_i|^2+\hat X_{d-1}^2\right)}.
\eeq
Moreover, for any $(x_1,x_2,\cdots, x_{d-1})\in \RR^d$, we have 
$$
x_1^2+\sum_{i=1}^{d-2} |x_{i+1}-x_i|^2+x_{d-1}^2\ge \sigma(d-1,1)^2 \left(\sum_{i=1}^{d-1}x_i^2  \right)
$$
where $\sigma(d-1,1)$ is the smallest singular value of $d-1$ by $d-1$ Jordan block with $\lambda=1$. In \cite{singular_value} it has been proved that 
$$
\sigma(d-1,1)\ge \frac{1}{d-1}\ge \frac{1}{d}.
$$
Thus we have
\beq
\label{hat term 3 2}
x_1^2+\sum_{i=1}^{d-2} |x_{i+1}-x_i|^2+x_{d-1}^2\ge d^{-2} \left(\sum_{i=1}^{d-1}x_i^2  \right).
\eeq
Combining \eqref{hat term 3 1} and \eqref{hat term 3 2} gives us 
\beq
\label{hat term 3 3}
\hat Z_{d-1}\le  \frac{d^3}{c\sum_{i=1}^{d-1} \hat X_i^2}
\eeq
which implies that
\beq
\label{hat term 3 3}
E[\hat Z_{d-1}\ind_{\hat B_{d-1}}]\le \left(\frac{1}{2\pi}\right)^{d-1}\int_{B_{2,d-1}(0,1/d)} \frac{d^3}{c\sum_{i=1}^d x_i^2} dx_1dx_2\cdots dx_{d-1}.
\eeq
Using exactly the same argument of spherical coordinates, we have 
\beq
\label{hat term 3}
E[\hat Z_{d-1}\ind_{\hat B_{d-1}}]\le \frac{1}{c 2^{d-2}}\frac{d^3}{d-2} d^{2-d}=o(d^{-1}). 
\eeq
Then for the second term $P(\hat A^c_{d-1})\max_{\omega\in \hat B_{d-1}^c} \{\hat Z_{d-1}(\omega)\}$, we first control the probability $P(\hat A^c_{d-1})$ for sufficiently large even number $d=2n$. Note that 
$$
\hat Y_{d-1}\le \frac{2}{d}+\frac{1}{d}\sum_{i=1}^{d-2}\cos(\hat X_{i+1}-\hat X_{i})
$$
and that 
$$
\hat Y_{d-1}\ge -\frac{2}{d}+\frac{1}{d}\sum_{i=1}^{d-2}\cos(\hat X_{i+1}-\hat X_{i}).
$$
So we have for sufficiently large even number $d=2n$
$$
P(\hat Y_{d-1}\ge d^{-0.4})\le P\left(\frac{1}{d}\sum_{i=1}^{d-2}\cos(\hat X_{i+1}-\hat X_{i})\ge \frac{d^{-0.4}}{2}\right)
$$
and 
$$
P(\hat Y_{d-1}\le -d^{-0.4})\le P\left(\frac{1}{d}\sum_{i=1}^{d-2}\cos(\hat X_{i+1}-\hat X_{i})\le \frac{d^{-0.4}}{2}\right).
$$
Moreover note that for $d=2n$ we have 
$$
\frac{1}{d}\sum_{i=1}^{d-2}\cos(\hat X_{i+1}-\hat X_{i})= \frac{n-1}{2n}(\hat Y_{1,d-1}+\hat Y_{2,d-1})
$$
where
$$
\hat Y_{1,d-1}=\frac{1}{n-1}\sum_{i=1}^{n-1}\cos(\hat X_{2i+1}-\hat X_{2i})
$$
and
$$
\hat Y_{2,d-1}=\frac{1}{n-1}\sum_{i=1}^{n-1}\cos(\hat X_{2i}-\hat X_{2i+1}).
$$
Noting that $\hat Y_{1,d-1}$ and $\hat Y_{2,d-1}$ are again sampled means of i.i.d. random variables with expectation 0 and variance $1/2$. Although now we have $\hat Y_{1,d-1}$ and $\hat Y_{2,d-1}$ are correlated, we can still have the upper bound
$$
P\left(\frac{1}{d}\sum_{i=1}^{d-2}\cos(\hat X_{i+1}-\hat X_{i})\ge \frac{d^{-0.4}}{2}\right)\le P(\hat Y_{1,d-1}\ge \frac{d^{-0.4}}{2})+P(\hat Y_{2,d-1}\ge \frac{d^{-0.4}}{2})
$$
and
$$
P\left(\frac{1}{d}\sum_{i=1}^{d-2}\cos(\hat X_{i+1}-\hat X_{i})\le -\frac{d^{-0.4}}{2}\right)\le P(\hat Y_{1,d-1}\le -\frac{d^{-0.4}}{2})+P(\hat Y_{2,d-1}\le -\frac{d^{-0.4}}{2}).
$$
Apply Cram\'{e}r's Theorem on $\hat Y_{1,d-1}$ and $\hat Y_{2,d-1}$, we again have that there is some $u,U\in (0,\infty)$ (actually we can use $u=1/16$ and $U=2$) such that
\beq
\label{hat term 2 1}
P(\hat A^c_{d-1})\le U\exp(-u d^{0.2}).
\eeq
Lastly for $\max_{\omega\in \hat B_{d-1}^c} \{\hat Z_{d-1}(\omega)\}$, note that the range of $\hat X_{i+1}-\hat X_{i}$ is $[-2\pi,2\pi]$ which is no longer a subset of $[-3\pi/2,3\pi/2]$, we will not be able to use \textcircled{3} directly to find an upper bound. to overcome this issue, we have the following lemma:
\begin{lemma}
\label{Lemma maximum}
For any $d$, consider the following two subsets of $\RR^{d-1}$:
$$
D_1(d)=[-\pi,\pi]^{d-1}\cap \left\{\frac{1-\cos(x_1)}{d}+\sum_{i=1}^{d-2}\frac{1-\cos(x_{i+1}-x_i)}{d}+\frac{1-\cos(x_{d-1})}{d}\le d^{-7}\right\}
$$
and
$$
D_2(d)=\left\{(x_1,\cdots, x_{d-1}): \ |x_i|\le \frac{i}{\sqrt{c}d^3}, \ \forall i=1,2,\cdots, d-1\right\}
$$
where $c$ is the constant in \textcircled{3}. Then there is some $d_0<\infty$ such that for all $d\ge d_0$, $D_1(d)\subseteq D_2(d)$. 
\end{lemma}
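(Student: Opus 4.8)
The plan is to prove the inclusion by induction on the coordinate index $i$, turning the single scalar constraint defining $D_1(d)$ into a coordinatewise bound. First I would observe that every summand $\tfrac{1-\cos(\,\cdot\,)}d$ appearing in the definition of $D_1(d)$ is nonnegative, so on $D_1(d)$ each of them is $\le d^{-7}$; in particular $1-\cos(x_1)\le d^{-6}$ and $1-\cos(x_{i+1}-x_i)\le d^{-6}$ for every $i$. For the base case, since $x_1\in[-\pi,\pi]\subset[-3\pi/2,3\pi/2]$, fact \textcircled{3} gives $c\,x_1^2\le 1-\cos(x_1)\le d^{-6}$, i.e. $|x_1|\le \tfrac1{\sqrt c\,d^3}$, which is exactly the $i=1$ case of the bound defining $D_2(d)$.

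For the inductive step I would assume $|x_i|\le \tfrac i{\sqrt c\,d^3}$ and aim to deduce $|x_{i+1}|\le\tfrac{i+1}{\sqrt c\,d^3}$. The point is that the induction hypothesis already places $x_{i+1}-x_i$ inside the interval $[-3\pi/2,3\pi/2]$ on which \textcircled{3} is valid: since $i\le d-1$ we have $|x_i|\le\tfrac1{\sqrt c\,d^2}$, which tends to $0$, so $|x_{i+1}-x_i|\le|x_{i+1}|+|x_i|\le\pi+o(1)\le 3\pi/2$ for all $d\ge d_0$ (and with $c=\tfrac4{9\pi^2}$ one checks any $d_0\ge 2$ works). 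Then \textcircled{3} yields $c\,(x_{i+1}-x_i)^2\le 1-\cos(x_{i+1}-x_i)\le d^{-6}$, hence $|x_{i+1}-x_i|\le\tfrac1{\sqrt c\,d^3}$, and the triangle inequality closes the induction. Iterating from $i=1$ up to $d-1$ gives $(x_1,\dots,x_{d-1})\in D_2(d)$, proving $D_1(d)\subseteq D_2(d)$ for $d\ge d_0$.

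The step I expect to be the only real obstacle — and the reason the lemma is phrased through these two auxiliary regions rather than by applying \textcircled{3} termwise from the start — is precisely this domain check: a priori each increment $x_{i+1}-x_i$ only lies in $[-2\pi,2\pi]$, where $1-\cos$ is \emph{not} bounded below by a constant times the square (it vanishes again near $\pm2\pi$), so \textcircled{3} cannot be invoked directly. The induction is designed to sidestep this: the accumulated smallness of the earlier coordinates forbids $x_{i+1}-x_i$ from being near $\pm2\pi$ (that would require $x_{i+1}$ near $\pm\pi$ and $x_i$ near $\mp\pi$, contradicting $|x_i|=O(d^{-2})$), so one stays inside $[-3\pi/2,3\pi/2]$ at every stage; everything else is a routine propagation of the $\tfrac1{\sqrt c\,d^3}$-bound along the telescoping chain $x_1,\dots,x_{d-1}$, which in passing pins down $d_0$ explicitly.
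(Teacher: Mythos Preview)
Your proof is correct and essentially identical to the paper's. The only cosmetic difference is that the paper frames the inductive step as a contradiction from the minimal failing index $k=\inf\{i:|x_i|>i/(\sqrt c\,d^3)\}$, whereas you run a forward induction; the two are logically equivalent and use exactly the same estimates (nonnegativity of each summand, \textcircled{3} applied once the increment is trapped in $[-3\pi/2,3\pi/2]$, and the triangle inequality to propagate the bound).
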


\begin{proof}
Let $d_0$ be a positive integer such that $\sqrt{c} d_0^2>1$. Then for any $d\ge d_0$ and any $(x_1,\cdots, x_{d-1})\in D_1(d)$. By the definition of $D_1(d)$ and the fact that $x_1\in [-\pi, \pi]$,  we must have 
$$
\frac{cx_1^2}{d}\le \frac{1-\cos(x_1)}{d}\le d^{-7}
$$
which implies that 
\beq
\label{x_1}
|x_1|\le \frac{1}{\sqrt{c}d^3}.
\eeq
Now suppose there is a $(x_1,\cdots, x_{d-1})\in D_1(d)\cap D_2(d)^c$. Let $k=\inf\{i:  \ |x_i|> \frac{i}{\sqrt{c}d^3}\}$. Then \eqref{x_1} ensures that $k>1$. Then for $x_{k-1}$, 
$$
|x_{k-1}|\le  \frac{k-1}{\sqrt{c}d^3}\le  \frac{d}{\sqrt{c}d^3}\le \frac{1}{\sqrt{c} d_0^2}<1. 
$$
Thus we must have $|x_{k-1}-x_k|\le 3\pi/2$, which gives that 
$$
\frac{1-\cos(x_{k}-x_{k-1})}{d}\ge \frac{c}{d}|x_{k}-x_{k-1}|^2\ge \frac{c}{d} (|x_k|-|x_{k-1}|)^2>\frac{1}{d^7}. 
$$
And now we have a contradiction. 
\end{proof}
\noindent Moreover, we have that for any $(x_1,\cdots, x_{d-1})\in D_2(d)$,
$$
x_1^2+\cdots+x_{d-1}^2\le \frac{\sum_{i=1}^{d-1} i^2}{cd^6}=O(d^{-3})=o(d^{-2}).
$$
Thus there is another $d_1<\infty$ such that for all $d\ge d_1$,
\beq
D_1(d)\subset D_2(d)\subset B_{2,d}(0,1/d). 
\eeq
This means for any $d\ge d_1$, and any $(x_1,\cdots, x_{d-1})\in B_{2,d}(0,1/d)^c$, 
\beq
\label{hat term 2 2}
\frac{1}{1-\frac{1}{d}\left(\cos(x_1)+\sum_{i=1}^{d-2}\cos(x_{i+1}-x_i)+\cos(x_{d-1})\right)}\le d^7,
\eeq
which gives us 
\beq
\label{hat term 2 3}
\max_{\omega\in \hat B_{d-1}^c} \{\hat Z_{d-1}(\omega)\}\le d^7. 
\eeq
Thus combine \eqref{hat term 2 1} and \eqref{hat term 2 3} we finally have 
\beq
\label{hat term 2}
P(\hat A^c_{d-1})\max_{\omega\in \hat B_{d-1}^c} \{\hat Z_{d-1}(\omega)\} \le Ud^7\exp(-u d^{0.2})=o(d^{-1})
\eeq
and the proof of Theorem \ref{Theorem diagonal} is complete. 
\qed

\subsection{Proof of Theorem \ref{Theorem return d}}
With the asymptotic of the return of $\{\hat X_{d-1,n}\}_{n=0}^\infty$ obtained in the previous section, we will be able to use a similar but more complicate argument to show the same asymptotic for $\{\hat X_{d-1,n}\}_{n=0}^\infty$ to return to the set $D_{d-1}$. First using again exactly the same embedded random walk argument as in Lemma 1 of \cite{Song96}, it is easy to note that each time $\{\hat X_{d-1,n}\}_{n=0}^\infty$ returns to $D_{d-1}$ is also a time when $\{\hat X_{d-2,n}\}_{n=0}^\infty$, the embedded Markov chain tracking the changes of the first $d-2$ coordinates of  $\{\hat X_{d-1,n}\}_{n=0}^\infty$, which is also a $d-2$ dimensional version of the non simple random walk of interest, returns to $D_{d-2}$. This implies 
$$
\sup_{0\le i\le d-1}P\left(\inf_{0\le j\le d-1}\left\{T^{(i,j)}_{d-1}\right\}<\infty\right)\le \sup_{0\le i\le d-2}P\left(\inf_{0\le j\le d-2}\left\{T^{(i,j)}_{d-2}\right\}<\infty\right)
$$
and we can without loss of generality again concentrate on even numbers of $d$'s. Then for each $i$, one can immediately have 
$$
P\left(\inf_{0\le j\le d-1}\left\{T^{(i,j)}_{d-1}\right\}<\infty\right)\le \sum_{j=0}^{d-1} P\big(T^{(i,j)}_{d-1}<\infty \big). 
$$
Thus, in order to prove Theorem \ref{Theorem return d}, it is sufficient to show that for all sufficiently large even $d$'s, there is a $C<\infty$ such that for any $0\le i\le d-1$
\beq
\label{equation sharp 1}
 \sum_{j=0}^{d-1} P\big(T^{(i,j)}_{d-1}<\infty \big)<\frac{C}{d}. 
\eeq
Then for any $0\le i\not=j\le d-1$, by strong Markov property
$$
P\big(T^{(i,j)}_{d-1}<\infty \big)=\frac{\hat G_{d-1}(e_j-e_i)}{\hat G_{d-1}(0)}. 
$$
and 
$$
P\big(T^{(i,i)}_{d-1}<\infty \big)=\frac{\hat G_{d-1}(0)-1}{\hat G_{d-1}(0)}. 
$$
In Theorem \ref{Theorem diagonal}, we have already proved that $\hat G_{d-1}(0)=1+(2d)^{-1}+o(d^{-1})$. Thus now it is sufficient to show that for any $i$
\beq
\label{equation sharp 2}
 \sum_{j=0}^{d-1} \hat G_{d-1}(e_j-e_i)-1\le \frac{C}{d}. 
\eeq
For any $0\le i, j\le d-1$, we have 
\beq
\label{equation sharp 3}
\hat G_{d-1}(e_j-e_i)=\hat G_{d-1}(e_i-e_j)=\left(\frac{1}{2\pi}\right)^{d-1}\int_{[-\pi,\pi]^{d-1}}\frac{\cos(\theta_j-\theta_i)}{1-\hat\phi_{d-1}(\theta)}d\theta. 
\eeq
Thus, we will concentrate on controlling
$$
G_{d-1}^{(i,j)}=\left(\frac{1}{2\pi}\right)^{d-1}\int_{[-\pi,\pi]^{d-1}}\frac{\cos(\theta_j-\theta_i)}{1-\hat\phi_{d-1}(\theta)}d\theta
$$
with $0\le i<j\le d-1$. Using the same technique as in the proof of Theorem \ref{Theorem diagonal}, and noting that 
$$
\int_{[-\pi,\pi]^{d-1}}\cos(\theta_j-\theta_i) d\theta=0
$$
we first have 
\beq
\begin{aligned}
G_{d-1}^{(i,j)}=&\left(\frac{1}{2\pi}\right)^{d-1}\sum_{p=1}^5\left( \int_{[-\pi,\pi]^{d-1}}\cos(\theta_j-\theta_i)\hat\phi^p_{d-1}(\theta) d\theta\right) \\
&+\left(\frac{1}{2\pi}\right)^{d-1}\int_{[-\pi,\pi]^{d-1}}\frac{\cos(\theta_j-\theta_i)\hat\phi^6_{d-1}(\theta)}{1-\hat\phi_{d-1}(\theta)}d\theta
\end{aligned}
\eeq
and we call
\beq
\mathcal{\hat E}^{(i,j)}_d=\left(\frac{1}{2\pi}\right)^{d-1}\int_{[-\pi,\pi]^{d-1}}\frac{\cos(\theta_j-\theta_i)\hat\phi^6_{d-1}(\theta)}{1-\hat\phi_{d-1}(\theta)}d\theta
\eeq
to be the tail term. For any $0\le i\not=j\le d-1$, let $d(i,j)$ be their distance up to mod$(d)$. I.e.,
$$
d(i,j)=\min\{|j-i|, d-|j-i|\}\ge 1. 
$$
The reason we want to have the distance under mod$(d)$ is that our non simple random walk $\{\hat X_{d-1,n}\}_{n=0}^\infty$ has some ``periodic boundary condition" where we need one transition to move from $e_{d-1,d-1}$ to $e_{d-1,0}$. Then we have the following lemma which implies that for all but a finite number of $(i,j)$'s, the tail term $\mathcal{\hat E}^{(i,j)}_d$ is actually all we get for $G_{d-1}^{(i,j)}$.   
\begin{lemma}
\label{Lemma tail}
For any $k\in \ZZ^+$ and any $0\le i\not=j\le d-1$ such that $d(i,j)>k$, 
\beq
\label{equation tail 1}
 \int_{[-\pi,\pi]^{d-1}}\cos(\theta_j-\theta_i)\hat\phi^k_{d-1}(\theta) d\theta=0. 
\eeq
\end{lemma}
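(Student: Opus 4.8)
The plan is to expand $\hat\phi_{d-1}^k$ combinatorially and reduce the vanishing of \eqref{equation tail 1} to an elementary parity obstruction coming from the cyclic structure of the walk $\hat X_{d-1,n}$. First I would adopt the convention $\theta_0=\theta_d=0$, so that the summands of $\hat\phi_{d-1}(\theta)=\frac1d\big(\cos\theta_1+\sum_{i=1}^{d-2}\cos(\theta_{i+1}-\theta_i)+\cos\theta_{d-1}\big)$ are exactly the $d$ terms $\cos(\theta_u-\theta_v)$ as $(u,v)$ ranges over the edges of the cycle graph $C_d$ on vertex set $\{0,1,\dots,d-1\}$, with the variable at vertex $0$ frozen to $0$ (this is consistent with $e_{d-1,0}=0$, and it is also why $d(i,j)$ is naturally the cyclic distance). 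Expanding the $k$-th power, $\cos(\theta_j-\theta_i)\,\hat\phi_{d-1}^k(\theta)$ becomes $d^{-k}$ times a sum, over ordered $k$-tuples $(e_1,\dots,e_k)\in E(C_d)^k$, of integrands of the form $\cos(\theta_j-\theta_i)\prod_{l=1}^k\cos(\theta_{e_l})$, so it suffices to show that each of these integrates to $0$ over $[-\pi,\pi]^{d-1}$ whenever $d(i,j)>k$.

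For a fixed tuple I would form the multigraph $H$ on $\{0,\dots,d-1\}$ whose edge multiset is $\{(i,j)\}\cup\{e_1,\dots,e_k\}$, so $|E(H)|=k+1$. The key claim is that $H$ has some vertex $v\neq 0$ of odd degree. Granting this, the variable $\theta_v$ appears in exactly $\deg_H(v)$ of the cosine factors, each of the form $\cos(\theta_v-a)$ with $a$ not depending on $\theta_v$; performing the $\theta_v$-integration first and invoking fact \textcircled{6} (a product of an odd number of shifted cosines has zero integral over a period) kills the entire term, and summing over tuples yields \eqref{equation tail 1}.

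To prove the claim I would argue by contradiction. If every $v\neq 0$ had even degree then, since $\sum_v\deg_H(v)=2(k+1)$ is even, $\deg_H(0)$ would be even as well, so $H$ is an even multigraph; reducing edge multiplicities mod $2$ produces a simple even subgraph $\bar H$ of $C_d\cup\{(i,j)\}$. Because $k\ge 1$ and $d(i,j)>k$ force $d(i,j)\ge 2$, the pair $(i,j)$ is a genuine chord, so $(i,j)\notin E(C_d)$, its multiplicity in $H$ is exactly $1$, and hence $(i,j)\in\bar H$. The cycle space of $C_d\cup\{(i,j)\}$ has dimension $2$, so its only even subgraphs are $\varnothing$, $E(C_d)$, and the two cycles $\{(i,j)\}\cup P$ with $P$ one of the two arcs of $C_d$ joining $i$ and $j$; of these, the ones containing $(i,j)$ are precisely the latter two, each with at least $d(i,j)+1$ edges since both arcs have length $\ge d(i,j)$. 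Therefore $k+1=|E(H)|\ge|E(\bar H)|\ge d(i,j)+1$, contradicting $d(i,j)>k$, and the claim follows.

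The routine parts are the multinomial expansion and the single-variable integration via facts \textcircled{1}, \textcircled{2}, \textcircled{6}; the step that carries the actual content, and the one I expect to need the most care, is the final paragraph: the observation that any even subgraph of $C_d\cup\{(i,j)\}$ passing through the chord $(i,j)$ must spend at least $d(i,j)$ further cycle-edges to close it up, which is exactly why $d(i,j)>k$ is the right threshold. A little bookkeeping is also needed to make the identification $\theta_0=\theta_d=0$ precise and to dispose of the boundary cases $i=0$ or $j\in\{1,d-1\}$, which are automatically excluded since they would force $d(i,j)\le 1\le k$.
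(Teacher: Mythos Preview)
Your argument is correct and conceptually cleaner than the paper's own proof. Both proofs begin identically: expand $\hat\phi_{d-1}^k$ and reduce to showing each monomial $\cos(\theta_j-\theta_i)\prod_h\cos^{k_h}(\theta_h-\theta_{h+1})$ integrates to zero, then locate some variable $\theta_v$ with $v\neq 0$ appearing in an odd number of cosine factors so that fact \textcircled{6} applies. The difference is in how this $v$ is found. The paper carries out a direct case analysis, treating $i=0$ separately from $i>0$ and then splitting on the parities of $k_{i-1}+k_i$ and $k_{j-1}+k_j$; in each subcase it uses pigeonhole on one of the arcs of $C_d$ (which has more indices than the total exponent it can carry) to find a transition from an even $k_h$ to an odd $k_{h+1}$ and integrates over $\theta_{h+1}$. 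Your approach packages all of this into a single cycle-space argument: if no nonzero vertex has odd degree in the multigraph $H$ formed by $(i,j)$ together with the $k$ chosen cycle-edges, then $H$ is even, its mod-$2$ reduction is an even subgraph of $C_d\cup\{(i,j)\}$ containing the chord, and any such subgraph has at least $d(i,j)+1$ edges, forcing $k\ge d(i,j)$. This treats all cases uniformly and makes transparent why the cyclic distance is exactly the right threshold.

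One minor quibble: your closing remark that the case $i=0$ is ``automatically excluded'' is not correct---for instance $i=0$, $j=\lfloor d/2\rfloor$ is a legitimate input with $d(i,j)$ large. But this is harmless: your main contradiction $k+1\ge |E(\bar H)|\ge d(i,j)+1$ nowhere uses $i\neq 0$, so the argument already covers that case without any extra bookkeeping.
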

\begin{proof}
By symmetry we can without generality assume that $j>i$. Recalling that 
$$
\hat\phi_{d-1}(\theta)=\frac{1}{d}\left(\cos(\theta_1)+\sum_{i=1}^{d-2}\cos(\theta_{i+1}-\theta_{i})+\cos(\theta_{d-1}) \right),
$$
we have
$$
\hat\phi^k_{d-1}(\theta)=\frac{1}{d^k} \sum_{0\le i_1,i_2,\cdots, i_k\le d-1} \prod_{h=1}^k \cos(\theta_{i_h}-\theta_{i_h+1}),
$$
where we use the convention that $\theta_0=\theta_d=0$. For each term in the summation above, it is easy to see that there is some nonnegative integers $k_0,\cdots, k_{d-1}$ with $\sum_{h=0}^{d-1}k_h=k$ such that we can rewrite the term as 
\beq
\label{equation tail 2}
\prod_{h=0}^{d-1} \cos^{k_h}(\theta_h-\theta_{h+1}). 
\eeq
Thus, it is sufficient to show that for any nonnegative integers $k_0,\cdots, k_{d-1}$ with $\sum_{h=0}^{d-1}k_h=k$ 
\beq
\label{equation tail 3}
\int_{[-\pi,\pi]^{d-1}}\cos(\theta_j-\theta_i)\left(\prod_{h=0}^{d-1} \cos^{k_h}(\theta_h-\theta_{h+1})\right)d\theta=0. 
\eeq
First, if $i=0$ then we have $j>k$ and $d-j>k$. Thus we can separate the product in \eqref{equation tail 2} as 
$$
\prod_{h=0}^{d-1} \cos^{k_h}(\theta_h-\theta_{h+1})=\Pi[0:j-1]\cdot \Pi[j:d-1]
$$
where
$$
\Pi[0:j-1]=\prod_{h=0}^{j-1} \cos^{k_h}(\theta_h-\theta_{h+1}),  \ \Pi[j:d-1]=\prod_{h=j}^{d-1} \cos^{k_h}(\theta_h-\theta_{h+1}). 
$$
Thus $\Pi[0:j-1]$ is a product of $j$ terms while  $\Pi[j:d-1]$ is a product of $d-j$ terms. Note that 
\begin{align*}
&\cos(\theta_j)\left(\prod_{h=0}^{d-1} \cos^{k_h}(\theta_h-\theta_{h+1})\right)\\
&\hspace{0.2 in}=\cos(\theta_j)\cos^{k_{j-1}}(\theta_j-\theta_{j-1})\cos^{k_j}(\theta_j-\theta_{j+1})\left(\prod_{h\in \{0, \cdots, d-1\}\setminus\{j-1,j\}} \cos^{k_h}(\theta_h-\theta_{h+1})\right).
\end{align*}
If $k_{j-1}+k_{j}$ is an even number, integrate over $\theta_j$ and \textcircled{6} gives us \eqref{equation tail 3}. If $k_{j-1}+k_{j}$ is odd, without loss of generality we can assume $k_{j-1}$ is odd. Noting that 
$$
\sum_{h=0}^{j-1}k_h\le k<j, 
$$
by the pigeon hole principle we must have at least one of those $k_h$'s to be zero, which is even. Thus, let $h_0=\sup_{h\le j-1}\{k_h {\rm \ is  \ even}\}$. Then $h_0\in[0,j-2]$, where we use the standard convention that $\sup\{\O\}=-\infty$ and $\inf\{\O\}=\infty$. By definition $k_{h_0+1}$ is odd, and thus
\begin{align*}
&\cos(\theta_j)\left(\prod_{h=0}^{d-1} \cos^{k_h}(\theta_h-\theta_{h+1})\right)\\
=&\cos^{k_{h_0}}(\theta_{h_0+1}-\theta_{h_0})\cos^{k_{h_0+1}}(\theta_{h_0+1}-\theta_{h_0+2})\left(\cos(\theta_j)\prod_{h\in \{0, \cdots, d-1\}\setminus\{h_0,h_0+1\}} \cos^{k_h}(\theta_h-\theta_{h+1})\right).
\end{align*}
Note that $k_{h_0}+k_{h_0+1}$ is odd, so we integrate over $\theta_{h_0+1}$ and \textcircled{6} again gives us \eqref{equation tail 3}. 

Symmetrically, if we have $k_{j}$ is odd, then we can look at $h_1=\inf_{h\ge j}\{k_h {\rm \ is  \ even}\}$ and have $h_1\in [j+1,d-1]$. This in turn implies that $k_{h_1}+k_{h_1-1}$ is odd, so we integrate over $\theta_{h_1}$ and  use \textcircled{6}. We use the same argument in the following discussions.

Similarly if $i>0$, with $d(i,j)>k$ implying $j-i>k$ as well as $d+i-j>k$, we can also have 
$$
\prod_{h=0}^{d-1} \cos^{k_h}(\theta_h-\theta_{h+1})=\Pi[0:i-1]\cdot \Pi[i:j-1]\cdot\Pi[j:d-1]
$$
where
\begin{align*}
&\Pi[0:i-1]=\prod_{h=0}^{i-1} \cos^{k_h}(\theta_h-\theta_{h+1})\\
&\Pi[i:j-1]=\prod_{h=i}^{j-1} \cos^{k_h}(\theta_h-\theta_{h+1})\\
&\Pi[j:d-1]=\prod_{h=j}^{d-1} \cos^{k_h}(\theta_h-\theta_{h+1}). 
\end{align*}
And again note that 
\begin{align*}
&\cos(\theta_j-\theta_i)\left(\prod_{h=0}^{d-1} \cos^{k_h}(\theta_h-\theta_{h+1})\right)\\
&\hspace{0.2 in}=\cos(\theta_j-\theta_i)\cos^{k_{j-1}}(\theta_j-\theta_{j-1})\cos^{k_j}(\theta_j-\theta_{j+1})\left(\prod_{h\in \{0, \cdots, d-1\}\setminus\{j-1,j\}} \cos^{k_h}(\theta_h-\theta_{h+1})\right)
\end{align*}
and that 
\begin{align*}
&\cos(\theta_j-\theta_i)\left(\prod_{h=0}^{d-1} \cos^{k_h}(\theta_h-\theta_{h+1})\right)\\
&\hspace{0.2 in}=\cos(\theta_i-\theta_j)\cos^{k_{i-1}}(\theta_i-\theta_{i-1})\cos^{k_i}(\theta_i-\theta_{i+1})\left(\prod_{h\in \{0, \cdots, d-1\}\setminus\{i-1,i\}} \cos^{k_h}(\theta_h-\theta_{h+1})\right).
\end{align*}
So if either $k_{i-1}+k_{i}$ or $k_{j-1}+k_{j}$ is a even number,\textcircled{6} again gives us \eqref{equation tail 3}. 

Now suppose both $k_{i-1}+k_{i}$ and $k_{j-1}+k_{j}$ are odd. If either $k_i$ or $k_{j-1}$ is odd, we can without loss of generality assume the odd one is $k_{j-1}$. Note that 
$$
\sum_{h=i}^{j-1}k_h\le k<j-i. 
$$
Let $h_0=\sup_{h\le j-1}\{k_h {\rm \ is  \ even}\}$. Then $h_0\in[i,j-2]$. Then again we have that $k_{h_0}+k_{h_0+1}$  is odd, so we can integrate over $\theta_{h_0+1}$ and \textcircled{6} again gives us \eqref{equation tail 3}. 

Otherwise, we must have both $k_{i-1}$ and $k_j$ are odd numbers. Again note that 
$$
\sum_{h=0}^{i-1}k_h+ \sum_{h=j}^{d-1}k_h\le k<d+i-j.
$$
At least one of the $k_h$'s above must be 0, and let's say again without loss of generality it is in $[0,i-1]$. Once more let $h_0=\sup_{h\le i-1}\{k_h {\rm \ is  \ even}\}$. Then $h_0\in[0,i-2]$, and $k_{h_0}+k_{h_0+1}$ is odd so we can once again integrate over $\theta_{h_0+1}$ to use \textcircled{6} to gives us \eqref{equation tail 3}. Combining all the possible situations together, the proof of this lemma is complete. 
\end{proof}

With Lemma \ref{Lemma tail}, one can immediately see that for any $0\le i\le d-1$ and any $j$ such that $d(i,j)\ge 6$, 
$$
G_{d-1}^{(i,j)}=\mathcal{\hat E}^{(i,j)}_d=\left(\frac{1}{2\pi}\right)^{d-1}\int_{[-\pi,\pi]^{d-1}}\frac{\cos(\theta_j-\theta_i)\hat\phi^6_{d-1}(\theta)}{1-\hat\phi_{d-1}(\theta)}d\theta 
$$
which immediately implies that 
\beq
\label{equation sharp 4}
\left|G_{d-1}^{(i,j)}\right|\le \left(\frac{1}{2\pi}\right)^{d-1}\int_{[-\pi,\pi]^{d-1}}\frac{\hat\phi^6_{d-1}(\theta)}{1-\hat\phi_{d-1}(\theta)}d\theta.  
\eeq
Then recalling that in the proof Theorem \ref{Theorem diagonal} we have $\hat X_1,\hat X_2,\cdots, \hat X_{d-1}$ be i.i.d. uniform random variables on $[-\pi,\pi]$ and
$$
\hat Y_{d-1}=\frac{1}{d}\left(\cos(\hat X_1)+\sum_{i=1}^{d-2}\cos(\hat X_{i+1}-\hat X_{i})+\cos(\hat X_{d-1}) \right)\in [-1,1].
$$
And we define
$$
\bar Z_{d-1}=\left\{
\begin{aligned}
&\frac{\hat Y_{d-1}^6}{1-\hat Y_{d-1}}, \ \  \hat Y_{d-1}<1\\
&0, \hspace{0.65 in} \hat Y_{d-1}=1.
\end{aligned}
\right.
$$ 
Then again we have for any $i,j$
\beq
\label{expectation bar}
\mathcal{\hat E}^{(i,j)}_d\le E[\bar Z_{d-1}]. 
\eeq
Recall the event $\hat A_{d-1}=\{|\hat Y_{d-1}|\le d^{-0.4}\}$, then for any $d\ge 6$, 
$$
E[\bar Z_{d-1}]\le \frac{d^{-2.4}}{1-d^{-0.4}}P(\hat A_{d-1})+E[\bar Z_{d-1}\ind_{\hat A^c_{d-1}}]\le 2d^{-1.6}+E[\bar Z_{d-1}\ind_{\hat A^c_{d-1}}].
$$
Then recall
$$
\hat B_{d-1}=\left\{\sqrt{\hat X_1^2+\hat X_2^2+\cdots+\hat X_{d-1}^2}\le\frac{1}{d}\right\}.
$$
We can similarly have 
\beq
\label{bar expectation decomposition}
\begin{aligned}
E[\bar Z_{d-1}]&\le 2d^{-2.4}+E[\bar Z_{d-1}\ind_{\hat A^c_{d-1}\cap \hat B_{d-1}^c}]+E[\bar Z_{d-1}\ind_{A^c_{d-1}\cap \hat B_{d-1}}]\\
&\le 2d^{-2.4}+P(\hat A^c_{d-1})\max_{\omega\in \hat B_{d-1}^c} \{\bar Z_{d-1}(\omega)\}+E[\bar Z_{d-1}\ind_{\hat B_{d-1}}]. 
\end{aligned}
\eeq
Noting that in \eqref{hat term 3 1}-\eqref{hat term 2 2} and \eqref{hat term 2 3}, we find upper bounds for $\hat Z_{d-1}\ind_{\hat B_{d-1}}$ and $\max_{\omega\in \hat B_{d-1}^c} \{\bar Z_{d-1}(\omega)\}$ using $1/(1-\hat Y_{d-1})$ which is also an upper bound for the smaller corresponding terms with $\bar Z_{d-1}$. Thus \eqref{hat term 3} and \eqref{hat term 2} give us the second and third term in \eqref{bar expectation decomposition} is also $o(d^{-2.4})$. Which implies there is a $C_1<\infty$ such that for sufficiently large even number $d$, 
$$
\left|G_{d-1}^{(i,j)}\right|\le C_1d^{-2.4}
$$
whenever $d(i,j)\ge 6$, and that 
$$
\mathcal{\hat E}^{(i,j)}_d\le C_1d^{-2.4}
$$
for all $0\le i,j\le d-1$. Combining the observation here with Lemma \ref{Lemma tail}, we have for sufficiently large $d$ any $i$
\beq
\label{equation sharp 5}
\begin{aligned}
\sum_{j=0}^{d-1} G_{d-1}(e_j-e_i)-1&=\sum_{j=0}^{d-1}\mathcal{\hat E}^{(i,j)}_d+ \sum_{j: d(i,j)\le 5} \sum_{p=1}^5 \left(\frac{1}{2\pi}\right)^{d-1}\left( \int_{[-\pi,\pi]^{d-1}}\cos(\theta_j-\theta_i)\hat\phi^p_{d-1}(\theta) d\theta\right)\\
&\le C_1 d^{-1.4}+ \sum_{j: d(i,j)\le 5} \sum_{p=1}^5 \left(\frac{1}{2\pi}\right)^{d-1}\left( \int_{[-\pi,\pi]^{d-1}}\cos(\theta_j-\theta_i)\hat\phi^p_{d-1}(\theta) d\theta\right). 
\end{aligned}
\eeq
Note that for any $n>0$, $|\{j: d(i,j)=n\}|\le 2$. So the second term in \eqref{equation sharp 5} is just a finite summation of no more than 55 terms. When $p=1$, if $d(i,j)=0$,
$$
\left(\frac{1}{2\pi}\right)^{d-1}\int_{[-\pi,\pi]^{d-1}}\cos(\theta_j-\theta_i)\hat\phi_{d-1}(\theta) d\theta=\left(\frac{1}{2\pi}\right)^{d-1}\int_{[-\pi,\pi]^{d-1}}\hat\phi_{d-1}(\theta) d\theta=0.
$$
And if $d(i,j)=1$,
$$
\left(\frac{1}{2\pi}\right)^{d-1}\int_{[-\pi,\pi]^{d-1}}\cos(\theta_j-\theta_i)\hat\phi_{d-1}(\theta) d\theta=\frac{1}{4\pi^2d}\int_{[-\pi,\pi]^2}\cos^2(\theta_j-\theta_i)d\theta_jd\theta_i=\frac{1}{2d}.
$$
For $d(i,j)\ge2$, by Lemma \ref{Lemma tail}
$$
\left(\frac{1}{2\pi}\right)^{d-1}\int_{[-\pi,\pi]^{d-1}}\cos(\theta_j-\theta_i)\hat\phi_{d-1}(\theta) d\theta=0. 
$$
And for $p\ge 2$ and any $i,j$
{\small \begin{align*}
\left|\left(\frac{1}{2\pi}\right)^{d-1}\int_{[-\pi,\pi]^{d-1}}\cos(\theta_j-\theta_i)\hat\phi^p_{d-1}(\theta) d\theta\right|&=\left|\left(\frac{1}{2\pi}\right)^{d-1}\int_{[-\pi,\pi]^{d-1}}\hat\phi^2_{d-1}(\theta) \cos(\theta_j-\theta_i)\hat\phi^{p-2}_{d-1}(\theta) d\theta\right|\\
&\le \left(\frac{1}{2\pi}\right)^{d-1}\int_{[-\pi,\pi]^{d-1}}\hat\phi^2_{d-1}(\theta)d\theta=\frac{1}{2d}. 
\end{align*}}
Thus we have shown that all terms in this finite summation is either 0 or $O(d^{-1})$. Take $C=28$ and the proof of Theorem \ref{Theorem sharp} is complete. \qed
\begin{remark}
\label{remark  tedious}
It is clear that the upper bound $C=28$ we find here is not precise since here we only want the right order and are actually having very weak upper bounds for those 55 terms in the summation. Actually, any $C>3/2$ will be a good upper bound for sufficiently large $d$. Among the 55 terms in the summation, one can easily see that the term $j=i$, $p=2$ and the two terms with $d(i,j)=1$, $p=1$ are the only ones $\sim d^{-1}$ and each of them is $1/(2d)+o(d^{-1})$. All the other terms are either 0 or $o(d^{-1})$. The calculation is trivial calculus but very tedious, especially for someone who is reading (or writing) this not too short paper.   
\end{remark}

\section{~}\label{sec:appa}
In this appendix we prove that the monotonicity fails when considering covering probability with repetitions.

 {\bf Proof of  Proposition \ref{Counterexample 1}:} To show the first part of Equation \eqref{probability 1} and \eqref{probability 2}, note that $\left\{{\rm Trace}(\mathcal{P})\otimes N_\mathcal{P}\subseteq \{X_n\}_{n=0}^\infty\right\}$  is a subset of event $\{\tau_A<\infty\}$. Thus by strong Markov property and symmetry of simple random walk 
\beq
\begin{aligned}
\label{probability 11}
&P\left({\rm Trace}(\mathcal{P})\otimes N_\mathcal{P}\subseteq \{X_n\}_{n=0}^\infty\right)\\
& \ \ =P_w(\tau_y<\infty, \tau_z<\infty)P(\tau_w=\tau_A)+P_z(\tau_y<\infty, \tau_w<\infty)P(\tau_z=\tau_A)\\
& \ \ \ +P_y(\tau_z<\infty, \tau_w<\infty)P(\tau_y=\tau_A)\\
&  \ \ =2P_o(\tau_y=\tau_A)[P_o(\tau_y<\tau_\omega)+P_o(\tau_\omega<\tau_y)]P_o(\tau_y<\infty)\\
& \ \ \ +2P_o(\tau_\omega=\tau_A)P_o(\tau_y<\tau_z)P_o(\tau_w<\infty).
\end{aligned}
\eeq
Similarly, note that $\left\{{\rm Trace}(\mathcal{P}')\otimes N_{\mathcal{P}'}\subseteq \{X_n\}_{n=0}^\infty\right\}$ is a subset of $\{\tau_{A_1}<\infty\}$, where $A_1=\ZZ^3\setminus\{y,w\}$
\beq
\label{probability 12}
\begin{aligned}
&P\left({\rm Trace}(\mathcal{P}')\otimes N_{\mathcal{P}'}\subseteq \{X_n\}_{n=0}^\infty\right)\\
& \ \ =P_y(\tau_y<\infty, \tau_w<\infty)P(\tau_y=\tau_{A_1})+P_w(\tau_{2,y}<\infty)P(\tau_w=\tau_{A_1})\\
&  \ \ =P_o(\tau_y<\tau_\omega)[P_o(\tau_o<\tau_y)+P_o(\tau_y<\tau_o)]P_o(\tau_y<\infty) \\
&  \ \ \ +P_o(\tau_w<\tau_y)P_o(\tau_y<\infty)P_o(\tau_0<\infty)
\end{aligned}
\eeq	
where $\tau_{2,y}$ is defined in \eqref{n stopping time}. To calculate the probability we have in  \eqref{probability 11} and \eqref{probability 12}, one may first note that 
$$
P_o(\tau_y<\infty)=\frac{G(y)}{G(o)},  \ P_o(\tau_w<\infty)=\frac{G(w)}{G(o)},
$$
where $G(\cdot)$ is the Green's function of 3-dimensional simple random walk. I.e., 
$$
G(x)=\frac{1}{(2\pi)^3}\int_{[-\pi,\pi]^3}\frac{1}{1-\phi(\theta)} e^{-i y\cdot \theta}d\theta
$$
with 
$$
\phi(\theta)=\frac{1}{3}[\cos(\theta_1)+\cos(\theta_2)+\cos(\theta_3)]. 
$$
Thus
\beq
\label{G(o)}
G(o)=\frac{1}{(2\pi)^3}\int_{[-\pi,\pi]^3}\frac{1}{1-\frac{1}{3}[\cos(\theta_1)+\cos(\theta_2)+\cos(\theta_3)]} d\theta\approx 1.5153,
\eeq
\beq
\label{G(y)}
G(y)=\frac{1}{(2\pi)^3}\int_{[-\pi,\pi]^3}\frac{\cos(\theta_1)}{1-\frac{1}{3}[\cos(\theta_1)+\cos(\theta_2)+\cos(\theta_3)]} d\theta \approx 0.5153,
\eeq
\beq
\label{G(2y)}
G(2y)=\frac{1}{(2\pi)^3}\int_{[-\pi,\pi]^3}\frac{\cos(2\theta_1)}{1-\frac{1}{3}[\cos(\theta_1)+\cos(\theta_2)+\cos(\theta_3)]} d\theta \approx 0.2563 ,
\eeq
\beq
\label{G(w)}
G(w)=\frac{1}{(2\pi)^3}\int_{[-\pi,\pi]^3}\frac{\cos(\theta_1+\theta_2)}{1-\frac{1}{3}[\cos(\theta_1)+\cos(\theta_2)+\cos(\theta_3)]} d\theta \approx 0.3301,
\eeq
\beq
\label{tau_y}
P_o(\tau_y<\infty)=\frac{\int_{[-\pi,\pi]^3}\frac{\cos(\theta_1)}{1-\frac{1}{3}[\cos(\theta_1)+\cos(\theta_2)+\cos(\theta_3)]} d\theta}{\int_{[-\pi,\pi]^3}\frac{1}{1-\frac{1}{3}[\cos(\theta_1)+\cos(\theta_2)+\cos(\theta_3)]} d\theta}\approx 0.3401,
\eeq
\beq
P_o(\tau_o<\infty)=P_o(\tau_y<\infty)\approx 0.3401,
\eeq
\beq
\label{tau_2y}
P_o(\tau_{2y}<\infty)=\frac{\int_{[-\pi,\pi]^3}\frac{\cos(2\theta_1)}{1-\frac{1}{3}[\cos(\theta_1)+\cos(\theta_2)+\cos(\theta_3)]} d\theta}{\int_{[-\pi,\pi]^3}\frac{1}{1-\frac{1}{3}[\cos(\theta_1)+\cos(\theta_2)+\cos(\theta_3)]} d\theta}\approx 0.1691,
\eeq
and
\beq
\label{tau_w}
P_o(\tau_w<\infty)=\frac{\int_{[-\pi,\pi]^3}\frac{\cos(\theta_1+\theta_2)}{1-\frac{1}{3}[\cos(\theta_1)+\cos(\theta_2)+\cos(\theta_3)]} d\theta}{\int_{[-\pi,\pi]^3}\frac{1}{1-\frac{1}{3}[\cos(\theta_1)+\cos(\theta_2)+\cos(\theta_3)]} d\theta}\approx 0.2178. 
\eeq
Then for $A_2=\ZZ^3\setminus \{z\}$, we have
$$
P_o(\tau_y<\tau_z)=P_o(\tau_y<\tau_{A_2})=\frac{G_{A_2}(o,y)}{G_{A_2}(y,y)}
$$
where $G_{A_2}(\cdot)$ is the Green's function for set $A_2$, see Section 4.6 of \cite{randomwalkbook} for reference. Then by Proposition 4.6.2 of \cite{randomwalkbook}, 
$$
G_{A_2}(o,y)=G(y)-P_o(\tau_y<\infty)G(w), \ G_{A_2}(y,y)=G(o)-P_o(\tau_w<\infty)G(w),
$$
which gives 
\beq
\label{tau_y<tau_z}
P_o(\tau_y<\tau_z)=P_o(\tau_z<\tau_y)=\frac{G(y)-P_o(\tau_y<\infty)G(w)}{G(o)-P_o(\tau_w<\infty)G(w)}\approx 0.2792. 
\eeq
Similarly, for $A_3=\ZZ^3\setminus \{y,z\}$ we have 
$$
P_o(\tau_w=\tau_A)=P_o(\tau_w<\tau_{A_3})=\frac{G_{A_3}(o,w)}{G_{A_3}(w,w)},
$$
where
$$
G_{A_3}(o,w)=G(w)-[P_o(\tau_y<\tau_z)+P_o(\tau_z<\tau_y)]G(y)
$$
and
$$
G_{A_3}(w,w)=G(o)-[P_o(\tau_y<\tau_z)+P_o(\tau_z<\tau_y)]G(y)
$$
which gives 
\beq
\label{tau_w=tau_A}
P_o(\tau_w=\tau_A)=\frac{G(w)-2P_o(\tau_y<\tau_z)G(y)}{G(o)-2P_o(\tau_y<\tau_z)G(y)}\approx0.0344. 
\eeq
Then for $A_4=\ZZ^3\setminus \{w\}$, 
$$
P_o(\tau_y<\tau_w)=P_o(\tau_y<\tau_{A_4})=\frac{G_{A_4}(o,y)}{G_{A_4}(y,y)}
$$
where
$$
G_{A_4}(o,y)=G(y)-P_o(\tau_w<\infty)G(y), \ G_{A_4}(y,y)=G(o)-P_o(\tau_y<\infty)G(y).
$$
Thus
\beq
\label{tau_y<tau_w}
P_o(\tau_y<\tau_w)=P_o(\tau_z<\tau_w)=\frac{G(y)-P_o(\tau_w<\infty)G(y)}{G(o)-P_o(\tau_y<\infty)G(y)}\approx 0.3008. 
\eeq
And for $A_5=\ZZ^3\setminus \{y\}$, 
$$
P_o(\tau_w<\tau_y)=P_o(\tau_w<\tau_{A_5})=\frac{G_{A_5}(o,w)}{G_{A_5}(w,w)},
$$
where
$$
G_{A_5}(o,w)=G(w)-P_o(\tau_y<\infty)G(y), \ G_{A_4}(w,w)=G(o)-P_o(\tau_y<\infty)G(y).
$$
Thus
\beq
\label{tau_w<tau_y}
P_o(\tau_w<\tau_y)=P_o(\tau_w<\tau_z)=\frac{G(w)-P_o(\tau_y<\infty)G(y)}{G(o)-P_o(\tau_y<\infty)G(y)}\approx 0.1155. 
\eeq
And for $A_6=\ZZ^3\setminus \{z,w\}$,
$$
P_o(\tau_y=\tau_A)=P_o(\tau_y<\tau_{A_6})=\frac{G_{A_6}(o,y)}{G_{A_6}(y,y)},
$$
where
$$
G_{A_6}(o,y)=G(y)-P_o(\tau_w<\tau_y)G(y)-P_o(\tau_y<\tau_w)G(w)
$$
and
$$
G_{A_6}(y,y)=G(o)-P_o(\tau_w<\tau_y)G(w)-P_o(\tau_y<\tau_w)G(y).
$$
Thus we have
\beq
\label{tau_y=tau_A}
P_o(\tau_y=\tau_A)=\frac{G(y)-P_o(\tau_w<\tau_y)G(y)-P_o(\tau_y<\tau_w)G(w)}{G(o)-P_o(\tau_w<\tau_y)G(w)-P_o(\tau_y<\tau_w)G(y)}\approx 0.2696
\eeq
which by symmetry also equals to $P_o(\tau_z=\tau_A)$. Finally for $P_o(\tau_o<\tau_y)$ and $P_o(\tau_o<\tau_y)$, using one step argument at time 0,
$$
P_o(\tau_o<\tau_y)=\frac{2}{3}P_o(\tau_y<\tau_w)+\frac{1}{6} P_o(\tau_y<\tau_{2y})
$$
and
$$
P_o(\tau_y<\tau_o)=\frac{1}{6}+\frac{2}{3}P_o(\tau_w<\tau_y)+\frac{1}{6} P_o(\tau_{2y}<\tau_{y}).
$$
So again for $A_7=\ZZ^3\setminus \{2y\}$, we have
$$
P_o(\tau_y<\tau_{2y})=\frac{G_{A_7}(o,y)}{G_{A_7}(y,y)},
$$
where
$$
G_{A_7}(o,y)=G(y)-P_o(\tau_{2y}<\infty)G(y)
$$
and 
$$
G_{A_7}(y,y)=G(o)-P_o(\tau_{y}<\infty)G(y).
$$
Thus
\beq
\label{tau_o<tau_y}
P_o(\tau_o<\tau_y)=\frac{2}{3}P_o(\tau_y<\tau_w)+\frac{1}{6}\frac{G(y)-P_o(\tau_{2y}<\infty)G(y)}{G(o)-P_o(\tau_{y}<\infty)G(y)}\approx 0.2538. 
\eeq 
And for $P_o(\tau_{2y}<\tau_{y})$, recalling that $A_5=\ZZ^3\setminus \{y\}$ we have
$$
P_o(\tau_{2y}<\tau_{y})=\frac{G_{A_5}(o,2y)}{G_{A_5}(2y,2y)},
$$
where
$$
G_{A_5}(o,2y)=G(2y)-P_o(\tau_y<\infty)G(y), \ G_{A_5}(2y,2y)=G(o)-P_o(\tau_y<\infty)G(y).
$$
Thus 
\beq
\label{tau_y<tau_o}
P_o(\tau_y<\tau_o)=\frac{1}{6}+\frac{2}{3}P_o(\tau_w<\tau_y)+\frac{1}{6} \frac{G(2y)-P_o(\tau_y<\infty)G(y)}{G(o)-P_o(\tau_y<\infty)G(y)}\approx 0.2538.
\eeq 
At this point, we finally have all the variables needed calculated, apply (\ref{G(o)}-\ref{tau_y<tau_o}) to \eqref{probability 11} and \eqref{probability 12}, the proof of  Proposition \ref{Counterexample 1} is complete.

\end{document}